\DeclareMathAlphabet{\mathpzc}{OT1}{pzc}{m}{it} 
\newcommand\mathscr[1]{\scalebox{1.1}{$\mathpzc{#1}$}}
\definecolor{darkblue}{rgb}{0.05,0.25,0.65}
\definecolor{greenii}{RGB}{20,140,10}
\definecolor{lightgray}{rgb}{0.9,0.9,0.9}
\definecolor{orangeii}{RGB}{200,100,5}
\newcommand\makebig[2]{%
  \@xp\newcommand\@xp*\csname#1\endcsname{\bBigg@{#2}}%
  \@xp\newcommand\@xp*\csname#1l\endcsname{\@xp\mathopen\csname#1\endcsname}%
  \@xp\newcommand\@xp*\csname#1r\endcsname{\@xp\mathclose\csname#1\endcsname}%
}
\def\badquotient{\rotatebox[origin=c]{70}{$<$}}
\def\nonsingular{\rotatebox[origin=c]{70}{$\subset$}}
\def\orbifold{\rotatebox[origin=c]{70}{$\prec$}}
\newcommand{\orbishape}{\badquotient}
\newcommand{\orbiflat}{\nonsingular}
\newcommand{\orbisharp}{\orbifold}
\DeclareMathOperator{\supersharp}{\mathrm{Rh}}
\newcommand{\simplicial}{
  \Delta
}
\newcommand{\shape}{
  \mathchoice{\raisebox{1pt}{\rm\normalfont\textesh}}
             {\raisebox{1pt}{\rm\normalfont\textesh}}
             {\raisebox{0.5pt}{\rm\normalfont\scriptsize\textesh}}
             {\raisebox{0.2pt}{\rm\normalfont\tiny\textesh}}
}
\newcommand{\Localization}[1]{
  \mathrm{Loc}^{\scalebox{.6}{$#1$}}
}
\newcommand{\SimplicialLocalization}[1]{
  \Localization{#1}_{\simplicial}
}
\newcommand{\TwoLocalizationProjection}[1]{
  \SimplicialLocalization{\WeakEquivalences}
}
\newcommand{\WeakEquivalences}{
  \mathrm{WEqs}
}
\newcommand{\Sheaves}{
  \mathrm{Sh}
}
\newcommand{\InfinitySheaves}{
  \Sheaves_{\infty}
}
\newif\if@sup
\newtoks\@sups
\def\append@sup#1{\edef\act{\noexpand\@sups={\the\@sups #1}}\act}%
\def\reset@sup{\@supfalse\@sups={}}%
\def\mk@scripts#1#2{\if #2/ \if@sup ^{\the\@sups}\fi \else%
  \ifx #1_ \if@sup ^{\the\@sups}\reset@sup \fi {}_{#2}%
  \else \append@sup#2 \@suptrue \fi%
  \expandafter\mk@scripts\fi}
\def\tensor#1#2{\reset@sup#1\mk@scripts#2_/}
\def\multiscripts#1#2#3{\reset@sup{}\mk@scripts#1_/#2%
  \reset@sup\mk@scripts#3_/}
\newbox\slashbox \setbox\slashbox=\hbox{$/$}
\def\itex@pslash#1{\setbox\@tempboxa=\hbox{$#1$}
  \@tempdima=0.5\wd\slashbox \advance\@tempdima 0.5\wd\@tempboxa
  \copy\slashbox \kern-\@tempdima \box\@tempboxa}
\def\slash{\protect\itex@pslash}
\def\clap#1{\hbox to 0pt{\hss#1\hss}}
\def\mathclap{\mathpalette\mathclapinternal}
\def\mathclapinternal#1#2{\clap{$\mathsurround=0pt#1{#2}$}}
\renewcommand{\emph}{\textit}
\newcommand*{\Ca}{\mathcal{C}}
\newcommand*{\Ea}{\mathcal{E}}
\newcommand*{\Fa}{\mathcal{F}}
\newcommand*{\Pa}{\mathcal{P}}
\newcommand*{\Sa}{\mathcal{S}}
\newcommand*{\Nb}{\mathbb{N}}
\newcommand*{\Rb}{\mathbb{R}}
\newcommand*{\Sb}{\mathbb{S}}
\newcommand*{\Zb}{\mathbb{Z}}
\newcommand*{\im}{\mathsf{im}}
\newcommand*{\op}{^\text{op}}
\newcommand*{\inv}{^{-1}}
\newcommand*{\id}{\mathsf{id}}
\DeclareMathOperator{\colim}{\mathsf{colim}}
\newcommand*{\Prop}{\mathbf{Prop}}
\newcommand*{\Type}{\mathbf{Type}}
\DeclareMathOperator{\fib}{\mathsf{fib}}
\DeclareMathOperator{\fst}{\term{fst}}
\DeclareMathOperator{\snd}{\term{snd}}
\newcommand*{\Set}{\mathbf{Set}}
\newcommand*{\Ho}{\mathcal{S}} 
\newcommand{\ShOn}{
  \InfinitySheaves
}
\newcommand{\dprod}[1]{({#1}) \to }           
\newcommand{\dsum}[1]{({#1}) \times }       
\newcommand{\type}[1]{\mathsf{{#1}}}
\newcommand{\term}[1]{\mathsf{{#1}}}
\newcommand{\trunc}[1]{\left\lVert#1\right\rVert}       
\newcommand{\B}{\mathbf{B}}
\newcommand{\pt}{\term{pt}}
\newcommand{\ord}[1]{\underline{\type{#1}}}
\DeclareMathOperator{\csk}{\type{csk}_0}
\DeclareMathOperator{\sk}{\type{sk}_0}
\DeclareMathOperator{\geomreal}{\type{re}}
\newcommand{\xto}[1]{\xrightarrow{#1}}
\newcommand{\pto}{\,\cdot\kern-.1em{\to}\,}
\providecommand*{\xmapstofill@}{%
  \arrowfill@{\mapstochar\relbar}\relbar\rightarrow
}
\providecommand*{\xmapsto}[2][]{%
  \ext@arrow 0395\xmapstofill@{#1}{#2}%
}
\def\slashedarrowfill@#1#2#3#4#5{%
  $\m@th\thickmuskip0mu\medmuskip\thickmuskip\thinmuskip\thickmuskip
   \relax#5#1\mkern-7mu%
   \cleaders\hbox{$#5\mkern-2mu#2\mkern-2mu$}\hfill
   \mathclap{#3}\mathclap{#2}%
   \cleaders\hbox{$#5\mkern-2mu#2\mkern-2mu$}\hfill
   \mkern-7mu#4$%
}
\def\rightslashedarrowfill@{%
  \slashedarrowfill@\relbar\relbar\mapstochar\rightarrow}
\newcommand\xslashedrightarrow[2][]{%
  \ext@arrow 0055{\rightslashedarrowfill@}{#1}{#2}}
\tikzset{
    vert/.style={anchor=south, rotate=90, inner sep=.5mm}
} 
\newcommand{\rulen}[1]{\textsc{#1}}
\newcommand{\yields}{\vdash}
\newcommand{\judge}{\!\!\!\mathcal{J}}
\newcommand{\ctx}{\,\,\mathsf{ctx}}
\newcommand{\jtype}{\,\,\mathsf{type}}
\newcommand{\ind}[3]{\mathsf{let} \; {#2} := {#1} \, \mathsf{in} \, {#3}}
\let\oldequiv\equiv%
\renewcommand{\equiv}{\simeq}
\newcommand{\defeq}{\oldequiv}
\DeclareSymbolFont{extraup}{U}{zavm}{m}{n}
\DeclareMathSymbol{\arevfilledspade}{\mathalpha}{extraup}{81}
\DeclareMathSymbol{\arevopenheart}{\mathalpha}{extraup}{82}
\DeclareMathSymbol{\arevopendiamond}{\mathalpha}{extraup}{83}
\DeclareMathSymbol{\arevfilledclub}{\mathalpha}{extraup}{84}
\DeclareMathSymbol{\arevopenspade}{\mathalpha}{extraup}{85}
\DeclareMathSymbol{\arevfilledheart}{\mathalpha}{extraup}{86}
\DeclareMathSymbol{\arevfilleddiamond}{\mathalpha}{extraup}{87}
\DeclareMathSymbol{\arevopenclub}{\mathalpha}{extraup}{88}
\definecolor{darkeryellow}{rgb}{1.0, 0.77, 0.05}
\newcommand{\focusAcolor}[1]{\textcolor{red!50!white}{#1}}
\newcommand{\focusBcolor}[1]{\textcolor{blue!60!white}{#1}}
\newcommand{\focusCcolor}[1]{\textcolor{darkeryellow}{#1}}
\newcommand{\focusA}{\focusAcolor{\arevfilledheart}}
\newcommand{\focusB}{\focusBcolor{\arevfilledclub}}
\newcommand{\focusC}{\focusCcolor{\arevfilledspade}}
\newcommand{\Afam}{\textcolor{red!65!white}{G}}
\newcommand{\Bfam}{\textcolor{blue!75!white}{H}}
\newcommand{\lock}{\text{\faLock}}
\newtheorem{thm}{Theorem}[subsection]
\theoremstyle{definition}
\newtheorem{defn}[thm]{Definition}
\newtheorem{axiom}{Axiom}
\newtheorem{rmk}[thm]{Remark}
\newtheorem*{acknowledgements}{Acknowledgements}
\newtheorem{lem}[thm]{Lemma}
\newtheorem{cor}[thm]{Corollary}
\newtheorem{prop}[thm]{Proposition}
\title{Commuting Cohesions}
\author{David Jaz Myers \and Mitchell Riley}
\begin{document}

\maketitle

\begin{abstract}
  Shulman's spatial type theory internalizes the modalities of Lawvere's axiomatic cohesion in a homotopy type theory, enabling many of the constructions from Schreiber's modal approach to differential cohomology to be carried out synthetically. In spatial type theory, every type carries a spatial cohesion among its points and every function is continuous with respect to this. But in mathematical practice, objects may be spatial in more than one way at the same time; a simplicial space has both topological and simplicial structures. Moreover, many of the constructions of Schreiber's differential cohomology and Schreiber and Sati's account of proper equivariant orbifold cohomology require the interplay of multiple sorts of spatiality --- differential, equivariant, and simplicial.

  In this paper, we put forward a type theory with ``commuting focuses'' which allows for types to carry multiple kinds of spatial structure. The theory is a relatively painless extension of spatial type theory, and enables us to give a synthetic account of simplicial, differential, equivariant, and other cohesions carried by the same types. We demonstrate the theory by showing that the homotopy type of any differential stack may be computed from a discrete simplicial set derived from the {\v C}ech nerve of any good cover. We also give other examples of multiple cohesions, such as differential equivariant types and supergeometric types, laying the groundwork for a synthetic account of Schreiber and Sati's proper orbifold cohomology.
\end{abstract}

\tableofcontents

\section{Introduction}

Homotopy type theory is a novel foundation of mathematics which centers the notion of identification of mathematical objects. In homotopy type theory, every mathematical object is of a certain \emph{type} of mathematical object; and, if $x$ and $y$ are both objects of type $X$, then we know by virtue of the definition of the type $X$ what it means to identify $x$ with $y$ as elements of the type $X$. For example, if $x$ and $y$ were real vector spaces (so that $X$ was the type of real vector spaces), then to identify $x$ with $y$ would be to give a $\Rb$-linear isomorphism between them. If $x$ and $y$ were smooth manifolds, then to identify them would be to give a diffeomorphism between them. If $x$ and $y$ were mere numbers, then to identify them would be simply to prove them equal. And so on, for any type of mathematical object.

Homotopy theory, in the abstract, is the study of the identifications of mathematical objects. Homotopy type theory is well suited for synthetic homotopy theory (e.g. \cite{brunerie:thesis, hfll:blakers-massey, bvr:higher-groups-in-hott, cors:localization-in-hott} and many others), but to apply these theorems in algebraic topology --- where objects are identified by giving continuous deformations of one into the other --- requires a modification to the theory. To emphasize the difference here, compare the higher inductive circle $S^{1}$, which is the type freely generated by a point with a self-identification, with the topological circle $\Sb^{1}$ defined as the set of points in the real plane with unit distance from the origin:
\[
\Sb^{{1}} \defeq \{(x, y) : \Rb^{2} \mid x^{2} + y^{2} = 1\}.
\]
The base point of the higher inductive circle $S^{1}$ has many non-trivial self-identifications, whereas two points of the topological circle may be identified (in a unique way) just when they are equal. The two types are closely related however: the higher inductive circle $S^{1}$ is the \emph{homotopy type} of the topological circle $\Sb^{1}$ obtained by identifying the points of the latter by continuous deformation. Ordinary homotopy type theory does not have the language to express this relationship, and therefore cannot apply the synthetic theorems concerning the higher inductive circle to topological questions about the topological circle.

What is needed is a way to distinguish between types which carry topological structure and discrete types with only homotopical structure. In his \emph{Cantor's `Lauter Einsen' and Cohesive Toposes} \cite{lawvere:cantor}, Lawvere points out that this distinction between natively cohesive and discrete sets is already present in the writings of Cantor as the distinction between the \emph{Menge} of mathematical practice and the abstract \emph{Kardinalzahlen} which arise by abstracting away from the relationships among the points of a space. In the paper, and his subsequent \emph{Axiomatic Cohesion} \cite{lawvere:cohesion}, Lawvere formalizes this opposition between cohesion and discreteness as an adjoint triple between toposes:
\[
\begin{tikzcd}
\mbox{Mengen} \ar[d, "points" description] \ar[d, hookleftarrow, shift left = 5,
bend left, "codiscrete"] \ar[d, hookleftarrow, shift right = 5, bend right,
"discrete"']\\
\mbox{Kardinalen}
\end{tikzcd}
\]

This adjoint triple induces an adjoint pair of idempotent (co)monads on the topos of spaces or \emph{Mengen}: the left adjoint, $\flat$, retopologizes a space with the discrete topology, and the right adjoint, $\sharp$, retopologizes it with the codiscrete topology. Lawvere notes that in many cases --- when the spaces in question are ``locally connected'' --- there will be a fourth adjoint $\pi_{0}$ on the left which produces the discrete set of connected components of a space; this system of adjoint functors characterizes his axiomatic cohesion.

But the real power of Lawvere's axiomatic cohesion is unlocked by Schreiber's move from
$1$-toposes whose objects are cohesive sets to $\infty$-toposes whose objects
are cohesive \emph{homotopy types}. In his \emph{Differential Cohomology in
  a Cohesive $\infty$-Topos} (DCCT) \cite{urs:diff-coh},
Schreiber shows that Lawvere's axiomatics, when interpreted in $\infty$-toposes,
give rise to the hexagonal fracture diagrams which characterize differential
cohomology --- alongside many other observations about the centrality of the
defining adjoints of cohesion in higher topology and physics. What was the
functor $\pi_0$ that took the connected components of a space becomes, in the
$\infty$-categorical setting, the functor $\Pi_{\infty}$ which takes the
\emph{shape} (in the sense of Lurie \cite[\S 7.1.6]{lurie:htt}) of a stack. All
in all, a cohesive $\infty$-logos has three adjoint endofunctors
\[
\shape \dashv \flat \dashv \sharp
\]
where $\shape$ takes the \emph{shape} or homotopy type of a higher space
considered as a discrete space,
$\flat$ takes its underlying homotopy type of discrete points, and $\sharp$
takes the underlying homotopy type of points but retopologized codiscretely.

In \emph{Brouwer's Fixed Point Theorem in Real-Cohesive Homotopy Type Theory}
\cite{mike:real-cohesive-hott} (henceforth \emph{Real Cohesion}), Shulman brings this distinction between cohesive \emph{Mengen} and discrete \emph{Karndinalen} to homotopy type theory via his \emph{spatial type theory}. Spatial type theory internalizes the $\flat$ and $\sharp$ modalities from Schreiber's DCCT which relate discrete (but homotopically interesting) types like $S^{1}$ and spatial types like $\Sb^{1}$. Spatial type theory also improves upon a previous axiomatization of these modalities in HoTT due to Schreiber and Shulman \cite{schreiber-shulman:cohesive}, by replacing axioms with judgemental rules. \emph{Cohesive homotopy type theory} is spatial type theory with an additional axiom that implies the local contractibility of the sorts of spaces in question; from this axiom the further left adjoint $\shape$ to $\flat$ may be defined.

Homotopy type theory may be interpreted into any $\infty$-topos \cite{kapulkin-lumsdaine:simplicial,mike:all}, so that a type in homotopy type theory becomes a sheaf of homotopy types externally. In particular, if we interpret the topological circle $\Sb^{1}$ defined as a subset of $\Rb^{2}$ into the $\infty$-topos of sheaves on the site of continuous manifolds, it becomes the sheaf (of sets) represented by the external continuous manifold $\Sb^{1}$, while the higher inductive circle $S^{1}$ gets interpreted as the constant sheaf at the homotopy type of the circle. By the Yoneda lemma, then, any function definable on $\Sb^{1}$ is necessarily continuous. Since functions $f : X \to Y$ in HoTT are defined simply by specifying an element $f(x) : Y$ in the context of a free variable $x : X$, variation in a free variable confers a liminal sort of continuity: such an expression could be interpreted in a spatial $\infty$-topos in which case it necessarily defines a continuous function.

Shulman's spatial type theory works by introducing the notion of a \emph{crisp free variable} to get around this liminal continuity. An expression in spatial type theory depends on its crisp free variables \emph{discontinuously}. The modalities $\flat$ and $\sharp$ of spatial type theory represent crisp variables universally on the left and right respectively. In this way, $\flat X$ is the discrete retopologization of the spatial type $X$, while $\sharp X$ is its codiscrete retopologization --- a map out of $\flat X$ is a discontinuous map out of $X$, while a map into $\sharp X$ is a discontinuous map into $X$.

Spatial type theory is intended to be interpreted into \emph{local} geometric morphisms $\gamma : \Ea \to \Sa$ of $\infty$-toposes, those for which $\gamma_{\ast}$ has a fully faithful right adjoint $\gamma^{!}$ which gives a geometric morphism $f : \Sa \to \Ea$ (with $f_{\ast} := \gamma^{!}$) adjoint to $\gamma$ which acts as the \emph{focal point} of $\Ea$ as a space over $\Sa$. The adjoint modalities $\flat$ and $\sharp$ are interpreted as the adjoint idempotent (co)monad pair $\gamma^{\ast}\gamma_{\ast}$ and $\gamma^{!}\gamma_{\ast}$ respectively. A crisp free variable is then one which varies over an object of the focal point $\Sa$: a free variable is crisp when it is \emph{in focus}.

There is not only one way for mathematical objects to be spatial. Spaces may cohere with smooth, analytic, algebraic, condensed, and simplicial or cubical combinatorial structures --- and more. Each of these cases would give rise to a particular spatial type theory as the internal language of an appropriate local $\infty$-topos. But there are many cases arising in practice where we need not just one axis of spatiality, but many at once. For example, it is a classical theorem that the homotopy type of a manifold may be computed as the realization of a (topologically discrete) simplicial set associated to the {\v C}ech nerve of a good open cover of the manifold. This theorem relates a simplicial set to a continuous space, via an intermediary simplicial space which is both continuous and simplicial at the same time --- the {\v C}ech nerve of the cover. But in spatial homotopy type theory there is only one notion of crisp variable, and therefore just one sort of spatiality.

For simplicial types, the discrete reflection is the $0$-skeleton $\sk$, while the codiscrete reflection is the $0$-coskeleton. For simplicial spaces, we then have both the (topologically) discrete $\flat$ and codiscrete $\sharp$, as well as the simplicially $0$-skeletal $\sk$ and $0$-coskeletal $\csk$. Interestingly, the {\v C}ech nerve itself arises from these modalities: the {\v C}ech nerve of a map $f : X\to Y$ between $0$-skeletal types (that is, continuous or differential stacks with no simplicial structure) is its $\csk$-image, as we will see later in \cref{prop:cech.nerve}. A simplicial space has both a shape $\shape X$ and a realization (or colimit) $\geomreal X$; the first is a topologically discrete simplicial type, while the latter is a $0$-skeletal but spatial type. With all these modalities, we can prove the theorem about good covers described above as \cref{thm:good.cover.homotopy}.

Another use case for multiple axes of spatiality is Sati and Schreiber's \emph{Proper orbifold cohomology} \cite{sati-schreiber:proper-orbifold-cohomology}, where orbifolds are understood both as having both differential structure (as differential stacks) and global equivariant structure (concerning their singularities). In order to get the correct generalized cohomology of orbifolds without relying on ad-hoc constructions based on a global quotient presentation of the orbifold, Sati and Schreiber work with the $\infty$-topos of global equivariant differential stacks, which is local both over the global equivariant topos and the topos of differential stacks. Here the differential modalities $\shape$, $\flat$ and $\sharp$ are augmented with the modalities of equivariant cohesion \cite{rezk:global-cohesion}: $\orbishape$, $\orbiflat$, and $\orbisharp$, which take the strict quotient, the underlying space as an invariant type, and the Yoneda embedding of the underlying space of a global equivariant type respectively. Again the modalities play a central role in the theory, with the ordinary Borel cohomology of a global quotient orbifold $X \sslash G$ being the ordinary cohomology of $\shape \orbiflat (X \sslash G)$, while the proper equivariant Bredon cohomology of $X \sslash G$ is the cohomology of $\orbisharp(X \sslash G)$, twisted by the map to $\orbisharp \B G$ classifying the quotient map $\orbisharp X \to \orbisharp(X \sslash G)$.

In these cases, modalities that lie in the same position in their adjoint chain commute with each other, so, for example, $\flat$ commutes with $\sk$ and $\sharp$ commutes with $\csk$. However, there are cases where these modalities are nested, with one spatiality being a refinement of another. This occurs for example in supergeometry as formulated by Schreiber in \cite{urs:diff-coh} with the modalities of \emph{solid cohesion}. The supergeometric focus is given by the even comodality $\rightrightarrows$ (which takes the even part of a superspace) and the rheonomic modality $\supersharp$ which is given by localizing at the \emph{odd line} $\Rb^{0\mid 1}$.

In this paper, we put forward a modification of spatial type theory to allow for
multiple axes of spatiality. Our theory works by allowing for a meet
semi-lattice of \emph{focuses} $\focusA, \focusB,\dots$, each with a separate
notion of $\focusA$-crisp variable and pair of adjoint (co)modalities
$\flat_{\focusA}$ and $\sharp_{\focusA}$. Like spatial type theory, our custom
type theory gets us to the coalface of synthetic homotopy theory very
efficiently while staying simple enough to be used in an informal style.

The presence of multiple notions of crispness forces a more complex context
structure than spatial type theory's separation of the context into a crisp zone
and cohesive zone. Similar to many other modal type theories \cite{lsr:multi,
  mtt, mtt-fitch, atkey:qtt, nvd:parametric-quantifiers}, we annotate each
variable with modal information, here, the focuses for which that variable is
crisp. The typing rules for the modalities of each focus then work essentially
independently. The exception is $\flat$-elimination, which is upgraded to allow
the crispness of the term being eliminated to be maintained in the variable
bound by the induction (a `crisp' induction principle).

Ours is far from the only extension of type theory with multiple modalities, but
as we discuss in more detail later, no existing theory has the combination of
features that we are looking for: dependent types (ruling out \cite{lsr:multi})
that may depend on modal variables (ruling out \cite{atkey:qtt}), multiple
commuting comodalities (ruling out \cite{mike:real-cohesive-hott, drats,
  nvd:parametric-quantifiers}) each with a with right-adjoint modality (ruling
out \cite{grtt}) and no further left-adjoints (ruling out \cite{mtt, mtt-fitch}
and \cite[\S 14]{cavallo:thesis}).

In addition to allowing us to formalize the theorem about {\v C}ech nerves of open covers as \cref{thm:good.cover.homotopy}, our type theory will be able to handle the equivariant differential cohesion used by Sati and Schreiber in their \emph{Proper orbifold cohomology} \cite{sati-schreiber:proper-orbifold-cohomology}, as well as the nested focuses of Schreiber's supergeometric \emph{solid} cohesion \cite{urs:diff-coh}. This extends the work
of Cherubini \cite{Cherubini:Thesis} and the first author
\cite{jaz:good-fibrations,jaz:modal-fracture,jaz:orbifolds} of
giving synthetic accounts of the constructions of Schreiber
\cite{urs:diff-coh} and Sati-Schreiber
\cite{sati-schreiber:proper-orbifold-cohomology}.

Positing an additional focus does not disturb arguments made using existing focuses, so we also expect our theory to be helpful when dipping into simplicial arguments in the course of other reasoning by adding a simplicial focus and making use of the new modalities. The problem of defining simplicial types in ordinary Book HoTT remains open, and there are now a number of different approaches to constructing simplicial types which each use some extension to the underlying type theory. In this paper, we will axiomatize the $1$-simplex $\Delta[1]$ as a linear order with distinct top and bottom and use the cohesive modalities to define the {\v C}ech nerve of a map and the realization or colimit of a simplicial type. We believe our approach here would pair nicely with other approaches to simplicial types for the purposes of synthetic $(\infty,1)$-category theory such as \cite{rs:synthetic-cats,buccholtz-weinberger:fibered.cats,weinberger:cartesian.fibrations,weinberger:internal.sums}, where the $\sk$ modality would take the core of a Rezk type.\footnote{Though we have not looked in detail at how the focuses would work with the Riehl-Shulman simplicial type theory, and in particular how they would interact with the cubes/topes zones of the Riehl-Shulman context.}

\paragraph{Outline of the present paper.} After presenting our type theory in
\cref{sec:rules}, we will look at
ways to specialize the spatiality of
a focus in \cref{sec:specializing}.
In particular, we will observe that
in many cases there is a small class
of test spaces $G_i$ so that
codiscreteness (that is, being
$\sharp$-modal) is detected by
uniquely lifting against the
$\flat$-counits $\flat G_i \to G_i$; such $G_i$ will be said to \emph{detect continuity}.
Externally, the $G_i$ could be any
family which generates the logos
under colimits. In practice, the
$G_i$ will be test spaces which
minimally carry the appropriate
spatiality: in the simplicial case,
the simplices $\Delta[n]$; in the
real-cohesive case, the Euclidean
spaces $\Rb^n$; for condensed sets,
the profinite sets, etc.

In \cref{sec:specializing}, we will
also meet a family of axioms which
hold for spatialities that are
\emph{locally contractible}. For
example, continuous manifolds which
are built from Euclidean spaces by
colimits are locally contractible,
while condensed sets which are built
from profinite sets by colimits need
not be locally contractible. In
general, a space is locally
contractible when it has a constant
\emph{shape} in the sense of Lurie
\cite[\S 7.1.6]{lurie:htt}.

We may define a space $C$ to be
contractible when any map $C \to
S$ to a discrete space $S$ is constant. If the converse
holds --- a space $S$ is discrete
($\flat$-modal) if every map $C \to
S$ is constant --- then we say that
$C$ \emph{detects the connectivity}
of spaces. For example, $\Rb$
detects the connectivity of
continuous $\infty$-groupoids, and
$\Delta[1]$ detects the connectivity
of simplicial $\infty$-groupoids.
If there is a space (or family of
spaces) which detects connectivity,
then the local geometric morphism
$p$ corresponding to the morphism is
furthermore strongly locally contractible in
that $p^{\ast}$ has a left adjoint
$p_{!}$ which takes the (constant
value of the) shape of a space. In
the case that $p$ is both local and
strongly locally contractible, we
say that $p$ is \emph{cohesive}
following Lawvere
\cite{lawvere:cohesion},
Schreiber
\cite{urs:diff-coh},
and Shulman
\cite{mike:real-cohesive-hott}.
Nullifying at the family of spaces
which detect connectivity gives a
modality $\shape$ which is left
adjoint to $\flat$; it may be
thought of as taking the homotopy
type of a space.

In \cref{sec:single.examples}
we will give example axioms for
specializing single focuses. We will
review Shulman's axioms for
\emph{real cohesion}, where the
Euclidean spaces $\Rb^n$ detect continuity
and connectivity. We will then see
simplicial cohesion in some detail,
where the simplices $\Delta[n]$
detect continuity and connectivity.
We give our types simplicial
structure by axiomatizing the
$1$-simplex $\Delta[1]$ as a
total order with distinct top and bottom
elements, following Joyal's
characterization of simplicial sets
as the classifying topos for such
orders \cite{wraith:generic-interval}. We use the $\csk$ modality
to construct {\v C}ech nerves of
maps.
Then we will describe the global
equivariant cohesion first observed
by Rezk
\cite{rezk:global-cohesion} and used by Sati and Schreiber in \cite{sati-schreiber:proper-orbifold-cohomology}.
Finally, we will briefly describe
axioms for topological focuses such
as Johnstone's topological topos of
sequential spaces \cite{johnstone:topological-topos} and the
condensed/pyknotic topos of
Clausen-Scholze \cite{clausen-sholze:condensed}
and Barwick-Haine \cite{barwick-haine:pyknotic}.

After surveying some of the
different sorts of spatiality which
types might carry, we turn our
attention to multiple focuses in
\cref{sec:multiple.focuses}. In
\cref{defn:orthogonal.cohesions}, we
define what it means for two
cohesions to be \emph{orthogonal}:
when the family which detects the
connectivity of one is discrete with
respect to the other, and vice
versa. We then prove a few lemmas
concerning orthogonal cohesions, in particular concerning when it is possible to
commute the various modalities past each other.

Finally, we give examples of
multiple focuses
in \cref{sec:multiple.examples}. We
begin with simplicial real cohesion,
which has both a simplicial focus
and a real-cohesive focus which are
orthogonal. We prove, in
\cref{thm:good.cover.homotopy}, that
the shape of any $0$-skeletal type
$M$ may be computed as the
realization of a topologically discrete simplicial type
constructed from the {\v C}ech nerve
of any \emph{good} cover $U$ of $M$ ---
one for which finite intersections
of the $U_i$ are contractible in the
sense of being $\shape$-connected.

Next, we combine equivariant cohesion with differential cohesion to give the
series of modalities used in Sati and Schreiber's \emph{Proper orbifold
  cohomology} \cite{sati-schreiber:proper-orbifold-cohomology}. Happily, no extra
axioms are needed to show that the two cohesions are orthogonal; we prove this
in \cref{lem:equivariant.differential.orthogonal}.

Finally, we describe the supergeometric
or ``solid'' cohesion of Schreiber's
\emph{Differential Cohomology in a
  Cohesive $\infty$-topos}. This
extends real cohesion with the
\emph{odd line} $\Rb^{0\mid 1}$,
where the ``discrete'' comodality of
the supergeometric focus takes the
even part of a supergeometric space,
and the ``codiscrete'' modality
takes a \emph{rheonomic} reflection
of the space, one whose super
structure is uniquely determined by
its even structure. Unlike our other
examples where the focuses involved
are orthogonal, here the
differential focus is included in
the supergeometric focus: any
discrete space is also purely even,
as is any codiscrete space.

\begin{acknowledgements}
We would like to thank Urs Schreiber for his careful reading and extensive
comments during the drafting process of the paper. And we would like to thank
Hisham Sati for his feedback and words of encouragement. The authors are
grateful for the support of Tamkeen under the NYU Abu Dhabi Research Institute grant CG008.
\end{acknowledgements}

\section{A Type Theory with Commuting Focuses}\label{sec:rules}

The fundamental duality in higher
topos theory is between the
$\infty$-topos --- a general sort of
space --- and the $\infty$-logos ---
the category of sheaves of homotopy
types on such a space~\cite{anel-joyal:topo-logie}. This duality
is perfect: a map of
$\infty$-toposes $\Ea \to \Fa$ is defined to be a
lex accessible functor $\ShOn(\Fa) \to \ShOn(\Ea)$ between their corresponding
$\infty$-logoses in the opposite
direction.

This duality between toposes and
logoses gives a nice perspective on
the distinction between the
\emph{petite} toposes, which are
used as generalized spaces in
practice, and the \emph{gros}
toposes --- or rather, their dual
logoses --- which are used as
categories \emph{of} spaces, rather
than as spaces in their own right.
Quite opposite to their names, the
petite toposes are ``big'' spaces,
while the gros toposes are ``small''
spaces; it is their dual logoses
which are correctly described by the
adjectives ``petite'' and ``gros''.
Since the logos is the category of
sheaves on the topos, or
equivalently the category of \'etale
maps into the topos, the ``larger''
the topos the more constraining the
\'etale condition becomes. For that
reason, the gros toposes have
qualitatively ``smaller'' categories
of sheaves. On the other hand, the
more general the \'etale spaces may
be, the ``smaller'' the base topos
must be. In general, the ``biggest''
logoses, the logoses \emph{of}
spaces, must correspond to the
``smallest'' toposes: those toposes
which are infinitesimal patches
around a focal point. This point of view is emphasized in Chapter 4 of DCCT \cite{urs:diff-coh}.

We may therefore, as a first pass,
identify logoses of spaces as
dual to those toposes $\Ea$ which are \emph{local} over a focal
point $\Fa$. A geometric morphism $p
: \Ea \to \Fa$ is local when it
admits a left adjoint right inverse
$f : \Fa \to \Ea$ in the
$(\infty,2)$-category of toposes which we call the
\emph{focal point} of $p$. If $\Ea$
is a topological space (that is, if
its corresponding logos $\ShOn(\Ea)$
is the
category of sheaves $\ShOn(X)$ on a sober topological
space $X$), then the terminal
geometric morphism $\gamma : \Ea \to
\Sa$ is local just when $X$ has a
focal point: a point $f \in X$ whose
only open neighborhood is the whole
of $X$. In particular, the prime
spectrum of a ring $A$ is local if and only if $A$ is a local
ring; in this case, the focal point is the
unique maximal ideal.

On the
logos side, this means that the
direct image $p_{\ast}$ admits a
fully faithful right adjoint
$p^{!}$ (which is $f_{\ast}$). All together, this gives an
adjoint triple between the
corresponding logoses:
\[
  \begin{tikzcd}
    \ShOn(\Ea) \ar[d, "p_{\ast}"]
    \ar[d, shift left = 5, hookleftarrow,
    bend left, "p^{!}"] \ar[d, shift
    right = 5, hookleftarrow, bend
    right, "p^{\ast}"'] \\
    \ShOn(\Fa)
  \end{tikzcd}
\]
Thinking of the objects of
$\ShOn(\Ea)$ as generalized spaces and
the objects of $\ShOn(\Fa)$ as mere homotopy types
(sheaves on a point), we may see the
direct image $p_{\ast}$ as taking
the underlying homotopy type of
points of a space, while $p^{\ast}$
and $p^{!}$ are the discrete and
codiscrete topologizations of bare
homotopy types, respectively. This
adjoint triple gives rise to an
adjoint pair
\[
p^{\ast}p_{\ast} \dashv p^{!}p_{\ast}
\]
of a idempotent comonad
$p^{\ast}p_{\ast}$ and idempotent
monad $p^{!}p_{\ast}$ on the logos
$\ShOn(\Ea)$. Understood as operations
on spaces, these are the discrete
and codiscrete retopologizations of
a space respectively.

Examples of local toposes with focal
point $\Fa$ having category of sheaves $\ShOn(\Fa) = \infty\type{Grpd}$ the
$\infty$-category of $\infty$-groupoids
include simplicial types
$\infty\type{Grpd}^{\Delta\op}$ (where
discrete is $0$-skeletal and
codiscrete is $0$-coskeletal),
continuous and differentiable
$\infty$-groupoids\footnote{These
  are the gros toposes of $\Ca^0$
  and $\Ca^{\infty}$ manifolds, respectively.}
$\type{Sh}_{\infty}(\{\Rb^n\})$
(where discrete means all charts are
constant, and codiscrete means that
any function valued in the set of
points is a chart), condensed
$\infty$-groupoids (where discrete
means discrete, and codiscrete means
codiscrete), and global equivariant
$\infty$-groupoids
$\infty\type{Grpd}^{\type{Glo}\op}$
(where discrete means being a
constant presheaf on the global
orbit category, and codiscrete means
being a presheaf representable by an
ordinary $\infty$-groupoid).

Shulman
\cite{mike:all} has
shown that every $\infty$-logos may
be presented by a model of homotopy type theory,
allowing reasoning conducted in
homotopy type theory to be
interpreted in any $\infty$-logos. In this sense, homotopy type theory is to
$\infty$-logoses as set theory is to the $1$-logoses of Grothendieck, Lawvere,
and Tierney.
In \emph{Brouwer's Fixed Point
  Theorem in Real-Cohesive Homotopy
  Type Theory}
\cite{mike:real-cohesive-hott},
Shulman also put forward a
\emph{spatial type theory} which may (conjecturally)
be interpreted into any local
geometric morphism. Spatial type
theory is characterized by including
an adjoint pair $\flat \dashv
\sharp$ of a lex comodality $\flat$
and lex modality $\sharp$. These are to be
interpreted as $p^{\ast}p_{\ast}$
and $p^{!}p_{\ast}$ respectively.

In spatial type theory, any type has
a spatial structure. The existence
of this spatial structure is
witnessed by the two opposite ways
that we can get rid of it: either we
can remove all the spatial
relationships between points, using
the ``discrete'' $\flat$ comodality, or we can
trivialize the spatial relations
using the ``codiscrete'' $\sharp$
modality. We emphasize that this
spatial structure is distinct from
the \emph{homotopical} structure
that all types have by virtue of the
identifications between their
elements. For example, the topological circle
\[
\Sb^1 := \{(x, y) : \Rb^2 \mid x^2 +
y^2 = 1\}
\]
has a spatial structure as a subset
of the Euclidean plane (as a sheaf
on the site of continuous manifolds,
for example), but is a
homotopy $0$-type (or ``set'')
without any non-trivial
identifications between its points;
in particular $\Omega \Sb^1 = \ast$.
The homotopy type $S^1$ of the
circle, however, is spatially
discrete but has many non-trivial
identifications of its point: in
particular $\Omega S^1 = \Zb$.

There is not, however, only one way
to be spatial in mathematics. For
example a simplicial topological
space has both a simplicial
structure and a topological
structure. This can be witnessed at
the level of toposes as well. If $p
: \Ea \to \Fa$ admits a focal point
$f : \Fa \to \Ea$, then
$f^{\Delta\op} : \Fa^{\Delta\op} \to
\Ea^{\Delta\op}$ is also a focal
point of
$p^{\Delta\op} : \Ea^{\Delta\op} \to
\Fa^{\Delta\op}$, where the logos
$\ShOn(\Ea^{\Delta\op}) :=
(\ShOn(\Ea))^{\Delta\op}$ is the
category of simplicial objects in
the logos $\ShOn(\Ea)$. But there is another local
geometric morphism
$\gamma : \Ea^{\Delta\op} \to \Ea$
where $\gamma_{\ast}$
sends a simplicial sheaf
$X_{\bullet}$ to $X_0$ and
$\gamma^{!}$ is given by the
\emph{$0$-coskeleton}
$\csk S_{n} := S^n$. These two
different axes of spatiality on the
objects of $\ShOn(\Ea)^{\Delta\op}$
commute, in that the following
diagram of adjoints commutes:
\[
  \begin{tikzcd}[row sep = large]
      & {\ShOn(\Ea)^{\Delta\op}}
      \ar[dl, hookleftarrow, bend left, end anchor={[shift={(0pt,3pt)}]}]
      \ar[dl, hookleftarrow, bend right]
      \ar[dr, hookleftarrow, bend left]
      \ar[dr, hookleftarrow, bend right, end anchor={[shift={(0pt,1pt)}]}] \\
      {\ShOn(\Fa)^{\Delta\op}}
      \ar[dr, hookleftarrow, bend left, start anchor={[shift={(0pt,-3pt)}]}]
      \ar[dr, hookleftarrow, bend right] &&
      {\ShOn(\Ea)}
      \ar[dl, hookleftarrow, bend left]
      \ar[dl, hookleftarrow, bend right, start anchor={[shift={(0pt,-1pt)}]}] \\
      & {\ShOn(\Fa)}
      \arrow["{p_\ast^{\Delta\op}}"', from=1-2, to=2-1]
      \arrow["{\gamma_{\ast}}", from=2-1, to=3-2]
      \arrow["{\gamma_{\ast}}", from=1-2, to=2-3]
      \arrow["{p_{\ast}}"', from=2-3, to=3-2]
    \end{tikzcd}
\]

In particular, we have that
$p^{\ast\Delta\op}p_{\ast}{}^{\Delta\op}$
and $\gamma^{\ast}\gamma_{\ast}$
commute as endofunctors of
$\ShOn(\Ea)^{\Delta\op}$. The former
discretely retopologizes a
simplicial space, while the latter
includes the space of $0$-simplices
as a $0$-skeletal simplicial space. Each focus gives an axis along which
the objects of the top logos
$\ShOn(\Ea)^{\Delta\op}$ may carry
spatial structure.

When working in, say, simplicial differential spaces, we would like to have
access to both the $\shape \dashv \flat \dashv \sharp$ of real cohesion and the
$\geomreal \dashv \sk \dashv \csk$ of simplicial cohesion. Shulman's spatial
type theory offers no way to do this: the
$\flat$ and $\sk$ comonads have incompatible claims on the notion of `crisp'
variable.

The solution is to allow a separate notion of crispness for each focus we are
interested in. In this section, we will describe the rules for a type theory
with commuting focuses, generalizing ordinary spatial type theory in the case of
a single non-trivial focus. We will then describe axioms which make these
into commuting \emph{cohesions}, in the sense of cohesive type theory.


To this end, we will fix an commutative idempotent monoid $\type{Focus}$ of
focuses; we will write the product of the focus $\focusA$ and the focus
$\focusB$ as $\focusA\focusB$. This product induces an ordering on the focuses
by saying that $\focusB \leq \focusA$ whenever $\focusB\focusA = \focusB$. With
respect to this ordering, the product becomes the meet; we may therefore also
think of $\type{Focus}$ as a meet semi-lattice. We will write the identity
element of $\type{Focus}$ as $\top$, and note that it is the top focus with
respect to the order.

For most of our purposes in this paper, our commutative idempotent monoid
$\type{Focus}$ of focuses will be freely generated by a finite set of basic
focuses.  Explicitly, we may take
$\type{Focus} = \Pa_{f}(\type{BasicFocuses})\op$ to be the set of finite subsets
of the set of basic focuses with union as the product, and therefore the
opposite of the ordering of subsets by inclusion.

All variables in the context will be annotated with the focus that they are in:
\[
x :_{\focusA} X \vdash t : T
\]
In general, we will abbreviate the context entry $x :_{\top} X$ as $x : X$.  In
the case that $\type{Focus} = \{\focusA \leq \top\}$ is freely generated by one
basic focus, we recover the split context used in Shulman's spatial type theory,
where our context $x :_{\focusA} X, \, y :_{\top} Y \ctx$ corresponds to
Shulman's context $x :: X \mid y : Y \ctx$.

To describe the typing rules, we will need a couple of auxiliary operations on
contexts. The first operation $\focusA\Gamma$ adds a specific focus
$\focusA$ to the annotation on every
variable in a context. So:
\begin{align*}
  \focusA(\cdot) &:\defeq \cdot \\
  \focusA(\Gamma, x :_{\focusB} A) &:\defeq (\focusA \Gamma), x :_{\focusA\focusB} A
\end{align*}
We also need an operation $\focusA \setminus \Gamma$ that deletes any variables \emph{not} contained within a
given focus $\focusA$; this is the equivalent of going from
$\Delta \mid \Gamma \ctx$ to $\Delta \mid \cdot \ctx$ in ordinary spatial type theory.
\begin{align*}
  \focusA \setminus (\cdot) &:\defeq \cdot \\
  \focusA \setminus (\Gamma, x :_{\focusB} A) &:\defeq
                               \begin{cases}
                                 (\focusA \setminus \Gamma), x :_{\focusB} A & \text{if $\focusB \leq \focusA$} \\
                                 \focusA \setminus \Gamma & \text{otherwise}
                               \end{cases}
\end{align*}

We say that a variable $x :_{\focusB} X$ is \emph{$\focusA$-crisp} if $\focusB \leq
\focusA$, and so the $\focusA$-crisp variables are precisely those that survive the $\focusA \setminus \Gamma$ operation. We say that a term $t : T$ is \emph{$\focusA$-crisp} if both it and its
type $T$ only contain
$\focusA$-crisp variables, i.e., it is well-formed in context $\focusA \setminus \Gamma$.

We are now ready to describe the rules of the type theory. All the usual type
formers --- $\Sigma$s, $\Pi$s, etc. --- will be included as usual, only
referring to variables of the top focus $\top$. By the convention that $x
:_{\top} X$ be written as $x : X$, these rules look exactly as they do
usually. We therefore focus on the new features of type theory with commuting
focuses.

\paragraph{Structural Rules.}

\begin{mathpar}
  \inferrule*[left=ctx-empty]{~}{\cdot \ctx} \and
  \inferrule*[left=ctx-ext]{\focusA \setminus \Gamma \yields A \jtype}{\Gamma, x :_{\focusA} A \ctx} \and
  \inferrule*[left=var]{\Gamma, x :_{\focusA} A, \Gamma' \ctx}{\Gamma, x :_{\focusA} A, \Gamma' \yields x : A}
\end{mathpar}

In prose, these rules read as follows:
\begin{itemize}
\item \rulen{ctx-empty}: The empty context is a context.
\item \rulen{ctx-ext}: If $A$ is a $\focusA$-crisp type in context $\Gamma$, then
  $\Gamma, x :_{\focusA} A \ctx$ is a context.
\item \rulen{var}: If $x :_{\focusA} A$ appears in a context, then the variable $x$ has type $A$ in
  that context.
\end{itemize}

\begin{rmk}
  Given a context $\Gamma, x :_{\focusA} A \ctx$, it must be the case that $A$
  only depends on the variables in $\Gamma$ which are themselves
  $\focusA$-crisp. This careful context formation rule is what replaces the
  division of the context into two zones in Shulman's spatial type theory. In
  the conclusion of the variable rule, the type $A$ is well-formed in context
  $\Gamma, x :_{\focusA} A, \Gamma' \ctx$ by the admissible \rulen{divide-wk}
  rule given below, followed by further weakening with $\Gamma'$.
\end{rmk}

\begin{rmk}
  Rather than annotating variables, may be tempting to try a floating context
  separator $\mid_{\focusA}$ for each focus, so that the variables to the left
  of $\mid_{\focusA}$ are precisely the $\focusA$-crisp ones. Such contexts are
  not sufficiently general; specifically, the $\flat$-elimination rule will let
  us produce a context containing $x :_{\focusA} A, y :_{\focusB} B$ which
  clearly cannot be separated in this way.
\end{rmk}

The following rules and equations will be made admissible, with the proofs
sketched in \cref{sec:proofs-admiss}.
\begin{mathpar}
  \inferrule*[left=wk, fraction={-{\,-\,}-}]{\Gamma, \Gamma' \yields \judge}{\Gamma,
    x :_{\focusA} A, \Gamma' \yields \judge} \and

  \inferrule*[left=subst, fraction={-{\,-\,}-}]{\focusA \setminus \Gamma
    \yields a : A \and \Gamma, x:_{\focusA} A, \Gamma' \yields \judge}{\Gamma,
    \Gamma'[a/x] \yields \judge[a/x]} \\

  \inferrule*[left=promote-ctx, fraction={-{\,-\,}-}]{\Gamma
    \ctx}{\focusA\Gamma \ctx} \and

  \inferrule*[left=promote, fraction={-{\,-\,}-}]{\Gamma
    \yields \judge}{\focusA\Gamma \yields \judge} \\

  \inferrule*[left=divide-ctx, fraction={-{\,-\,}-}]{\Gamma \ctx}{\focusA
    \setminus \Gamma \ctx} \and

  \inferrule*[left=divide-wk, fraction={-{\,-\,}-}]{\focusA \setminus \Gamma
    \yields \judge}{\Gamma \yields \judge} \\

  \focusA(\focusB \Gamma) \defeq (\focusA \focusB) \Gamma \and
  \focusA \setminus (\focusB \setminus \Gamma) \defeq (\focusB \focusA) \setminus \Gamma \and
\end{mathpar}

\begin{itemize}
\item First, we have ordinary weakening by a variable, and a `crisp'
  substitution similar to that used in spatial type theory, where crisp
  variables may only be substituted with similarly crisp terms. These specialize
  to the ordinary weakening and substitution rules when used for
  $\focusA \defeq \top$.
\item \rulen{promote-ctx} corresponds to the application of the
  endofunctor $\focusA$ to the context $\Gamma$, and \rulen{promote} to
  precomposition with the counit morphism $\focusA \Gamma \to
  \Gamma$.
\item \rulen{divide-ctx} gives the largest `subcontext'
  $\focusA \setminus \Gamma$ of $\Gamma$ such that there is a substitution
  $\Gamma \to \focusA (\focusA \setminus \Gamma)$. The context operation
  $\focusA \setminus -$ thus acts like a left-adjoint to $\focusA -$, although
  semantically a left-adjoint may not exist.
\end{itemize}

\paragraph{Rules for $\flat$.}

We now come to the rules for the $\flat$ comodality.
\begin{mathpar}
  \inferrule*[left=$\flat$-form]{\focusA \setminus \Gamma \yields A \jtype}{\Gamma \yields
    {\flat_{\focusA}} A \jtype} \\
  \inferrule*[left=$\flat$-intro]{\focusA \setminus \Gamma \yields
    M:A}{\Gamma \yields M^{\flat_{\focusA}}: {\flat_{\focusA}} A} \and
  \inferrule*[left=$\flat$-elim]{
    \focusB\focusA \setminus \Gamma \yields A \jtype \and
    \Gamma, x :_{\focusB} {\flat_{\focusA}} A \yields C \jtype \\\\
    \focusB \setminus \Gamma \yields M : {\flat_{\focusA}} A \\ \Gamma, u
    :_{\focusB\focusA} A \yields N : C[u^{\flat_{\focusA}}/x]}{\Gamma \yields
    (\ind{M}{u^{\flat_{\focusA}}}{N}):C[M/x]} \and
  \inferrule*[left=$\flat$-beta]{
    \focusB\focusA \setminus \Gamma \yields A \jtype \and
    \Gamma, x :_{\focusB} {\flat_{\focusA}} A \yields C \jtype \\\\
    \focusB\focusA \setminus \Gamma \yields K : A \\
    \Gamma, u :_{\focusB\focusA} A \yields N : C[u^{\flat_{\focusA}}/x]} {\Gamma
    \yields(\ind{K^{\flat_{\focusA}}}{u^{\flat_{\focusA}}}{N}) \defeq N[K/u] \;:\:
    C[K^{\flat_{\focusA}}/x]}
\end{mathpar}

In prose, these rules read as follows:
\begin{itemize}
\item \rulen{$\flat$-form}: If $A$ is a $\focusA$-crisp type, then we may form $\flat_{\focusA} A \jtype$.
\item \rulen{$\flat$-intro}: If $M$ is a $\focusA$-crisp term of type $A$, then we may form
  $M^{\flat_{\focusA}}$ of type $\flat_{\focusA} A$.
\item \rulen{$\flat$-elim}: If $C$ is a type depending on the $\focusB$-crisp variable
  $x :_{\focusB} \flat_{\focusA} A$, and $M : \flat_{\focusA} A$ is a
  $\focusB$-crisp element of type $\flat_{\focusA} A$, then we may assume that
  $M$ is of the form $u^{\flat_{\focusA}}$ for a $\focusB\focusA$-crisp variable
  $u :_{\focusB\focusA} A$ when defining an element of $C[M/x]$. We write this
  element as $(\ind{M}{u^{\flat_{\focusA}}}{N}) : C[M/x]$ where
  $N : C[u^{\flat_{\focusA}}/x]$ is the element we defined assuming that $M$ was
  of the form $u^{\flat_{\focusA}}$. The equation
  $\focusA \setminus (\focusB \setminus \Gamma) \defeq (\focusB \focusA)
  \setminus \Gamma$ is necessary here to know that the type $\flat_{\focusA} A$ is
  well-formed in context $\focusB \setminus \Gamma$.

\item \rulen{$\flat$-beta}: If $M$ actually is of the form $K^{\flat_{\focusA}}$ for
  suitably crisp $K$, then we simply substitute $K$ in for $u$. The
  term $K$ must be $\focusB\focusA$-crisp for both the
  \rulen{$\flat$-intro} and \rulen{$\flat$-elim} to have been applied,
  and so its substitution for the $\focusB\focusA$-crisp variable $u$
  is well-formed.
\end{itemize}

\begin{rmk}
These rules are stronger than the ones used by Shulman for spatial
type theory, even in the case of a single focus. We have built in a
\emph{$\focusB$-crisp induction principle} for $\flat_{\focusA}$, for
any two focuses $\focusA$ and $\focusB$: if the term we are inducting
on is already $\focusB$-crisp, then we may maintain that crispness in
the new assumption $u$.

If we have a single non-trivial focus $\focusA$, as is the case in
Shulman's type theory, then taking $\focusB = \focusA$ in the above
expression yields the `crisp $\flat$ induction' principle of
\cite[Lemma 5.1]{mike:real-cohesive-hott}. This induction principle is
proven by taking a detour through $\sharp$, but here we choose to
build it into the rule directly.


Our elimination rule is in fact also admissible from the less general
one that requires the freshly bound variable to only be
$\focusA$-crisp, but we choose the more general rule for
convenience.
\end{rmk}

\paragraph{Rules for $\sharp$.}
The rules for $\sharp$ are a little simpler, and in the case of a single focus
specialize exactly to the rules of spatial type theory.

\begin{mathpar}
  \inferrule*[left=$\sharp$-form]{\focusA\Gamma \yields A\jtype}{\Gamma \yields {\sharp_{\focusA}} A \jtype} \and
  \inferrule*[left=$\sharp$-intro]{\focusA\Gamma \yields M:A}{\Gamma \yields M^{\sharp_{\focusA}} : {\sharp_{\focusA}} A}\and
  \inferrule*[left=$\sharp$-elim]{\focusA \setminus \Gamma \yields N:{\sharp_{\focusA}} A}{\Gamma \yields N_{\sharp_{\focusA}} : A}\\
  \inferrule*[left=$\sharp$-beta]{\focusA \setminus \Gamma \yields M:A}{\Gamma \yields (M^{\sharp_{\focusA}})_{\sharp_{\focusA}} \defeq M : A}\and
  \inferrule*[left=$\sharp$-eta]{\Gamma \yields N:{\sharp_{\focusA}} A}{\Gamma \yields N \defeq (N_{\sharp_{\focusA}})^{\sharp_{\focusA}} :
    {\sharp_{\focusA}} A}
\end{mathpar}

In prose, these rules read as follows:
\begin{itemize}
\item \rulen{$\sharp$-form}: When forming the type $\sharp_{\focusA} A$, all variables may be
  used in $A$ as though they are $\focusA$-crisp.
\item \rulen{$\sharp$-intro}: When forming a term $M^{\sharp_{\focusA}} : \sharp_{\focusA} A$,
  all variables may be used in $M$ as though they are $\focusA$-crisp.
\item \rulen{$\sharp$-elim}: If $N$ is a $\focusA$-crisp element of $\sharp_{\focusA} A$, we
  may extract an element $N_{\sharp_{\focusA}} : A$.
\item \rulen{$\sharp$-beta}: If $M$ is a $\focusA$-crisp element of $A$, then
  $M^{\sharp_{\focusA}}{}_{\sharp_{\focusA}} \defeq M$.
\item \rulen{$\sharp$-eta}: Any term of $N : \sharp_{\focusA} A$ is definitionally equal to
  $N_{\sharp_{\focusA}}{}^{\sharp_{\focusA}}$. As in ordinary spatial type
  theory, the term $N_{\sharp_{\focusA}}$ may not be well-typed on its own,
  because it may use non-crisp variables of the context $\Gamma$. It is however
  well-typed underneath the outer $(-)^{\sharp_{\focusA}}$, since the
  introduction rule allows us to use any variable as though it is
  $\focusA$-crisp.
\end{itemize}

\begin{rmk}
  Perhaps surprisingly, the shape of the \rulen{$\sharp$-form} and
  \rulen{$\sharp$-intro} rules is what builds the left-exactness of $\flat$ into
  the theory. This is the case even in ordinary spatial type theory, not
  a feature that only appears in this multi-focus setting. The trick is that
  the promotion operation $\focusA \Gamma$ distributes over the context
  extensions in $\Gamma$ rather than being a `stuck' context former applied to
  $\Gamma$ as a whole. Specifically, when using $\sharp$ to derive crisp
  $\mathsf{Id}$-induction, one applies $\sharp$ to a type
  \[x :: A, y :: A, p :: (x = y) \mid \cdot \yields C \jtype,\] yielding a type
  \[ \cdot \mid x : A, y : A, p : (x = y) \yields \sharp C \jtype.\]
  Internalized, the former context represents the type
  $\dsum{x : \flat A} \dsum{y : \flat A} \flat (x_\flat = y_\flat)$, but
  \rulen{$\sharp$-form} treats it as identical to $\flat \left( \dsum{x : A}
    \dsum{y : A} (x = y) \right)$ when applying adjointness.
\end{rmk}

\begin{rmk}
  In most cases of interest, our commutative idempotent monoid of focuses is
  freely generated by a finite set of basic focuses. In this situation, it
  suffices to provide the $\flat$ and $\sharp$ only for the basic focuses, as
  the remainder can be derived. The
  top focus $\top$ (which semantically corresponds to the entire topos we are
  working in) has both $\flat_{\top} A$ and $\sharp_{\top}A$ canonically
  equivalent to $A$. And given focuses $\focusA$ and $\focusB$, it is quickly
  proven that $\flat_{\focusA\focusB}$ is equivalent to
  $\flat_{\focusA}\flat_{\focusB}$ and similarly for the $\sharp$s.
\end{rmk}

\paragraph{Related Type Theories.}

Besides the original spatial type theory, there are several other
dependent modal type theories that come close to our needs.

The `adjoint type theory' perspective \cite{reed:modal, ls:1var, lsr:multi} was
the guiding principle that led to the original spatial type theory of
\cite{mike:real-cohesive-hott}. Indeed, when instantiated with appropriate mode
theory, the framework of \cite{lsr:multi} reproduces a simply typed version of
the theory presented here. The specific mode theory to be used is a cartesian
monoid with a system of commuting, product-preserving endomorphisms. A
dependently typed variant of adjoint type theory is not yet forthcoming, but we
expect that our dependent type theory would be an instance of it.

An separate line of work on modal type theories is Multimodal Type Theory
\cite{mtt, mtt:tech-report}. In MTT, every mode morphism $\mu$ is reified in the type
theory as a \emph{positive} type former, and each modality $\mathsf{mod}_\mu$
must have a left-adjoint-like context operator written $\lock_\mu$. If we do not
assume the existence of $\shape$, then we are only able to describe $\sharp$ in
this way.

Later work \cite{mtt-fitch} describes a multimodal type theory where each mode
morphism becomes a (more convenient) \emph{negative} type former. The semantic
requirements are even stronger: the functor corresponding to the modality must
be a dependent right-adjoint \cite{drats}, whose left adjoint is itself a
parametric right adjoint. This is too strict even to capture $\sharp$ without
additional assumptions.

In \cite[\S 14]{cavallo:thesis}, an alternative `cohesive type theory' is presented,
using a combination of the above two styles of modal operator. Rather than
working with the endofunctors on the topos of interest, the cohesive setting is
kept as an adjoint quadruple
$\Pi_0 \dashv \mathrm{Disc} \dashv \Gamma \dashv \mathrm{CoDisc}$. A positive
type former is used for $\mathrm{Disc}$ and negative type formers for $\Gamma$
and $\mathrm{CoDisc}$, due to the requirements on having one or two
left-adjoints. It is likely that this could be extended to commuting cohesions,
but the interactions of the various context $\lock_{-}$ operations for the
left-adjoints may be difficult to describe.

The type theory with context structure most formally similar to ours is
ParamDTT~\cite{nvd:parametric-quantifiers, nd:degrees-of-relatedness}, where
variables in the context annotated with a modality indicate a variable under
that modality directly, not its left adjoint. It is from this work that we take
the left-division notation $- \setminus \Gamma$ for the clearing operation on
contexts, which itself has appeared in other guises, for example
\cite{pfenning:intensionality, abel:thesis, abel:polarised-subtyping}. ParamDTT
uses a fixed `mode theory' with three modalities $\{\P, \mathsf{id}, \sharp\}$
equipped with a particular composition law, but it is clear that the rules for
contexts and basic type formers would work equally well for other sets of
modalities. A version of the cohesive $\flat$ can be derived from the `modal
$\Sigma$-type', fixing the second component to be the unit type. There does not
appear to be a way to derive the ordinary (negative) rules for $\sharp$
in ParamDTT.

\section{Specializing a Focus}\label{sec:specializing}
A focus gives a specific axis along which a type may be
spatial. In simplicial cohesion, we have a simplicial focus $\sk \dashv \csk$
and in differential cohesion a differential focus $\flat \dashv \sharp$. But
what makes the simplicial focus \emph{simplicial} and the differential focus
\emph{differential}? In this section, we will investigate two axioms schemes which can determine the
peculiarities of a given focus. In the next section, we will see these axioms in use.

First, we note that with a single focus, type theory with commuting focuses is the
same theory as Shulman's \emph{spatial type theory} in
\cite{mike:real-cohesive-hott}.

\begin{thm}
  Any of the lemmas and theorems proven in \S 3, 4, 5, and 6 of
  \emph{Real Cohesion} \cite{mike:real-cohesive-hott} concerning $\flat$ and
  $\sharp$ and using no axioms are true also of $\flat_{\focusA}$ and
  $\sharp_{\focusA}$ for any fixed focus $\focusA$. Theorems which do involve
  the use of axioms are also valid, so long as the crispness used in those
  axioms is interpreted as $\focusA$-crispness.
\end{thm}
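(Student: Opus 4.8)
The plan is to set up a sound interpretation of Shulman's spatial type theory inside our theory, sending the comodality $\flat$ to $\flat_{\focusA}$, the modality $\sharp$ to $\sharp_{\focusA}$, and every crisp variable to a $\focusA$-crisp variable. On contexts, a split context $\Delta \mid \Gamma$ of spatial type theory is sent to the context $(\Delta \mid \Gamma)^{\ast}$ in which each crisp entry $x :: A$ of $\Delta$ becomes $x :_{\focusA} A^{\ast}$ and each cohesive entry $x : A$ of $\Gamma$ becomes $x :_{\top} A^{\ast}$. On types and terms, the interpretation is the evident homomorphic extension: all the type formers $\Sigma$, $\Pi$, $\mathsf{Id}$, coproducts, $\mathsf{W}$-types, universes, higher inductive types, etc., are interpreted by their namesakes (recall that in spatial type theory these bind only cohesive variables, which is exactly our convention that the ordinary formers bind variables of the top focus), while $(\flat A)^{\ast} \defeq \flat_{\focusA}(A^{\ast})$ and $(\sharp A)^{\ast} \defeq \sharp_{\focusA}(A^{\ast})$. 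The first thing to check is that $(-)^{\ast}$ intertwines the context-level operations of the two theories: Shulman's ``crispening'' operation that discards the cohesive zone corresponds to our $\focusA \setminus (-)$, which discards precisely the variables that fail to be $\focusA$-crisp; its idempotence matches the admissible equation $\focusA \setminus (\focusA \setminus \Gamma) \defeq (\focusA\focusA) \setminus \Gamma = \focusA \setminus \Gamma$; and Shulman's rule permitting the use of a crisp hypothesis is a special case of our \rulen{var}.

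The main step is then a rule-by-rule verification that the image under $(-)^{\ast}$ of each inference rule of spatial type theory is derivable (or admissible) in our theory. The structural rules --- weakening, the crisp substitution in which crisp variables may be replaced only by crisp terms, and the zone manipulations --- all follow from our \rulen{wk}, \rulen{subst}, \rulen{divide-wk}, \rulen{promote}, together with the admissible context equations of \cref{sec:proofs-admiss}, specialized to the two focuses $\top$ and $\focusA$. The rules for $\flat$ are the instances of our $\flat_{\focusA}$ rules with the auxiliary focus taken to be $\focusB \defeq \top$: since $\top\focusA = \focusA$, these reproduce $\flat$-formation, $\flat$-introduction, the non-crisp $\flat$-elimination, and $\flat$-$\beta$ exactly. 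The \emph{crisp} $\flat$-induction principle of \cite[Lemma 5.1]{mike:real-cohesive-hott} --- itself one of the lemmas to which the theorem refers --- is the instance $\focusB \defeq \focusA$ of our \rulen{$\flat$-elim}, so its \emph{statement} holds on the nose in our setting (we are free to replay Shulman's derivation of it via $\sharp$ as well, but we need not). The rules for $\sharp$ translate directly: our \rulen{$\sharp$-form} and \rulen{$\sharp$-intro} ask for a derivation over $\focusA\Gamma$, and $\focusA\big((\Delta \mid \Gamma)^{\ast}\big)$ is precisely Shulman's context with every variable moved into the crisp zone, which is what his $\sharp$-formation and -introduction require, while \rulen{$\sharp$-elim}, \rulen{$\sharp$-beta}, and \rulen{$\sharp$-eta} match verbatim. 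Finally, the rules of the ambient homotopy type theory translate trivially, mentioning neither $\flat$ nor $\sharp$ and binding only $\top$-variables.

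With the interpretation and the rule-matching in place, a routine induction on derivations shows that if a judgement $\mathcal{J}$ is derivable in spatial type theory then $\mathcal{J}^{\ast}$ is derivable in our theory; in particular every lemma and theorem statement of \S 3--6 of \emph{Real Cohesion} proved without axioms, once $\flat$, $\sharp$, and crispness are read as $\flat_{\focusA}$, $\sharp_{\focusA}$, and $\focusA$-crispness, becomes a theorem of ours. For a result that does invoke one of Shulman's axioms $\mathcal{A}$, applying $(-)^{\ast}$ to $\mathcal{A}$ yields an axiom $\mathcal{A}^{\ast}$ phrased with $\focusA$-crispness, and the translated proof derives the translated conclusion from $\mathcal{A}^{\ast}$ --- which is exactly the proviso in the statement. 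The only real work is the bookkeeping in the second step; and the one place that might appear to be an obstacle, namely Shulman's $\flat$-elimination --- whose crisp refinement is a derived lemma rather than a primitive rule in his presentation --- is in fact the easiest case here, precisely because our \rulen{$\flat$-elim} already builds in the $\focusB$-crisp induction principle for every pair of focuses.
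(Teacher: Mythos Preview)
Your proposal is correct and takes essentially the same approach as the paper: the paper's proof is the single sentence ``The rules for $\flat_{\focusA}$ and $\sharp_{\focusA}$ specialize to Shulman's rules, and therefore his proofs carry through directly,'' and what you have written is precisely a careful unpacking of that sentence into an explicit syntactic translation and a rule-by-rule soundness check. Your identification of Shulman's zone-clearing with $\focusA\setminus(-)$, of his $\sharp$-promotion with $\focusA(-)$, and of his non-crisp and crisp $\flat$-elimination with the instances $\focusB=\top$ and $\focusB=\focusA$ of \rulen{$\flat$-elim}, all match the paper's own remarks surrounding the rule presentations.
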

\begin{proof}
  The rules for $\flat_{\focusA}$ and $\sharp_{\focusA}$ specialize to
  Shulman's rules, and therefore his proofs carry through directly.
\end{proof}

Specifically, $\flat_{\focusA}$ is a coreflector and $\sharp_{\focusA}$ is a
monadic modality, both are lex, and $\flat_{\focusA}$ is ($\focusA$-crisply)
left-adjoint to $\sharp_{\focusA}$.

Since adding a focus only expands the rules of the type theory and does not
restrict the application of any of the rules for any of the other focuses, any
of the theorems proven in this section for a single focus will apply when
working with multiple focuses as well.

For the rest of this section, we will work within a single focus $\focusA$, and
for that reason we will drop the annotations by $\focusA$ in our expressions.
For example, we will write $\flat_{\focusA}$ as simply $\flat$, and we will
write $x :_{\focusA} X$ as $x :: X$, following Shulman.

\subsection{Detecting Continuity}

In this section, we will look at an axiom which ties the
liminal sort of ``continuity'' implied by the crisp variables of the type theory
to the concrete continuity of a particular type $G$.

Our axiom will take the form of a lifting property characterizing those crisp
maps which are $\sharp$-modal. As we will show in the upcoming
\cref{thm:sharp.modal.map.lifting}, a crisp map is $\sharp$-modal if
and only if it lifts crisply (in a sense made precise in
\cref{defn:crisp.lifting}) against all of the $\flat$-counits.

\begin{defn}\label{defn:crisp.lifting}
  Let $c :: A \to B$ and $f :: X \to Y$ be crisp
  maps. We say that $c$ \emph{lifts crisply against $f$} if for any
  crisp square as on the left below, there is a unique crisp filler.
\[
  \begin{tikzcd}
    A & X \\
    B & Y
    \arrow["c"', from=1-1, to=2-1]
    \arrow["f", from=1-2, to=2-2]
    \arrow["\forall", from=1-1, to=1-2]
    \arrow["\forall"', from=2-1, to=2-2]
    \arrow["{\exists !}", dashed, from=2-1, to=1-2]
  \end{tikzcd}
  \quad\quad\quad
  \begin{tikzcd}
    {\flat(X^B)} & {\flat(X^A)} \\
    {\flat(Y^B)} & {\flat(Y^A)}
    \arrow["{\flat(\circ f)}", from=1-2, to=2-2]
    \arrow["{\flat(c \circ)}"', from=2-1, to=2-2]
    \arrow["{\flat(\circ f)}"', from=1-1, to=2-1]
    \arrow["{\flat(c \circ)}", from=1-1, to=1-2]
    \arrow["\lrcorner"{anchor=center, pos=0.125}, draw=none, from=1-1, to=2-2]
  \end{tikzcd}
\]
More formally, we write $c \perp_{\flat} f$ for the proposition that the
square on the right is a pullback.
\end{defn}

\begin{thm}\label{thm:sharp.modal.map.lifting}
  A crisp map $f :: X \to Y$ is $\sharp$-modal if and only if for all crisp $A$,
  $(\varepsilon : \flat A \to A) \perp_{\flat} f$.
\end{thm}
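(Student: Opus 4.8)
The plan is to run both directions through the canonical comparison map of the $\sharp$-naturality square of $f$. Write $X' := \sharp X \times_{\sharp Y} Y$ for the pullback of $\sharp f$ along the unit $\eta^{\sharp}_{Y} \colon Y \to \sharp Y$, and let $i \colon X \to X'$ be the map induced by $\eta^{\sharp}_{X}$ and $f$; by the general calculus of modalities, $f$ is $\sharp$-modal exactly when $i$ is an equivalence (equivalently, when every fibre of $f$ is $\sharp$-modal). So it suffices to prove, for each crisp $A$, the local statement
\[
(\varepsilon \colon \flat A \to A) \perp_{\flat} f \quad\Longleftrightarrow\quad \flat\big(i \circ (-) \colon X^{A} \to (X')^{A}\big) \ \text{is an equivalence,}
\]
and then to observe that $i$ is an equivalence iff $\flat(i \circ -)$ is an equivalence for every crisp $A$.

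First I would unwind the left-hand side. By \cref{defn:crisp.lifting} applied with $c := \varepsilon$, the proposition $\varepsilon \perp_{\flat} f$ says that the map
\[
\flat(X^{A}) \;\longrightarrow\; \flat(X^{\flat A}) \times_{\flat(Y^{\flat A})} \flat(Y^{A}), \qquad g \longmapsto (g \circ \varepsilon,\; f \circ g),
\]
is an equivalence. Using the crisp adjunction $\flat \dashv \sharp$ in the form $\flat(Z^{\flat A}) \simeq \flat\big((\sharp Z)^{A}\big)$ (natural in $Z$), together with left-exactness of $\flat$ and the fact that $(-)^{A}$ preserves finite limits, the target is identified with $\flat\big((\sharp X \times_{\sharp Y} Y)^{A}\big) = \flat\big((X')^{A}\big)$; and a diagram chase with the naturality squares of the $\flat$-counit and the $\sharp$-unit — using the identities $\flat\sharp = \flat$ and $\sharp\flat = \sharp$ that relate them — identifies the comparison map above with $\flat(i \circ -)$. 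This establishes the displayed equivalence.

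The forward direction is then immediate: if $f$ is $\sharp$-modal, $i$ is an equivalence, so $i \circ (-) \colon X^{A} \to (X')^{A}$ is an equivalence for all $A$, so $\flat(i \circ -)$ is too, so $\varepsilon \perp_{\flat} f$ for every crisp $A$. For the converse, suppose $\flat(i \circ -) \colon \flat(X^{A}) \to \flat((X')^{A})$ is an equivalence for every crisp $A$. Since $X$, $Y$ and $f$ are crisp, so is $X'$, and I would test with $A := X'$ and $A := X$. Taking $A := X'$: the crisp term $\id_{X'}^{\flat} : \flat\big((X')^{X'}\big)$ has a crisp preimage under the crisp equivalence $\flat(i \circ -)$, which by crisp $\flat$-induction has the form $r^{\flat}$ for a crisp $r \colon X' \to X$; then $(i \circ r)^{\flat} = \flat(i \circ -)(r^{\flat}) = \id_{X'}^{\flat}$, and since $(-)^{\flat}$ is injective on crisp terms (post-compose with $\varepsilon$) we get $i \circ r = \id_{X'}$. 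Taking $A := X$: both $(r \circ i)^{\flat}$ and $\id_{X}^{\flat}$ are carried to $i^{\flat}$ by $\flat(i \circ -)$ (using $i \circ r = \id_{X'}$), and injectivity of that equivalence gives $r \circ i = \id_{X}$. Hence $i$ is an equivalence with inverse $r$, so $f$ is $\sharp$-modal.

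The main obstacle is the identification in the second paragraph: pinning down $\flat(X^{\flat A}) \times_{\flat(Y^{\flat A})} \flat(Y^{A}) \simeq \flat\big((X')^{A}\big)$ \emph{compatibly with the structure maps}, so that the comparison map really is $\flat(i \circ -)$ rather than merely being an abstract equivalence at the same time. Each ingredient — the crisp $\flat \dashv \sharp$ adjunction, left-exactness of $\flat$, and the interplay of the $\flat$-counit with the $\sharp$-unit — is available from \emph{Real Cohesion} and from the rules already set down, but assembling them and verifying that the relevant naturality squares commute is the delicate step; the retract argument in the converse, although it is the conceptual reason that crisp liftability against \emph{all} the counits $\varepsilon$ forces $\sharp$-modality, is routine once $\flat(i \circ -)$ is known to be an equivalence.
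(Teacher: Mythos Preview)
Your argument is correct and shares the paper's core idea for the converse --- show that the gap map $i : X \to X' = \sharp X \times_{\sharp Y} Y$ is an equivalence by probing with $A = X'$ and $A = X$ --- but you organize things differently. You first establish the per-$A$ reformulation $\varepsilon \perp_{\flat} f \Leftrightarrow \flat(i \circ -)$ is an equivalence, using the crisp adjunction $\flat(Z^{\flat A}) \simeq \flat((\sharp Z)^{A})$ together with lexness of $\flat$ and of $(-)^{A}$; this front-loads the delicate naturality check (which you flag as the main obstacle) but then makes both directions fall out cleanly, the converse becoming a pure Yoneda-style retract argument. The paper instead treats the two directions asymmetrically: its forward direction bypasses $i$ entirely, using that $\sharp$-modal maps lift against $\sharp$-equivalences and that $\varepsilon$ is one; its converse builds an explicit lifting square with $A = X'$ (top map extracting the $\sharp X$ component by hand), obtains the filler $k$, and then verifies directly that $k$ lives over both pullback projections before invoking uniqueness of fillers with $A = X$. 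Your packaging is more uniform and arguably more conceptual; the paper's is more explicit and avoids having to chase the adjunction equivalences to confirm that the comparison map really is $\flat(i \circ -)$.
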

\begin{proof}
If $f$ is $\sharp$-modal, then since $\sharp$ is lex, it lifts on the right against all
$\sharp$-equivalences. For any crisp $A$, the
$\flat$-counit $\varepsilon : \flat A \to A$ is
a $\sharp$-equivalence by
\cite[Theorem 6.22]{mike:real-cohesive-hott}. Therefore, the square
\[
  \begin{tikzcd}
    {X^A} & {X^{\flat A}} \\
    {Y^A} & {Y^{\flat A}}
    \arrow["f\circ"', from=1-1, to=2-1]
    \arrow["{\circ\varepsilon}", from=1-1, to=1-2]
    \arrow["{f \circ }", from=1-2, to=2-2]
    \arrow["{\circ\varepsilon}"', from=2-1, to=2-2]
    \arrow["\lrcorner"{anchor=center, pos=0.125}, draw=none, from=1-1, to=2-2]
  \end{tikzcd}
\]
is a pullback, and since $\flat$ preserves crisp pullbacks
(\cite[Theorem 6.10]{mike:real-cohesive-hott}), we see that
$\varepsilon \perp_{\flat} f$.

On the other hand, suppose that $f$ lifts crisply on the right
against all $\flat$-counits. To show that $f$ is $\sharp$-modal, it will
suffice to show that its $\sharp$-naturality square is a pullback.
Let $X \to \sharp X \times_{\sharp Y} Y$ be the gap map of
the $\sharp$-naturality square of $f$, seeking to show that this map
is an equivalence. It suffices to split the gap map over the naturality square, by
the universal property of the pullback. So, consider the crisp square
\[\begin{tikzcd}
    {\flat(\sharp X \times_{\sharp Y} Y)} & X \\
    {\sharp X \times_{\sharp Y} Y} & Y
    \arrow["\snd"', from=2-1, to=2-2]
    \arrow["{\varepsilon}"', from=1-1, to=2-1]
    \arrow["f", from=1-2, to=2-2]
    \arrow["F", from=1-1, to=1-2]
    \arrow["k", dashed, from=2-1, to=1-2]
  \end{tikzcd}
\]
where $F(t) :\defeq (\ind{p^{\flat}}{t}{(\fst p)_{\sharp}})$
is a version of the first projection. To check that the square commutes, it
suffices by $\flat$-induction to give, for crisp
elements $u :: \sharp X$, $y :: Y$, and $p
:: (\sharp f(u) = y^{\sharp})$, a term of type
$f(u_{\sharp}) = y$. But we have crisply that $\sharp f(u) \defeq
\sharp f (u_{\sharp}{}^{\sharp}) =
(f(u_{\sharp}))^{\sharp}$ by the definition of
$\sharp f$, and composing this path with $p$ we know $p' ::
(f(u_{\sharp}))^{\sharp} = y^{\sharp}$. By the
lexness of $\sharp$, we therefore also have $p'' ::
\sharp(f(u_{\sharp}) = y)$, so that the square commutes by $p''{}_{\sharp}$.

By hypothesis, there is a unique crisp map
$k : \sharp X \times_{\sharp Y} Y \to X$ filling this
square. The bottom triangle says precisely that $k$ lives over the second projection. We
will turn the top triangle into a proof that $k$ lives over the first projection.

Let $(u, y , p) : \sharp X \times_{\sharp Y} Y$, seeking to show that
$k(u, y, p)^{\sharp} = u$. This latter type of paths is codiscrete (because
$\sharp X$ is codiscrete), and so when mapping into it we may assume by that $u$
is of the form $x^{\sharp}$, reducing our goal to
$k(x^{\sharp}, y, p)^{\sharp} = x^{\sharp}$. By the lexness of $\sharp$, it
suffices to give an element of $\sharp(k(x^{\sharp}, y, p) = x)$, and for this
it suffices to give an element of $k(x^{\sharp}, y, p) = x$ under the hypotheses
that $x$, $y$, and $p$ are crisp. In this case,
$(x^{\sharp}, y, p)^{\flat} : \flat( \sharp X \times_{\sharp Y} Y)$, and so we
have that $k(x^{\sharp}, y, p) = F((x^{\sharp}, y, p)^{\flat})$ by the upper
triangle. But by definition,
$F((x^{\sharp}, y, p)^{\flat}) \defeq x^{\sharp}{}_{\sharp} \defeq x$, so that we
have succeeded in giving the desired identification.

We have shown that $k$ lives over the naturality square; now we need to show
that it splits the gap map $X \to \sharp X
\times_{\sharp Y} Y$. To that end, consider the following diagram:
\[
  \begin{tikzcd}
    {\flat X} & {\flat(\sharp X \times_{\sharp Y} Y)} & X \\
    X & {\sharp X \times_{\sharp Y} Y} & Y
    \arrow["{\type{gap}}" swap, from=2-1, to=2-2]
    \arrow["{\flat\type{gap}}" swap, from=1-1, to=1-2]
    \arrow[from=1-2, to=1-3]
    \arrow["{\varepsilon}", bend left, from=1-1, to=1-3]
    \arrow["{\varepsilon}"', from=1-1, to=2-1]
    \arrow[from=2-2, to=2-3]
    \arrow["f", from=1-3, to=2-3]
    \arrow["f"', bend right, from=2-1, to=2-3]
    \arrow[from=1-2, to=2-2]
    \arrow["k", dashed, from=2-2, to=1-3]
  \end{tikzcd}
\]
  Showing that the diagram commutes as drawn follows easily by
  $\flat$-induction. We then have two crisp fillers of the outer square:
  first we have $\id_X : X \to X$ and $k \circ \type{gap} : X \to X$. By the
  uniqueness of crisp fillers, we conclude that these must be identical.
\end{proof}

Knowing that the crisp $\sharp$-modal maps may be characterized by lifting
crisply against $\flat$-counits suggests that we could axiomatize the particular
qualities of $\sharp$ by restricting the class of $\flat$-counits which it
suffices to lift against. To that end, we make the following definition.

\begin{defn}
  Let $G :: I \to \Type$ be a crisp type family indexed by a
  $\flat$-modal and inhabited type $I$. We say that
  $G$ \emph{detects continuity} when, for every crisp map $f :: X \to Y$,
  \[
  \begin{tikzcd}
    \left\{ \text{$f$ is $\sharp$-modal} \right\} \ar[r, leftrightarrow] & {\left\{   \makecell[c]{(\varepsilon : \flat G_i \to G_i) \perp_{\flat} f \\ \text{for all $i :: I$}} \right\}}
  \end{tikzcd}
  \]
\end{defn}

\begin{rmk}
Thinking externally, it is straightforward to see that any family $G_i$ which
generates a local topos $\Ea$ in question under colimits will detect continuity
for the focus given by the terminal map of toposes. This is because $\flat$, as a left adjoint, commutes with all
colimits; therefore the problem of lifting against the $\flat$-counit of any
object of $\Ea$ can be reduced to that of lifting against the $\flat$-counits of
the generators $G_i$.
\end{rmk}

\begin{lem}
  A crisp type $X$ is $\sharp$-modal if and only if $\flat(A \to X) \to
  \flat(\flat A \to X)$ is an equivalence for all crisp types $A$, and if $G$
  detects continuity then it suffices to check for each $G_i$.
\end{lem}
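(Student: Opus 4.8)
The plan is to read this off \cref{thm:sharp.modal.map.lifting} by applying it to the map $!_X : X \to 1$ with terminal codomain. First I would observe that a crisp type $X$ is $\sharp$-modal if and only if the unique crisp map $!_X : X \to 1$ is $\sharp$-modal. Indeed, since $\sharp$ is lex we have $\sharp 1 \simeq 1$, so the $\sharp$-naturality square of $!_X$ has top edge the unit $X \to \sharp X$ and bottom edge the equivalence $1 \simeq \sharp 1$; it is therefore a pullback exactly when $X \to \sharp X$ is an equivalence. Applying \cref{thm:sharp.modal.map.lifting} to $f :\defeq (!_X : X \to 1)$ then gives: $X$ is $\sharp$-modal if and only if $(\varepsilon : \flat A \to A) \perp_{\flat} (!_X)$ for every crisp type $A$.

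It remains to unfold \cref{defn:crisp.lifting} in this special case. Taking $c :\defeq (\varepsilon : \flat A \to A)$ --- so the roles of ``$A$'' and ``$B$'' in the definition are played by $\flat A$ and $A$ --- together with $f :\defeq (!_X : X \to 1)$, and using $X^A = (A \to X)$ and $X^{\flat A} = (\flat A \to X)$, the square whose being a pullback is the content of $c \perp_{\flat} f$ is
\[
  \begin{tikzcd}
    {\flat(A \to X)} & {\flat(\flat A \to X)} \\
    {\flat(A \to 1)} & {\flat(\flat A \to 1)}
    \arrow["{\flat(-\circ\varepsilon)}", from=1-1, to=1-2]
    \arrow["{\flat(!_X\circ-)}"', from=1-1, to=2-1]
    \arrow["{\flat(!_X\circ-)}", from=1-2, to=2-2]
    \arrow["{\flat(-\circ\varepsilon)}"', from=2-1, to=2-2]
  \end{tikzcd}
\]
Now $A \to 1$ and $\flat A \to 1$ are both contractible, so the bottom map is $\flat$ applied to an equivalence, hence itself an equivalence; therefore the square is a pullback precisely when the top map --- the canonical map $\flat(A \to X) \to \flat(\flat A \to X)$ of the statement, induced by precomposition with the counit $\varepsilon : \flat A \to A$ --- is an equivalence. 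Chaining this with the equivalences from the previous paragraph proves the first claim.

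For the last sentence, suppose $G :: I \to \Type$ detects continuity. Then by definition $!_X$ is $\sharp$-modal if and only if $(\varepsilon : \flat G_i \to G_i) \perp_{\flat} (!_X)$ for all $i :: I$, and the same unfolding of \cref{defn:crisp.lifting} --- now with the generic crisp type taken to be $G_i$ --- rewrites this condition as: $\flat(G_i \to X) \to \flat(\flat G_i \to X)$ is an equivalence for every $i :: I$. So under the hypothesis that $G$ detects continuity it suffices to check the condition on the family $G$.

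This is essentially a corollary of \cref{thm:sharp.modal.map.lifting}, so there is no serious obstacle. The only things needing care are the bookkeeping of which edges of the square of \cref{defn:crisp.lifting} are horizontal versus vertical when the left-hand map is a $\flat$-counit, and the observation that $A \to 1$ and $\flat A \to 1$ are contractible --- the resulting collapse of the bottom edge is exactly what turns ``the square is a pullback'' into ``the top map is an equivalence''.
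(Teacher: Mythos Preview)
Your proof is correct and takes the same approach as the paper: specialize \cref{thm:sharp.modal.map.lifting} to $f = (!_X : X \to 1)$ and observe that, since the bottom row of the crisp-lifting square collapses to contractible types, the pullback condition reduces to the top map $\flat(A \to X) \to \flat(\flat A \to X)$ being an equivalence. The paper's proof is simply the one-line version of what you have written out in full.
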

\begin{proof}
  When $f : X \to 1$, the square defining crisp lifting is a pullback iff the
  top map is an equivalence.
\end{proof}

If a family detects continuity, then it is a separating family for
crisp maps in the following precise sense.

\begin{thm}\label{thm:equivalence.on.generators.gives.equivalence}
Suppose that $G :: I \to \Type$ detects continuity. Let $f
:: X \to Y$ be a crisp map for which $\flat f :
\flat X \to \flat Y$ is an equivalence and for all $i :: I$, the induced
map $\flat(G_i \to X) \to \flat(G_i \to Y)$ given by
post-composing with $f$ is an equivalence. Then $f$ is an equivalence.
\end{thm}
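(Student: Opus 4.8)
The plan is to show that $f$ is $\sharp$-modal and then to promote this, together with the hypothesis on $\flat f$, to full invertibility of $f$. For the first step I would invoke the assumption that $G$ detects continuity: it suffices to prove $(\varepsilon : \flat G_i \to G_i) \perp_{\flat} f$ for every $i :: I$. Unwinding \cref{defn:crisp.lifting} with $c := \varepsilon_{G_i}$, this asks that the square with corners $\flat(X^{G_i})$, $\flat(X^{\flat G_i})$, $\flat(Y^{G_i})$, $\flat(Y^{\flat G_i})$ --- horizontal maps $\flat$ of precomposition with $\varepsilon_{G_i}$, vertical maps $\flat$ of postcomposition with $f$ --- be a pullback. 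Since a commuting square both of whose parallel edges are equivalences is automatically a pullback, it is enough to check that the two vertical maps are equivalences.

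The left vertical map $\flat(X^{G_i}) \to \flat(Y^{G_i})$ is an equivalence: this is exactly the hypothesis that $\flat(G_i \to X) \to \flat(G_i \to Y)$ is an equivalence. For the right vertical map $\flat(X^{\flat G_i}) \to \flat(Y^{\flat G_i})$, I would use that $\flat G_i$ is crisp and discrete, so that by the coreflection property of $\flat$ --- a consequence of crisp $\flat$-induction, as in \emph{Real Cohesion} --- postcomposition with the $\flat$-counit induces an equivalence $\flat(\flat G_i \to \flat Z) \xrightarrow{\sim} \flat(\flat G_i \to Z)$ for any crisp $Z$. Under these equivalences, and using naturality of the counit in the form $f \circ \varepsilon_X = \varepsilon_Y \circ \flat f$, the right vertical map is identified with $\flat(\flat G_i \to \flat f) : \flat(\flat G_i \to \flat X) \to \flat(\flat G_i \to \flat Y)$, which is an equivalence because $\flat f : \flat X \to \flat Y$ is. Hence both verticals are equivalences, the square is a pullback, $\varepsilon_{G_i} \perp_{\flat} f$ holds for all $i :: I$, and therefore $f$ is $\sharp$-modal.

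It remains to upgrade $\sharp$-modality to an equivalence. Since $\flat f$ is an equivalence, so is $\sharp \flat f$; and since each $\flat$-counit $\varepsilon_A : \flat A \to A$ is a $\sharp$-equivalence by \cite[Theorem 6.22]{mike:real-cohesive-hott}, naturality gives $\sharp f = \sharp \varepsilon_Y \circ \sharp \flat f \circ (\sharp \varepsilon_X)^{-1}$, so $\sharp f$ is an equivalence. Because $f$ is $\sharp$-modal, its $\sharp$-naturality square exhibits $X \xrightarrow{\sim} \sharp X \times_{\sharp Y} Y$; base-changing the equivalence $\sharp f$ along $Y \to \sharp Y$ shows the projection $\sharp X \times_{\sharp Y} Y \to Y$ is an equivalence, and its composite with the gap equivalence is $f$ itself. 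Hence $f$ is a composite of equivalences, and so an equivalence.

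The main obstacle I anticipate is the analysis of the right vertical map: pinning down the coreflection equivalence $\flat(\flat G_i \to Z) \simeq \flat(\flat G_i \to \flat Z)$ for discrete crisp domains, and verifying that it intertwines $\flat$ of postcomposition-with-$f$ with $\flat$ of postcomposition-with-$\flat f$ via the counit naturality square. Everything else is formal manipulation with the already-established properties of $\flat$, $\sharp$, and crisp lifting.
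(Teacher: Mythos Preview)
Your proposal is correct and follows essentially the same strategy as the paper: show $f$ is $\sharp$-modal by verifying crisp lifting against the $\flat$-counits of the $G_i$ (both verticals of the lifting square being equivalences), then combine $\sharp$-modality with $\sharp f$ being an equivalence. The only minor variation is in handling the right vertical $\flat(X^{\flat G_i}) \to \flat(Y^{\flat G_i})$: you use the $\flat$-coreflection property to identify it with postcomposition by $\flat f$, whereas the paper uses the $\flat \dashv \sharp$ adjunction (\cite[Corollary~6.26]{mike:real-cohesive-hott}) to identify it with postcomposition by $\sharp f$---either route works.
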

\begin{proof}
First, note that $f$ is a $\sharp$-equivalence since it is by
hypothesis a $\flat$-equivalence. It therefore suffices to show that
$f$ is $\sharp$-modal, which by the assumption that $G$ detects
continuity means showing that $f$ lifts crisply against
all $\flat$-counits $\flat G_i \to G_i$ for $i
:: I$. Consider the following diagram:
\[
  \begin{tikzcd}
    {\flat(X^{G_i})} & {\flat(X^{\flat G_i})} & {\flat(\sharp X^{G_i})} \\
    {\flat(Y^{G_i})} & {\flat(Y^{\flat G_i})} & {\flat(\sharp Y^{G_i})}
    \arrow[from=1-1, to=1-2]
    \arrow[from=2-1, to=2-2]
    \arrow["{\flat(f\circ)}"', from=1-1, to=2-1]
    \arrow["{\flat(f\circ)}", from=1-2, to=2-2]
    \arrow["\sim", from=1-2, to=1-3]
    \arrow["\sim", from=2-2, to=2-3]
    \arrow["{\flat(\sharp f\circ)}", from=1-3, to=2-3]
  \end{tikzcd}
\]
The square on the left is the one we need to show is a pullback. For this, it
will suffice to show that the middle map
$\flat(f\circ) : \flat(X^{\flat G_i}) \to \flat(Y^{\flat G_i}) $ is an
equivalence, since the leftmost vertical map is an equivalence by hypothesis and
any square with two sides equivalences is a pullback.

For that, the middle vertical map is equivalent by the adjunction
$\flat \dashv \sharp$ to the rightmost vertical map (\cite[Corollary
6.26]{mike:real-cohesive-hott}). But the rightmost vertical map is an equivalence
because it is post-composition by the equivalence $\sharp f$.
\end{proof}

\subsection{Detecting Connectivity}

A focus is said to be \emph{cohesive} if $\flat$ has a
further left adjoint $\shape$ which is itself a modality:
\[
\flat(\shape A \to X) \equiv \flat(A \to
\flat X).
\]

This adjunction only determines $\shape$ for crisp types. It
is better to define $\shape$ by nullifying a family of objects; then
$\shape$ is determined for all types (of any size). To this end, we
make the following definition.

\begin{defn}\label{defn:detect.connectivity}
  Let $G :: I \to \Type$ be a crisp type family indexed by a
  $\flat$-modal type. We say that $G$ \emph{detects connectivity} when, for any crisp type $X$,
  \[
  \begin{tikzcd}
    \left\{ \text{$X$ is $\flat$-modal} \right\} \ar[r, leftrightarrow] & {\left\{   \makecell[c]{\text{$X$ is $G_i$-null}  \\ \text{for all $i :: I$}} \right\}}
  \end{tikzcd}
  \]

  If $G$ detects connectivity, then $\shape$ is defined to be
  nullification at the family $G_i$.
\end{defn}

\begin{rmk}
  In \emph{Real Cohesion} \cite{mike:real-cohesive-hott}, the assertion that a given
  family $G$ detects connectivity is known as Axiom C0. In
  \cite[Definition 5.2.48]{urs:diff-coh}, a single object with this property is
  said to `exhibit the cohesion'.
\end{rmk}

If there is a family $G$ which detects connectivity, then we say that
the focus is \emph{cohesive}. This is justified by the following theorem, which
we may import directly from \emph{Real Cohesion} \cite{mike:real-cohesive-hott}.
\begin{thm}
Suppose that $G$ detects connectivity.  Then a crisp
type is $\shape$-modal if and only if it is $\flat$-modal,
and furthermore $\shape$ is crisply left-adjoint to $\flat$:
\[
\flat(\shape A \to X) \to \flat (A \to \flat X)
\]
\end{thm}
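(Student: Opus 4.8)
The plan is to deduce both assertions from the definition of $\shape$ as nullification at the family $(G_i)_{i::I}$ together with the hypothesis that $G$ detects connectivity, reproducing Shulman's argument from \emph{Real Cohesion} \cite{mike:real-cohesive-hott} in the present multi-focus setting. The first assertion is essentially immediate: a type is $\shape$-modal exactly when it is $G_i$-null for every $i :: I$ (that is what nullification at the family means), while the hypothesis that $G$ detects connectivity says precisely that, for crisp $X$, being $G_i$-null for all $i$ is equivalent to being $\flat$-modal. Hence the crisp $\shape$-modal types and the crisp $\flat$-modal types coincide. From this I would extract two consequences to use below: (i) for crisp $A$, the type $\shape A$ is crisp (nullification at a crisp family preserves crispness) and $\shape$-modal, hence $\flat$-modal, i.e. discrete; and (ii) for crisp $X$, the type $\flat X$ is crisp and $\flat$-modal, hence by the other implication of the hypothesis it is $G_i$-null, hence $\shape$-modal.

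For the adjunction I would realize the displayed map as a composite of two crisp equivalences, for fixed crisp $A$ and $X$. The first,
\[
  \flat\bigl((\shape A) \to \flat X\bigr) \;\xrightarrow{\ \sim\ }\; \flat\bigl(A \to \flat X\bigr),
\]
is precomposition with the nullification unit $\eta_A : A \to \shape A$; it is an equivalence by the (crisp) universal property of nullification, since $\flat X$ is $\shape$-modal by consequence~(ii). The second,
\[
  \flat\bigl((\shape A) \to X\bigr) \;\xrightarrow{\ \sim\ }\; \flat\bigl((\shape A) \to \flat X\bigr),
\]
is induced by the $\flat$-counit $\varepsilon_X : \flat X \to X$; it is an equivalence because $\shape A$ is discrete by consequence~(i) and $\flat$ is a lex coreflection, so the inclusion of discrete types being left adjoint to $\flat$ gives $\flat(D \to X) \simeq \flat(D \to \flat X)$ for crisp discrete $D$ — one of the discreteness lemmas carried over verbatim from \cite{mike:real-cohesive-hott}. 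Composing the second equivalence with the first yields a crisp equivalence $\flat\bigl((\shape A) \to X\bigr) \xrightarrow{\ \sim\ } \flat\bigl(A \to \flat X\bigr)$, natural in crisp $A$ and $X$, which one checks agrees with the canonical map sending a crisp $f : \shape A \to X$ to the corestriction of $A \xrightarrow{\eta_A} \shape A \xrightarrow{f} X$ through $\flat X$. That natural crisp equivalence is exactly the assertion that $\shape$ is crisply left-adjoint to $\flat$.

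The main obstacle is not any individual step but the bookkeeping of crispness: one must confirm that the universal property of nullification, the adjunction $\flat \dashv \sharp$, and the ``maps out of a discrete type'' lemma are all available in their \emph{crisp} forms — as statements about $\flat$ of internal function types, natural in the crisp arguments — and that the nullification defining $\shape$ really does restrict to the $\focusA$-crisp fragment, which it does since it is nullification at a $\focusA$-crisp family. Once that is in place, every equivalence above is a verbatim instance of the corresponding fact in \emph{Real Cohesion}, with ``crisp'' read as ``$\focusA$-crisp'' and $\flat,\sharp,\shape$ carrying their $\focusA$ subscripts; since adding further focuses never restricts the rules governing $\focusA$, none of Shulman's arguments is obstructed and the theorem may indeed be imported directly.
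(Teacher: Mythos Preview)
Your proposal is correct and takes essentially the same approach as the paper: the paper's proof is the single line ``This is \cite[Theorem 9.15]{mike:real-cohesive-hott},'' relying on the earlier observation that Shulman's arguments carry over verbatim to any single focus, and your proposal explicitly frames itself as reproducing that argument and concludes that ``the theorem may indeed be imported directly.'' You simply fill in more of the details of Shulman's proof than the paper does; one minor slip is that the map you describe as ``induced by the $\flat$-counit $\varepsilon_X$'' actually goes in the opposite direction (postcomposition with $\varepsilon_X$ maps $(\shape A \to \flat X) \to (\shape A \to X)$), but since you are asserting it is an equivalence this is harmless.
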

\begin{proof}
This is \cite[Theorem 9.15]{mike:real-cohesive-hott}.
\end{proof}

\section{Examples of Focuses}\label{sec:single.examples}

To keep the various operators visually distinct, we will use
completely different symbols for each focus we are interested in. The
rules governing the type formers are unchanged.

\begin{itemize}
\item $\shape \dashv \flat \dashv \sharp$ denotes \emph{real cohesion}, where a
  set of real numbers (possible the Dedekind reals or an axiomatically asserted
  set of ``smooth reals'') detects connectivity.

\item $\geomreal \dashv \sk \dashv \csk$ denotes \emph{simplicial cohesion},
  where the (axiomatically asserted) $1$-simplex $\Delta[1]$ detects connectivity.

\item $\orbishape \dashv \orbiflat \dashv \orbisharp$ denotes \emph{global
    equivariant cohesion}, where connectivity is detected by $\orbisharp \B G$ for
  finite groups $G$. This notational convention follows Sati and Schreiber
  \cite{sati-schreiber:proper-orbifold-cohomology}.

\item Various \emph{topological toposes} exhibit spatial type theory, with
  $\flat \dashv \sharp$ retopologizing types with the discrete and codiscrete
  topologies respectively. In particular, Johnstone's topological topos has a
  focus whose continuity is detected by the walking convergent sequence
  $\Nb_{\infty}$, which may be constructed as the set of monotone functions
  $\Nb \to \{0, 1\}$.
\end{itemize}

\subsection{Real Cohesions}

In \emph{Real Cohesion} \cite{mike:real-cohesive-hott}, Shulman
gives the axiom $\Rb\flat$ which states that a crisp type is $\flat$-modal if
and only if it is $\Rb_D$-null, where $\Rb_D$ is the set of Dedekind cuts. In the
terminology of \cref{defn:detect.connectivity}, this says that $\Rb_D$ detects
continuous real connectivity.

\begin{axiom}[Continuous Real Cohesion]
We assume that $\Rb_D$ detects continuous real connectivity, and also Shulman's Axiom
T: For every $x : \Rb_D$, the proposition $(x > 0)$ is $\sharp$-modal.
\end{axiom}

\begin{rmk}
Though Shulman does not consider this axiom, we may also add the assumption that the
family $\Rb_D^n$ detects continuous real continuity. Using this assumption, we
may internalize the arguments of Example 8.33 of \cite{mike:real-cohesive-hott} to
show that the mysterious Axiom T follows from the proposition that if $f
:: \Rb_D^n \to \Rb_D$ is crisp and $f(x) > 0$ for any crisp $x :: \Rb_D^n$,
then in fact $f(x) > 0$ for all (not necessarily crisp) $x :
\Rb_D^n$. Since, by
Corollary 8.28 of
\cite{mike:real-cohesive-hott} (assuming the crisp LEM or the
axiom of countable choice), any crisp Dedekind real is
a Cauchy real, we are equivalently
asking if a function $f : \Rb_D \to
\Rb_D$ is positive on all Cauchy
reals, is it always positive. This
seems obvious, but as Shulman notes
in Example 8.34, this obvious
statement is not always true; though
assuming countable choice it is
likely provable.
\end{rmk}

There are continuous but non-differentiable functions $f : \Rb_D \to \Rb_D$. If
we want to work in a topos where the types have a \emph{smooth} structure
instead of just a continuous structure, then we must work with a type of
\emph{smooth} reals $\Rb_S$. The most common way to axiomatize the type of
smooth reals is using the Kock-Lawvere axiom and the other axioms of synthetic
differential geometry. See, e.g. Section 4.1 of \cite{jaz:orbifolds} for a list
of these axioms. In any case, if $\Rb_S$ is a type of smooth reals, then we will
take differential cohesion to mean that $\Rb_S$ detects connectivity.

\begin{axiom}[Differential Real Cohesion]
If $\Rb_S$ is a type of smooth reals (say, from synthetic differential
geometry), then we assume that $\Rb_S$ detects differential connectivity.
\end{axiom}

\subsection{Simplicial Cohesion}

There is a well known difficulty in describing simplicial types in ordinary
homotopy type theory --- the infinite amount of coherence data is
difficult to describe formally given the tools of type theory. This difficulty
has led to extensions of type theory such as two-level type theories \cite{ack:hott-strict-equality, two-level-tt, univalence-principle}
which augment HoTT with strict equalities which can be use to define simplicial
homotopy types satisfying the simplicial identities strictly, bypassing the
problematic tower of coherences.

Another approach to avoiding simplicial difficulties is to simply interpret type
theory into a topos of \emph{simplicial} homotopy types, rather than mere homotopy
types. This is the approach taken by Riehl and Shulman in \cite{rs:synthetic-cats}, where they
present a type theory that makes every type into a simplicial
type and has as primitives the simplices $\Delta[n]$, so that a simplex in a
type $A$ is a function $\sigma : \Delta[n] \to A$.

In this section we will also work with simplicial homotopy types, but by
different means to Riehl and Shulman's type theory. Instead, we will describe
\emph{simplicial cohesion}, with adjoint modalities
$\geomreal{} \dashv \sk \dashv \csk$. These are defined semantically as follows:

\begin{itemize}
        \item The (``simplicial flat'') $0$-skeletal comodality $X \mapsto \sk X$ sends a simplicial type to its $0$-skeleton:
        \[
        \begin{tikzcd}[row sep = small]
          \overset{\raisebox{4pt}{\vdots}}{X_{2}}
          \ar[d, leftarrow, shift left = 10]
          \ar[d, shift left = 15]
          \ar[d, shift left = 5]
          \ar[d, leftarrow]
          \ar[d, shift right = 5]
          \ar[d, leftarrow, shift right = 10]
          \ar[d, shift right = 15]\\
          X_{1}
          \ar[d, shift left = 5]
          \ar[d, leftarrow]
          \ar[d, shift right = 5] \\
          X_{0}
        \end{tikzcd}
        \quad\quad\xmapsto{\quad\sk\quad} \quad\quad
        \begin{tikzcd}[row sep = small]
          \overset{\raisebox{4pt}{\vdots}}{X_{0}}\ar[d, leftarrow, shift left = 10]\ar[d, shift left = 15] \ar[d, shift left = 5] \ar[d, leftarrow] \ar[d, shift right = 5]\ar[d, leftarrow, shift right = 10]\ar[d, shift right = 15]\\
          X_{0}\ar[d, shift left = 5] \ar[d, leftarrow] \ar[d, shift right = 5] \\
          X_{0}
        \end{tikzcd}
        \]
       \item The (``simplicial sharp'') $0$-coskeletal modality $X \mapsto \csk X$ sends a simplicial type to its $0$-coskeleton:
        \[
        \begin{tikzcd}[row sep = small]
          \overset{\raisebox{4pt}{\vdots}}{X_{2}}\ar[d, leftarrow, shift left = 10]\ar[d, shift left = 15] \ar[d, shift left = 5] \ar[d, leftarrow] \ar[d, shift right = 5]\ar[d, leftarrow, shift right = 10]\ar[d, shift right = 15]\\
          X_{1}\ar[d, shift left = 5] \ar[d, leftarrow] \ar[d, shift right = 5] \\
          X_{0}
        \end{tikzcd}
        \quad\quad\xmapsto{\quad\csk\quad} \quad\quad
        \begin{tikzcd}[row sep = small]
          \overset{\raisebox{4pt}{\vdots}} {X_{0} \times X_{0} \times X_{0}}\ar[d, leftarrow, shift left = 10]\ar[d, shift left = 15] \ar[d, shift left = 5] \ar[d, leftarrow] \ar[d, shift right = 5]\ar[d, leftarrow, shift right = 10]\ar[d, shift right = 15]\\
          X_{0} \times X_{0}\ar[d, shift left = 5] \ar[d, leftarrow] \ar[d, shift right = 5] \\
          X_{0}
        \end{tikzcd}
        \]

        \item The (``simplicial shape'') realization modality $X \mapsto \geomreal{X}$ sends a simplicial type to its realization (or homotopy colimit), considered as a $0$-skeletal simplicial type:
        \[
        \begin{tikzcd}[row sep = small]
          \overset{\raisebox{4pt}{\vdots}}{X_{2}}\ar[d, leftarrow, shift left = 10]\ar[d, shift left = 15] \ar[d, shift left = 5] \ar[d, leftarrow] \ar[d, shift right = 5]\ar[d, leftarrow, shift right = 10]\ar[d, shift right = 15]\\
          X_{1}\ar[d, shift left = 5] \ar[d, leftarrow] \ar[d, shift right = 5] \\
          X_{0}
        \end{tikzcd}
        \quad\quad\xmapsto{\quad\geomreal{\cdot}\quad} \quad\quad
        \begin{tikzcd}[row sep = small]
          \overset{\raisebox{4pt}{\vdots}}{\colim X_{n}} \ar[d, leftarrow, shift left = 10]\ar[d, shift left = 15] \ar[d, shift left = 5] \ar[d, leftarrow] \ar[d, shift right = 5]\ar[d, leftarrow, shift right = 10]\ar[d, shift right = 15]\\
       \colim X_{n}  \ar[d, shift left = 5] \ar[d, leftarrow] \ar[d, shift right = 5] \\
         \colim X_{n}
        \end{tikzcd}
        \]
\end{itemize}

Because simplicial sets are the
classifying ($0$-)topos for strict intervals (totally ordered sets with distinct
top and bottom elements) \cite{wraith:generic-interval}, and since the $\infty$-topos of simplicial
homotopy types is the \emph{enveloping $\infty$-topos}\footnote{The enveloping
  $\infty$-topos of a topos is its free (homotopy) cocompletion, fixing existing
  homotopy colimits.} of simplicial sets \cite{anel:enveloping-topos}, we
may assume the existence of an interval $\Delta[1]$ to make sure that our type
theory is interpreted in an $\infty$-topos equipped with a geometric morphism to
$\Sa^{\Delta\op}$. We may then define the $n$-simplex $\Delta[n]$ to be the
$n$-length increasing sequences in $\Delta[1]$, and define an $n$-simplex in a
type $X$ to be a map $\Delta[n] \to X$.\

\begin{axiom}[Simplicial Axioms]
  We presume that $\Delta[1]$ is a total order with distinct top and bottom elements which we call $1$ and $0$ respectively. Explicitly, this means that we have elements $0,\, 1 : \Delta[1]$ and a proposition $x \leq y : \Prop$ for $x,\, y : \Delta[1]$. This order must satisfy the following axioms:
  \begin{enumerate}
  \item For all $x$, $x \leq x$.
  \item For all $x$, $y$, and $z$, if $x \leq y$ and $y \leq z$ then $x \leq z$.
  \item For all $x$ and $y$, if $x \leq y$ and $y \leq x$, then $x = y$.
  \item For all $x$, $y$, either $x \leq y$ or $y \leq x$.
  \item For all $x$, $0 \leq x$ and $x \leq 1$.
  \item $0 \neq 1$.
  \end{enumerate}
  From these axioms, we may define the other simplices $\Delta[n]$
  to be the chains of length $n$ in $\Delta[1]$:
  $$\Delta[n] :\defeq \{\vec{x} : \Delta[1]^{n} \mid x_{1} \leq x_{2} \leq \cdots \leq x_{n}\}.$$

  We also assume the following:
  \begin{itemize}
  \item (Axiom $\Delta\sk$) $\Delta[1]$ detects simplicial connectivity: a
    simplicially crisp type $X$ is $0$-skeletal if and only if every map
    $\Delta[1] \to X$ is constant.
  \item The family $\Delta[-] : \Nb \to\Type$ detects simplicial continuity.
  \item (Axiom $\partial\Delta$) For $i : \Delta[1]$, we have $\csk((i = 0) \vee (i = 1))$.
  \item Each $\Delta[n]$ is crisply projective. That is, for a simplicially crisp $E :
      \Delta[n] \to \Type$, we have a map
      \[
        \sk(\dprod{i : \Delta[n]} \exists E_i) \to \exists \sk(\dprod{i : \Delta[n]} E_i).
      \]
      As there is an obvious map the other way, this map is an equivalence.
  \end{itemize}
\end{axiom}

Let us quickly set the stage by proving that the $n$-simplices have trivial
geometric realization and
the $0$-skeleton of the $n$-simplex
$\Delta[n]$ is the ordinal $[n] \defeq \{0, \ldots, n\}$.

\begin{lem}
The order $\Delta[1]$ has finite meets and joins, and they distribute over
each other. Moreover, the inclusion $\{0, 1\} \hookrightarrow \Delta[1]$ is an
inclusion of lattices.
\end{lem}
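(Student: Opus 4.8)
The plan is to use that $\Delta[1]$ is a \emph{linear} order, so that binary meets and joins are just minima and maxima, computed by casing on the totality axiom; the only real care needed is that totality is a mere disjunction, so every such case split must have a proposition as its goal. I would first record that $\Delta[1]$ is a set: the relation $R(x,y) :\defeq (x \leq y) \wedge (y \leq x)$ is a reflexive mere relation (reflexivity of $\leq$; and $\leq$ is $\Prop$-valued) that implies identity (antisymmetry), so $\Delta[1]$ is a set by the standard lemma that any type carrying a reflexive mere relation implying identity is a set.

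Next I would build the meet $x \wedge y$ through its universal property rather than by a naive definition. The type $\sum_{m : \Delta[1]} \prod_{z : \Delta[1]} \big( (z \leq m) \leftrightarrow (z \leq x) \wedge (z \leq y) \big)$ is a proposition --- any two witnesses have equal first component by antisymmetry, and the second components then agree since they are props --- so I may prove it inhabited by eliminating the mere disjunction of $x \leq y$ and $y \leq x$ supplied by totality: take $m :\defeq x$ when $x \leq y$ (using transitivity for the forward implication) and $m :\defeq y$ when $y \leq x$, symmetrically. Setting $z :\defeq m$ shows $x \wedge y$ is a lower bound, and the stated property makes it the greatest one. The join $x \vee y$ is constructed dually via $\prod_z \big( (m \leq z) \leftrightarrow (x \leq z) \wedge (y \leq z) \big)$. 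The nullary meet and join are the elements $1$ and $0$, which are the top and bottom by the axioms $0 \leq x$, $x \leq 1$.

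For distributivity it suffices, by the standard fact that in any lattice one distributive law implies the other, to prove $x \wedge (y \vee z) = (x \wedge y) \vee (x \wedge z)$. This is an identity in a set, hence a proposition, so I may case on totality of $\{y, z\}$: if $y \leq z$ then $y \vee z = z$ and $x \wedge y \leq x \wedge z$, so both sides equal $x \wedge z$; the case $z \leq y$ is symmetric. (The inclusion $(x \wedge y) \vee (x \wedge z) \leq x \wedge (y \vee z)$ needs no case split, holding in every lattice.) Finally, since $0 \neq 1$ and $\Delta[1]$ is a set, $\{0,1\} \defeq \{ x : \Delta[1] \mid (x = 0) \vee (x = 1) \}$ is a two-element subtype with bottom $0$ and top $1$; its subtype inclusion preserves $\leq$ tautologically, and preserves $\wedge$ and $\vee$ because by the previous step $0 \wedge 1 = 0$ and $0 \vee 1 = 1$ in $\Delta[1]$ (both from $0 \leq 1$), matching the meet and join computed inside $\{0,1\}$; it also sends bottom to bottom and top to top, so it is a homomorphism of bounded lattices.

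The main obstacle is not mathematical depth but discipline about the propositional truncation hidden in the totality axiom: every argument that "cases on whether $x \leq y$ or $y \leq x$" must have a proposition as its conclusion, which is exactly why I introduce $x \wedge y$ and $x \vee y$ through their (automatically propositional) universal properties rather than by a direct case definition, and why one must know $\Delta[1]$ is a set before forming the subtype $\{0,1\}$ and before phrasing distributivity as an equality.
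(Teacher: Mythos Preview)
Your proof is correct and is, if anything, more scrupulous than the paper's. The paper proceeds more naively: it defines $x \wedge y$ and $x \vee y$ directly by casing on totality (noting that when both $x \leq y$ and $y \leq x$ hold the two branches agree by antisymmetry), and then verifies distributivity by brute force, checking each of the six possible total orderings of $x,y,z$. The paper does not pause to verify that $\Delta[1]$ is a set, nor does it route the definitions through universal properties to ensure the case-split target is a proposition; your version makes these points explicit, which is the honest thing to do in HoTT when totality is only a mere disjunction. Your distributivity argument is also tidier, needing only one case split on the order of $y$ and $z$ rather than six. What the paper's approach buys is brevity and directness for a reader willing to take the set-level bookkeeping for granted; what yours buys is a proof that actually type-checks without that trust.
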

\begin{proof}
Suppose that $x, y : \Delta[1]$. Then either $x \leq y$ or $y \leq x$. In the
former case, define $x \wedge y :\defeq x$ and $x \vee y :\defeq y$, and in the
latter case $x \wedge y :\defeq y$ and $x \vee y
:\defeq x$. If both hold, then $x = y$ and the definitions agree.

If $x, y, z : \Delta[1]$, then these three may find themselves in any of $6$
orderings. One may then check that in each of these cases, meets distribute over
joins and vice versa. For example, supposing that $x \leq y \leq z$, then $x
\wedge (y \vee z) = x \wedge z = x$, while $(x \wedge y) \vee (x \wedge z) = x
\vee x = x$.
\end{proof}

\begin{lem}\label{lem:delta.n.retract}
  The $n$-simplex $\Delta[n]$ is a retract of the $n$-cube $\Delta[1]^n$.
  Moreover, the inclusion $[n] \hookrightarrow \Delta[n]$ given by
\[
  i \mapsto
  0 \leq \cdots \leq 0 \leq \overbrace{1 \leq \cdots \leq 1}^{\mbox{$i$ times}}
\]
is a retract of the inclusion $\{0,1\}^n \to \Delta[1]^n$.
\end{lem}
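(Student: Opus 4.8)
The plan is to exhibit one ``sorting'' retraction $r : \Delta[1]^n \to \Delta[n]$, built entirely from the binary meet and join operations furnished by the preceding lemma, and then to observe that this \emph{same} $r$ restricts to a retraction $\{0,1\}^n \to [n]$, which gives the ``moreover'' clause essentially for free.

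Concretely, for $\vec x = (x_1,\dots,x_n) : \Delta[1]^n$ and $1 \le k \le n$ I would set
\[
  r(\vec x)_k \;:\defeq\; \bigwedge_{\substack{S \subseteq \{1,\dots,n\}\\ |S| = k}}\ \bigvee_{i \in S} x_i,
\]
the $k$-th order statistic of $(x_1,\dots,x_n)$, which is well-defined because $\Delta[1]$ has all finite meets and joins. The point of this closed form is that it uses only the lattice operations and never case-splits on the merely-propositional disjunction $x \le y \vee y \le x$, so it is a bona fide term of the type theory. To see that $r(\vec x)$ genuinely lands in $\Delta[n]$, i.e.\ $r(\vec x)_k \le r(\vec x)_{k+1}$: for any size-$(k+1)$ set $S$ and any $i_0 \in S$, the set $S\setminus\{i_0\}$ has size $k$, so $r(\vec x)_k \le \bigvee_{i\in S\setminus\{i_0\}} x_i \le \bigvee_{i \in S} x_i$; taking the meet over all such $S$ gives the claim. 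Together with the identity computation below, this already proves the first sentence of the lemma.

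For the retraction equations I would compute two things. First, if $x_1 \le \dots \le x_n$ then $\bigvee_{i\in S} x_i = x_{\max S}$, and as $S$ ranges over size-$k$ subsets the index $\max S$ attains its least value $k$ at $S = \{1,\dots,k\}$, so by monotonicity $r(\vec x)_k = x_k$; hence $r$ restricts to the identity on $\Delta[n]$. Second, for $\vec\epsilon : \{0,1\}^n$ with exactly $k$ entries equal to $1$, the join $\bigvee_{i\in S}\epsilon_i$ equals $1$ precisely when $S$ meets the set of $1$-indices, so $r(\vec\epsilon)_j = 0$ for $j \le n-k$ and $r(\vec\epsilon)_j = 1$ for $j > n-k$; that is, $r(\vec\epsilon) = (0^{\,n-k}, 1^{\,k})$, which is exactly the image of $k$ under the inclusion $[n] \hookrightarrow \Delta[n]$. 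So $r$ carries $\{0,1\}^n$ into $[n]$, and since $(0^{\,n-i},1^{\,i})$ has $i$ ones it restricts further to the identity on $[n]$. Writing $r|$ for the resulting map $\{0,1\}^n \to [n]$, the diagram
\[
\begin{tikzcd}
{[n]} \ar[r, hook] \ar[d, hook] & \{0,1\}^n \ar[r, "r|"] \ar[d, hook] & {[n]} \ar[d, hook]\\
\Delta[n] \ar[r, hook] & \Delta[1]^n \ar[r, "r"] & \Delta[n]
\end{tikzcd}
\]
commutes (the left square because both legs send $i \mapsto (0^{\,n-i},1^{\,i})$, the right square by the computation of $r(\vec\epsilon)$), and both horizontal composites are the identity, so $[n] \hookrightarrow \Delta[n]$ is a retract of $\{0,1\}^n \hookrightarrow \Delta[1]^n$.

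I do not anticipate a real obstacle here; it is elementary combinatorics in a distributive lattice. The only point needing care is that everything must be routed through the meet/join operations of the previous lemma rather than through a case analysis on the order, since that disjunction is only a proposition — the order-statistic formula is chosen precisely so that this is automatic. If one preferred, $r$ could instead be defined by recursion on $n$ (sort the first $n-1$ coordinates, then insert the last via a chain of adjacent compare-and-swaps assembled from $\wedge$ and $\vee$), but the closed form above seems the least fuss to verify.
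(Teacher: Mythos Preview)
Your proof is correct and follows the same underlying idea as the paper --- constructing a sorting retraction out of the lattice operations --- but your execution is cleaner. The paper defines the retraction iteratively: take $m_1 := \bigwedge_i x_i$, let $i_1$ be ``its index'', then $m_2 := \bigwedge_{i \neq i_1} x_i$, and so on. This requires selecting an index achieving the minimum, which is exactly the kind of case-split on the merely-propositional totality you warn against; making it rigorous needs the extra observation that the resulting sorted tuple is independent of the choices and that the target is a set. Your closed-form order-statistic formula $r(\vec x)_k = \bigwedge_{|S|=k}\bigvee_{i\in S} x_i$ sidesteps this entirely, and your verification that $r$ carries $\{0,1\}^n$ to $[n]$ (by counting ones) makes the ``moreover'' clause more explicit than the paper's one-line appeal to the lattice homomorphism $\{0,1\} \hookrightarrow \Delta[1]$. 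Both arguments buy the same thing; yours just leaves less to the reader.
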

\begin{proof}
Given $x_1,\ldots, x_n : \Delta[1]$, define $m_1 :\defeq \bigwedge_{i : \ord{n}}
x_i$ and let $i_1 : \ord{n}$ be its index, then $m_2 :\defeq \bigwedge_{\ord{n} \setminus \{m_1\}} x_i$, and so on. Note that $m_1 \leq m_2 \leq \cdots
\leq m_n$, so that $m : \Delta[n]$. Finally, if the $x_i$ were already in
increasing order, then $m_i = x_i$, showing that this is a retract.

We note that this retract argument works just as well on $\{0,1\}^n \to [n]$, if
we identify $[n]$ with the subset $\{\vec{x} : \{0, 1\}^n \mid x_1 \leq \cdots
\leq x_n\}$ of increasing sequences. Since it only makes use of the lattice
structure of $\{0, 1\}^n$ and $\Delta[1]^n$, and the inclusion is a lattice
homomorphism, we conclude that the necessary squares commute.
\end{proof}

\begin{thm}\label{lem:delta.n.contractible}
  The $n$-simplex has trivial realization: $\geomreal \Delta[n] = \ast$.
\end{thm}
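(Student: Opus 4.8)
The plan is to deduce the contractibility of $\geomreal\Delta[n]$ from two facts: that $\geomreal$ is by definition nullification at $\Delta[1]$, so its modal types are exactly the $\Delta[1]$-null types; and that $\Delta[n]$ is a retract of the cube $\Delta[1]^n$ (\cref{lem:delta.n.retract}). Concretely, I will show that \emph{every} $\geomreal$-modal type $Z$ is not merely $\Delta[1]$-null but $\Delta[n]$-null, i.e.\ the constant-map inclusion $Z \to Z^{\Delta[n]}$ is an equivalence. Granting this, the unique map $\Delta[n] \to \ast$ induces, for every $\geomreal$-modal $Z$, an equivalence $Z \simeq Z^{\ast} \to Z^{\Delta[n]}$ on mapping spaces into modal types; by the standard characterization of localization-equivalences, $\Delta[n] \to \ast$ is then inverted by $\geomreal$, so $\geomreal\Delta[n] \to \geomreal(\ast) = \ast$ is an equivalence.

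To prove that every $\geomreal$-modal $Z$ is $\Delta[n]$-null, fix such a $Z$ and consider the class $\mathcal{C}_Z$ of types $A$ for which $Z$ is $A$-null. It contains the terminal type trivially, and contains $\Delta[1]$ precisely because $Z$ is $\geomreal$-modal. It is closed under retracts: if $A$ is a retract of $A'$ then $Z \to Z^{A}$ is a retract in the arrow category of $Z \to Z^{A'}$ via the functoriality of $Z^{(-)}$, and a retract of an equivalence is an equivalence. And it is closed under binary products: using the exponential law $Z^{A \times B} \cong (Z^{A})^{B}$, if $Z \simeq Z^{A}$ and $Z \simeq Z^{B}$ then $Z^{B}$ is again $A$-null (null-ness is invariant under equivalence), so $Z \simeq Z^{B} \simeq (Z^{A})^{B} \cong Z^{A\times B}$. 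Hence $\Delta[1]^{n} \in \mathcal{C}_Z$, and since $\Delta[n]$ is a retract of $\Delta[1]^{n}$ by \cref{lem:delta.n.retract}, also $\Delta[n] \in \mathcal{C}_Z$, as wanted.

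The only point needing a careful (but routine) check is the product-closure step: one must verify that the chain of equivalences $Z \to Z^{B} \to (Z^{A})^{B} \cong Z^{A\times B}$ actually composes to the constant-map inclusion $Z \to Z^{A \times B}$, and not merely to some equivalence — this is a short unwinding of the currying isomorphism, as is the analogous compatibility of constant inclusions in the retract step. I do not anticipate a genuine obstacle; the argument is essentially formal, relying only on exponential laws, the standard description of localization-equivalences as the maps inducing equivalences on mapping spaces into modal types, and \cref{lem:delta.n.retract}. I deliberately avoid the more geometric alternative — contracting $\Delta[n]$ onto its top vertex $(1,\dots,1)$ by taking coordinatewise join with the constant path in $\Delta[1]$ — since converting that simplicial contraction into an actual homotopy of $\geomreal\Delta[n]$ would require knowing that $\geomreal$ preserves the finite product $\Delta[n] \times \Delta[1]$, which the retract route renders unnecessary.
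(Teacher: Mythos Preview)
Your argument is correct and is essentially the same approach as the paper's: both use \cref{lem:delta.n.retract} together with the fact that $\Delta[1]$ detects simplicial connectivity. The paper simply states it more tersely, observing that $\geomreal\Delta[n]$ is a retract of $\geomreal(\Delta[1]^n)$ and the latter is contractible; your version unfolds the contractibility of $\geomreal(\Delta[1]^n)$ via the product-closure of null types, which is exactly the content the paper leaves implicit.
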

\begin{proof}
The realization $\geomreal \Delta[n]$ is a retract of the realization $\geomreal
\Delta[1]^n$, and this is contractible since $\Delta[1]$ detects simplicial connectivity.
\end{proof}

\begin{thm}\label{thm:skeleton.simplex}
  The inclusion $[n] \hookrightarrow \Delta[n]$ given by
  \[
    i \mapsto
    0 \leq \cdots \leq 0 \leq \overbrace{1 \leq \cdots \leq 1}^{\mbox{$i$ times}}
  \] is a $\sk$-equivalence, showing that $\sk\Delta[n] \simeq [n]$.
\end{thm}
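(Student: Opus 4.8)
The plan is to prove that $\iota_n$ is a $\sk$-equivalence; the stated identification $\sk\Delta[n]\simeq[n]$ then follows at once, because $[n]$, being a finite coproduct of copies of the unit type $\mathbf{1}$, is itself $\sk$-modal: $\sk$ is lex (so $\sk\mathbf{1}\simeq\mathbf{1}$) and, being a left adjoint via $\sk\dashv\csk$, preserves finite coproducts, so the counit $\sk[n]\to[n]$ is an equivalence, giving $\sk\Delta[n]\simeq\sk[n]\simeq[n]$ once $\sk\iota_n$ is an equivalence. To show $\iota_n$ is a $\sk$-equivalence I would first reduce to $n=1$. By \cref{lem:delta.n.retract}, $\iota_n$ is a retract of the inclusion $j^{\times n}\colon\{0,1\}^n\hookrightarrow\Delta[1]^n$ of the vertices of the $n$-cube, where $j\colon\{0,1\}\hookrightarrow\Delta[1]$ picks out the two endpoints. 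Since $\sk$ is a functor and equivalences are closed under retracts, it suffices to show $j^{\times n}$ is a $\sk$-equivalence; and since $\sk$ is lex it preserves finite products, so $\sk(j^{\times n})\simeq(\sk j)^{\times n}$, and a finite product of equivalences is an equivalence. Thus everything reduces to showing $\sk j$ is an equivalence, i.e.\ that $\sk\Delta[1]\simeq\{0,1\}$, with $j$ exhibiting the $\sk$-counit.

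For this base case, since $\{0,1\}$ is $\sk$-modal the map $j$ factors through the $\sk$-counit as $\{0,1\}\xrightarrow{q}\sk\Delta[1]\xrightarrow{\varepsilon}\Delta[1]$, and it remains to see that $q$ is an equivalence. Both types are sets — $\Delta[1]$ by antisymmetry and reflexivity of its order, and $\sk\Delta[1]$ because $\sk$ is lex hence preserves $0$-types — and $q$ is injective because $\varepsilon\circ q=j$ is injective (using $0\neq 1$). For surjectivity I would verify that $q$ has the universal property of the coreflection, namely that for every simplicially crisp $0$-skeletal type $S$, post-composition with $j$ is an equivalence $\Map(S,\{0,1\})\xrightarrow{\sim}\Map(S,\Delta[1])$. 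Here the adjunction $\sk\dashv\csk$ does the work: for such $S$ one has $\Map(S,\Delta[1])\simeq\Map(\sk S,\Delta[1])\simeq\Map(S,\csk\Delta[1])$, using $\sk S\simeq S$. Axiom $\partial\Delta$, which asserts $\csk\bigl((i=0)\vee(i=1)\bigr)$ for every $i\colon\Delta[1]$, is precisely what identifies $\csk\Delta[1]$ with $\csk\{0,1\}$ — the mono $\csk j$ acquires a section, hence is an equivalence — whence $\Map(S,\csk\Delta[1])\simeq\Map(S,\csk\{0,1\})\simeq\Map(\sk S,\{0,1\})\simeq\Map(S,\{0,1\})$; tracing through, the composite equivalence is post-composition with $j$, as needed. (Axiom $\Delta\sk$ is used along the way to know that the $0$-skeletal types form a workable class of test objects and that $\sk\Delta[1]$ is one of them.)

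The formal steps — the retract reduction, lexness and coproduct-preservation of $\sk$, and recovering $\sk\Delta[n]\simeq[n]$ from the fact that $\iota_n$ is a $\sk$-equivalence — are routine. The crux, and the part I expect to be the real obstacle, is the base case $\sk\Delta[1]\simeq\{0,1\}$: one must pass from the purely \emph{coskeletal} content of Axiom $\partial\Delta$ (that $\Delta[1]$ agrees with its boundary $\{0,1\}$ only after applying $\csk$) to an honest statement about the $0$-skeleton, and the crispness bookkeeping in the coreflection's universal property — which, as in \emph{Real Cohesion}, is available only $\sk$-crisply — must be handled with care. An alternative that avoids the retract reduction is to run the same universal-property argument directly at level $n$, using $\csk\Delta[n]\simeq\csk[n]$ (which follows from Axiom $\partial\Delta$ and lexness of $\csk$, since $\Delta[n]$ is cut out of $\Delta[1]^n$ by an order proposition); the delicate point is unchanged.
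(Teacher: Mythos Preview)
Your proposal is correct and follows essentially the same route as the paper: reduce to $n=1$ via the retract of \cref{lem:delta.n.retract} together with lexness of $\sk$, and for the base case use Axiom~$\partial\Delta$ to show that $\csk j : \csk\{0,1\}\to\csk\Delta[1]$ is an equivalence. The only real difference is that the paper concludes the base case in one line by invoking the general fact (from \emph{Real Cohesion}) that a simplicially crisp map is a $\sk$-equivalence iff it is a $\csk$-equivalence, whereas you unpack this via the universal property of the coreflection; your detour through injectivity/surjectivity of $q$ is unnecessary once you have that universal property, but the argument is sound.
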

\begin{proof}
We will show that $[1] \hookrightarrow \Delta[1]$ is a $\sk$-equivalence. This
will imply that $[1]^n \hookrightarrow \Delta[1]^n$ is a $\sk$-equivalence;
since $[n] \hookrightarrow \Delta[n]$ is a retract of this, we may conclude that
it is a $\sk$-equivalence as well.

Since this inclusion $\{0, 1\} \hookrightarrow \Delta[1]$ is simplicially crisp, to show that it is a $\sk$-counit it
will suffice to show that it is a $\csk$-equivalence. We therefore need an
inverse $\csk \Delta[1] \to \csk\{0, 1\}$. Since the codomain is $0$-coskeletal,
it suffices to define this map on $\Delta[1]$. So let $i : \Delta[1]$, seeking
$\csk\{0, 1\}$. By Axiom $\partial\Delta$, we have $\csk((i = 0) \vee (i =
1))$, and since our goal is $0$-coskeletal, we may assume that $i = 0$ or $i =
1$. If $i = 0$, then we send it to $0^{\csk}$, if $i = 1$, then we send it to $1^{\csk}$.

To show that this map is the inverse of $\csk\{0, 1\} \to \csk\Delta[1]$, we may
appeal to the fact that identities in a modal type are modal, and so we may
remove the $\csk$ around $\csk((i=0) \vee (i = 1))$ and check that the maps
invert each other on these elements, which they clearly do.
\end{proof}

We can also define the type of $n$-simplices in a simplicially
crisp type, and prove a few elementary lemmas concerning the $n$-simplices of types.

\begin{defn}
  Let $X$ be a simplicially crisp type. Then define the type $X_n$ of
  $n$-simplices in $X$ as
  \[
    X_n :\defeq \sk(\Delta[n] \to X).
  \]
  If $f : X \to Y$ is a simplicially crisp map, then it induces a map $f_n : X_n
  \to Y_n$ by post-composition.
\end{defn}

\begin{lem}
Let $f : X \to Y$ be a simplicially crisp map. If $f_n : X_n\to Y_n$ is an
equivalence for all $n$, then $f$ is an equivalence.
\end{lem}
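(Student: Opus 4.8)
The plan is to reduce this immediately to \cref{thm:equivalence.on.generators.gives.equivalence}. Recall that for the simplicial focus the discrete comodality $\flat$ is $\sk$, and that one of the Simplicial Axioms asserts that the family $\Delta[-] : \Nb \to \Type$ detects simplicial continuity; here the index $\Nb$ is a simplicially discrete set, hence $\sk$-modal and inhabited, so it is a legitimate index for a continuity-detecting family. The map $f$ is simplicially crisp by hypothesis, exactly as required by that theorem, so the only work is to match up the remaining hypotheses.

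So it suffices to verify the two conditions of \cref{thm:equivalence.on.generators.gives.equivalence} for the generators $G_n = \Delta[n]$. For each $n : \Nb$, the map $\sk(\Delta[n] \to X) \to \sk(\Delta[n] \to Y)$ obtained by postcomposition with $f$ is by definition exactly $f_n : X_n \to Y_n$, which is an equivalence by assumption. For the other condition --- that $\sk f : \sk X \to \sk Y$ be an equivalence --- I would observe that $\Delta[0]$ is the empty product $\Delta[1]^0$, which is the unit type, so that $(\Delta[0] \to X) \simeq X$ naturally in $X$. Hence $X_0 = \sk(\Delta[0] \to X) \simeq \sk X$, and under this identification $f_0$ is precisely $\sk f$; since $f_0$ is an equivalence by the $n = 0$ instance of the hypothesis, so is $\sk f$.

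With both conditions in hand, \cref{thm:equivalence.on.generators.gives.equivalence} yields that $f$ is an equivalence, which is what was to be shown. The only point requiring any care --- and it is bookkeeping rather than a genuine obstacle --- is checking that the equivalence $X_0 \simeq \sk X$ is natural enough that $f_0$ corresponds on the nose (not just up to equivalence) to $\sk f$; this follows from the naturality of the unit-type isomorphism $(\ast \to X) \simeq X$. One should also confirm that $\Nb$ satisfies the side conditions imposed on the index of a continuity-detecting family in \cref{thm:equivalence.on.generators.gives.equivalence}, which it does since it is an inhabited simplicially discrete set.
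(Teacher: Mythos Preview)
Your proof is correct and follows exactly the paper's approach: both reduce to \cref{thm:equivalence.on.generators.gives.equivalence} via the axiom that $\Delta[-]$ detects simplicial continuity, with the observation that $X_0 \simeq \sk X$ handling the $\sk f$ hypothesis. You have simply spelled out the details that the paper leaves implicit.
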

\begin{proof}
This is a special case of
\cref{thm:equivalence.on.generators.gives.equivalence}, noting that $X_0 \simeq
\sk X$.
\end{proof}

\begin{lem}
  Let $f : X \to Y$ be a simplicially crisp map. Then for a crisp $y : \Delta[n]
  \to Y$, we have
  \[
\fib_{f_n}(y^{\sk}) \equiv {\sk(\dprod{i : \Delta[n]} \fib_f(y(i)))}.
  \]
\end{lem}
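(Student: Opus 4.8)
The plan is to strip off the outer $\sk$ using its crisp left-exactness and then identify the resulting fibre of a postcomposition map by the standard ``type-theoretic axiom of choice'' rearrangement. First I would name the postcomposition map $f \circ (-) : (\Delta[n] \to X) \to (\Delta[n] \to Y)$; since $f : X \to Y$ is simplicially crisp (as is $\Delta[n]$), this map is simplicially crisp too, it is by definition the map to which $\sk$ is applied in order to obtain $f_n$ (so $f_n$ is $\sk(f \circ (-))$), and $y^{\sk} : Y_n = \sk(\Delta[n] \to Y)$ is the image of the crisp term $y$ under $\flat$-introduction.

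Next I would compute the fibre over $y$ \emph{before} applying $\sk$. Function extensionality rewrites $f \circ g = y$ as $\dprod{i : \Delta[n]}(f(g(i)) = y(i))$, and distributing $\Pi$ over $\Sigma$ gives
\[
\fib_{f \circ (-)}(y) \;\equiv\; \dsum{g : \Delta[n] \to X}\dprod{i : \Delta[n]}(f(g(i)) = y(i)) \;\equiv\; \dprod{i : \Delta[n]}\dsum{x : X}(f(x) = y(i)) \;\equiv\; \dprod{i : \Delta[n]}\fib_f(y(i)).
\]
Crucially this chain is assembled only from the simplicially crisp data $f$ and $y$, so it is a simplicially crisp equivalence of simplicially crisp types; applying the (crisply functorial) operation $\sk$ to it therefore yields $\sk(\fib_{f \circ (-)}(y)) \equiv \sk\big(\dprod{i : \Delta[n]}\fib_f(y(i))\big)$, which is the right-hand side of the lemma.

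It remains to recognize the left-hand side. The fibre $\fib_{f \circ (-)}(y)$ is the pullback of the simplicially crisp cospan $(\Delta[n] \to X) \xrightarrow{f \circ (-)} (\Delta[n] \to Y) \xleftarrow{\;y\;} \One$. As recalled at the start of \cref{sec:specializing}, $\sk$ inherits from spatial type theory all the properties of $\flat$, in particular crisp left-exactness \cite[Theorem 6.10]{mike:real-cohesive-hott}; hence $\sk$ preserves this pullback, and since $\sk\One \equiv \One$ we obtain $\sk(\fib_{f \circ (-)}(y)) \equiv \fib_{\sk(f \circ (-))}(y^{\sk}) \equiv \fib_{f_n}(y^{\sk})$. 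Chaining the two displayed identifications gives the claimed equivalence.

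The argument is essentially routine; the one thing that genuinely needs care is the crispness bookkeeping flagged above — one must check that the distributivity equivalence is well-formed over the simplicially-crisp division of the context, and that the cospan presenting the fibre is itself simplicially crisp, so that the crisp left-exactness of $\sk$ really applies. No appeal to the projectivity of $\Delta[n]$, nor to \cref{thm:equivalence.on.generators.gives.equivalence}, is needed, precisely because we keep the outer $\sk$ on the $\Pi$-type and never try to commute it past.
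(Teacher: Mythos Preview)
Your proof is correct and follows essentially the same approach as the paper. The paper unfolds the fibre explicitly, uses $\sk$-induction together with the identifications $(a^{\sk} = b^{\sk}) \equiv \sk(a = b)$ and $\sk$-over-$\Sigma$ to reach $\sk\big(\dsum{x : \Delta[n]\to X}(f\circ x = y)\big)$, and then applies function extensionality and the $\Pi$/$\Sigma$ rearrangement; you package the first few steps as a single appeal to crisp pullback preservation of $\sk$ (\cite[Theorem~6.10]{mike:real-cohesive-hott}) and then do the same rearrangement before applying $\sk$. The ingredients are identical, only the order and level of abstraction differ.
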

\begin{proof}
  We compute:
  \begin{align*}
    \fib_{f_n}(y^{\sk})
    &\defeq \dsum{x : X_n} (f_n x = y^{\sk}) \\
    &\defeq \dsum{x : \sk(\Delta[n] \to X)} \ind{x}{\tau^{\sk}}{(f \circ \tau)^{\sk} = y^{\sk}}\\
    &\equiv \dsum{x : \sk(\Delta[n] \to X)} \ind{x}{\tau^{\sk}}{\sk(f \circ \tau = y)} \\
    &\equiv \sk(\dsum{x : \Delta[n] \to X} (f \circ x = y))\\
    &\equiv \sk(\dsum{x : \Delta[n] \to X} (\dprod{i : \Delta[n]} (f(x(i)) = y(i))))\\
    &\equiv \sk(\dprod{i : \Delta[n]} \fib_f(y(i))).
  \end{align*}
\end{proof}

\begin{lem}
Let $f : X \to Y$ be a simplicially crisp map. Then $(\im f)_n \simeq \im f_n$.
\end{lem}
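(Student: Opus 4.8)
The plan is to apply the functor $(-)_n$ to the (surjection, embedding) factorization $X \xrightarrow{e} \im f \xrightarrow{m} Y$ of $f$ and check that the result is again such a factorization. Since $f$ is simplicially crisp and the image factorization together with its two maps is definable from $f$, the object $\im f$ and the maps $e$, $m$ are all simplicially crisp, so $(-)_n$ applies to each and produces $X_n \xrightarrow{e_n} (\im f)_n \xrightarrow{m_n} Y_n$ with $f_n = m_n \circ e_n$. By the essential uniqueness of (surjection, embedding) factorizations it then suffices to prove that $m_n$ is an embedding and that $e_n$ is surjective; the claimed equivalence $(\im f)_n \simeq \im f_n$, compatible with the maps from $X_n$ and to $Y_n$, follows immediately. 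In both steps the statement to be proved — ``$\fib_{m_n}(\sigma)$ is a proposition'' in $\sigma : Y_n$, and ``$\fib_{e_n}(w)$ is inhabited'' in $w : (\im f)_n$ — is itself a proposition, so by $\sk$-induction (using the general $\flat$-elimination rule, which permits inducting on a not-necessarily-crisp term of a $\sk$-type) it is enough to treat simplices of the form $\sigma^{\sk}$ and $w = v^{\sk}$ for simplicially crisp $\sigma$ and $v$; for those the fibres of $m_n$ and $e_n$ are computed by the preceding lemma.

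For $m_n$: the preceding lemma gives $\fib_{m_n}(\sigma^{\sk}) \equiv \sk\big(\dprod{i : \Delta[n]} \fib_m(\sigma(i))\big)$ for simplicially crisp $\sigma : \Delta[n] \to Y$. Because $m$ is an embedding each $\fib_m(\sigma(i))$ is a proposition, hence so is the dependent product, and $\sk$ sends (crisp) propositions to propositions, being left exact --- one of the results of \emph{Real Cohesion} valid for every focus. Thus $\fib_{m_n}(\sigma^{\sk})$ is a proposition and $m_n$ is an embedding.

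For $e_n$, which I expect to be the crux: the preceding lemma gives $\fib_{e_n}(v^{\sk}) \equiv \sk\big(\dprod{i : \Delta[n]} \fib_e(v(i))\big)$ for simplicially crisp $v : \Delta[n] \to \im f$, and we must show this type is merely inhabited. This is exactly where crisp projectivity of $\Delta[n]$ enters: applied to the simplicially crisp family $i \mapsto \fib_e(v(i))$ it gives $\exists\, \sk\big(\dprod{i : \Delta[n]} \fib_e(v(i))\big) \equiv \sk\big(\dprod{i : \Delta[n]} \exists\, \fib_e(v(i))\big)$. Since $e$ is surjective, $\exists\, \fib_e(v(i))$ holds for every $i$, so the dependent product is contractible and therefore so is its $0$-skeleton. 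Hence $\fib_{e_n}(v^{\sk})$ is merely inhabited, and $e_n$ is surjective.

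The load-bearing step, and the one that genuinely uses an axiom, is the surjectivity of $e_n$: crisp projectivity of the $\Delta[n]$ is precisely what makes $\exists$ --- equivalently, $\shape$-connectedness of the fibres --- commute past $\sk(\Delta[n] \to (-))$, and without it $(-)_n$ would not be expected to preserve surjections. The rest is bookkeeping: confirming that the image factorization inherits simplicial crispness from $f$, that the families $i \mapsto \fib_e(v(i))$ and $i \mapsto \fib_m(\sigma(i))$ are simplicially crisp so that the preceding lemma and the projectivity axiom apply, and that the reduction from crisp simplices to arbitrary ones is licensed by the general $\flat$-elimination rule.
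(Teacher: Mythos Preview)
Your proof is correct but takes a different route from the paper. The paper gives a direct chain of equivalences: unfold $(\im f)_n = \sk(\Delta[n] \to \dsum{y:Y}\exists\fib_f(y))$, distribute $\sk$ over the $\Sigma$, apply projectivity once to swap $\exists$ past $\sk(\dprod{i:\Delta[n]}-)$, and then recognize the inner type as $\fib_{f_n}(y)$ via the preceding fibre lemma applied to $f$ itself. Your argument instead shows that $(-)_n$ preserves the (surjection, embedding) factorization, applying the fibre lemma separately to the two halves $e$ and $m$ and invoking uniqueness of image factorizations. Both proofs rest on exactly the same use of projectivity; yours is a little longer (two invocations of the fibre lemma, an explicit $\sk$-induction step, and the lexness-of-$\sk$ observation for the embedding half) but makes clearer \emph{why} the statement holds, namely that $(-)_n$ preserves embeddings automatically and preserves surjections precisely because of crisp projectivity of $\Delta[n]$. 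The paper's calculation is terser but less informative about which half actually needs the axiom.
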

\begin{proof}
  We use the projectivity of the simplices.\footnote{This lemma is in fact
    equivalent to assuming the projectivity of the simplices.}
  \begin{align*}
    (\im f)_n &\defeq \sk(\Delta[n] \to \dsum{y : Y} \exists \fib_f(y)) \\
              &\equiv \dsum{y : Y_n} \ind{y}{\sigma^{\sk}}{\sk(\dprod{i : \Delta[n]} \exists \fib_f(\sigma i))} \\
              &\equiv \dsum{y : Y_n} \ind{y}{\sigma^{\sk}}{\exists \sk(\dprod{i : \Delta[n]} \fib_f(\sigma i))} \\
              &\equiv \dsum{y : Y_n} \exists\fib_{f_n}(y)\\
                &\defeq \im f_n.
  \end{align*}
\end{proof}

The definition of the $n$-simplices that we gave above is simple,
but it is not that straightforward to see that it is functorial in the
ordinal $[n]$. We can give an alternative definition of the
$n$-simplices which makes the functoriality evident.

\begin{defn}
  Let $\type{Interval}$ denote the category of \emph{intervals}: totally ordered sets with distinct top and bottom. The maps of $\type{Interval}$ are the monotone functions preserving top and bottom.

  Let $\type{FinOrd}_{+}$ denote the category of finite inhabited ordinals and order preserving maps between them --- the usual ``simplex category''. We denote by $[n]$ the ordinal $\{0, \ldots, n\}$.
\end{defn}

We will need a standard reformulation of the category of finite ordinals in
terms of intervals (see e.g. \cite[\S VIII.7]{maclane-moerdijk:book})
\begin{lem}
  There is a contravariant, fully faithful functor $\iota : \type{FinOrd}_{+}\op \to \type{Interval}$ sending $[n]$ to $[n + 1]$ with top element $n + 1$ and bottom element $0$. To a map $f : [n] \to [m]$, we define $\iota f : [m+1] \to [n+1]$ by
  \[
    \iota f (i) :\defeq \begin{cases} \min\{j \mid i \leq f(j) \} \\
                        n+1 &\mbox{if no such minimum exists.}\end{cases}
  \]
  Conversely, to a monotone map $g : [m+1] \to [n+1]$ preserving top and bottom, we define $\iota\inv g : [n] \to [m]$ by the dual formula
  \[
(\iota^{-1} g)(j) :\defeq \max\{i \mid g(j) \leq i\}.
  \]
\end{lem}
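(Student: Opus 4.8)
The plan is to recognize $\iota f$ and $\iota^{-1} g$ as adjoints between the finite total orders $[n+1]$ and $[m+1]$, so that the bijectivity (full faithfulness) and functoriality drop out of uniqueness of adjoints. For $f : [n] \to [m]$ in $\type{FinOrd}_{+}$, let $\bar f : [n+1] \to [m+1]$ be the monotone extension with $\bar f(n+1) := m+1$ (this need not preserve bottoms, only tops). Since $[n+1]$ and $[m+1]$ are complete lattices and $\bar f$ preserves all meets (it is monotone and sends $\top$ to $\top$), it has a left adjoint, which by the standard formula is $i \mapsto \min\{\, j \mid i \leq \bar f(j) \,\}$; unwinding, this is exactly the $\iota f$ of the statement, taking the value $n+1$ precisely when no $j \in [n]$ satisfies $i \leq f(j)$. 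Being a left adjoint, $\iota f$ preserves joins, so $\iota f(0) = 0$; and $\iota f(m+1) = n+1$ because $\bar f(j) = f(j) \leq m$ for $j \leq n$. Hence $\iota f$ is a genuine morphism of $\type{Interval}$, and we record the adjunction $\iota f \dashv \bar f$. This also dispatches functoriality formally: $\iota(\id) = \id$ since $\overline{\id} = \id$, and for composable $f, g$ one checks $\overline{g \circ f} = \bar g \circ \bar f$ (both agree on $[n]$ and send the top to the top), so $\iota(g\circ f)$, being the left adjoint of $\bar g \circ \bar f$, equals $\iota f \circ \iota g$, the composite of the left adjoints in the reversed order — this is the contravariance.

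For full faithfulness I would exhibit the inverse directly. A morphism $g : [m+1] \to [n+1]$ of $\type{Interval}$ is monotone and preserves bottoms, hence preserves joins, hence has a right adjoint $g^{R} : [n+1] \to [m+1]$, namely $j \mapsto \max\{\, i \mid g(i) \leq j \,\}$ (this is the displayed formula for $\iota^{-1}g$). For $j \in [n]$ this set contains $0$ (as $g(0)=0$) but not $m+1$ (as $g(m+1)=n+1 > j$), so $g^{R}$ restricts to a monotone map $[n] \to [m]$; that restriction is $\iota^{-1}g$, and since $g^{R}(n+1) = m+1$ we have $g^{R} = \overline{\iota^{-1}g}$. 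Uniqueness of adjoints now finishes it: the right adjoint of $\iota f$ is $\bar f$, so $\iota^{-1}(\iota f) = \bar f|_{[n]} = f$; and the left adjoint of $g^{R}$ is $g$, so $\iota(\iota^{-1} g) = g$. Thus $f \mapsto \iota f$ is a bijection $\Hom_{\type{FinOrd}_{+}}([n],[m]) \cong \Hom_{\type{Interval}}([m+1],[n+1])$.

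The only delicate part is the boundary bookkeeping: at each step one must track the ``no such minimum'' default value $n+1$ and the $+1$ index shifts, checking that $\bar f$ is built correctly, that the standard adjoint min/max formulas coincide with the ones appearing in the statement, and that the various restrictions land in $[n]$ and $[m]$ as claimed. All of this is elementary; the categorical skeleton — preserving (co)limits yields adjoints, adjoints are unique — does the real work. Equivalently, one can avoid adjoint language entirely and instead establish the single identity $\iota f(i) \leq j \iff i \leq f(j)$ for $i \in [m+1]$ and $j \in [n]$ (immediate from monotonicity of $f$), then deduce functoriality, the inverse formulas, and both round-trip equations from it by hand; this is the same argument spelled out, and parallels the treatment in \cite[\S VIII.7]{maclane-moerdijk:book}.
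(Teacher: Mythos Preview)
The paper does not supply its own proof of this lemma; it is stated as a standard fact with a reference to Mac Lane--Moerdijk \S VIII.7. Your Galois-connection argument (recognizing $\iota f$ as the left adjoint of the top-preserving extension $\bar f$, and $\iota^{-1}g$ as the restriction of the right adjoint of $g$) is correct and is indeed the standard approach that reference records. Note incidentally that you have silently corrected a typo in the paper's statement: the displayed formula for $\iota^{-1}g$ should read $\max\{i \mid g(i) \leq j\}$, as you write, rather than the paper's $\max\{i \mid g(j) \leq i\}$, which does not typecheck since $g$ has domain $[m+1]$ while $j \in [n]$.
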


We may now define the $n$-simplices in a way which makes clear their functoriality in the category of finite inhabited ordinals.
\begin{defn}
  We define the $n$-simplex $\Delta[n]$ to be
  \[
\Delta[n] :\defeq \type{Interval}(\iota[n], \Delta[1]).
  \]
  Therefore, $\Delta : \type{FinOrd}_{+} \to \Set$ gives a functor from finite inhabited ordinals to the category of sets, where $\Delta(f) : \Delta[n] \to \Delta[m]$ is given by precomposing with $\iota f : [m+1] \to [n + 1]$.
\end{defn}

Noting that $[n] \cong \type{Interval}(\iota[n], [1])$ by the fully-faithfulness
of $\iota$, the inclusion of top and bottom elements $[1] \hookrightarrow
\Delta[1]$ induces a natural inclusion $[n] \hookrightarrow \Delta[n]$ by
post-composition. As we saw in \cref{thm:skeleton.simplex}, these inclusions are $\sk$-counits.

\subsubsection{The \v{C}ech Complex}

The $0$-coskeleton modality $\csk$ is useful for working in simplicial cohesion
since it enables us to give an easy construction of the \v{C}ech complex of a
map $f : X \to Y$ between $0$-skeletal types. The \v{C}ech complex of such a map
is, externally speaking, the simplicial type formed by repeatedly pulling back $f$ along itself:
\[
  \check{\type{C}}(f) :\defeq \quad
  \begin{tikzcd}
    \overset{\raisebox{4pt}{\vdots}}{ X \times_{Y} X \times_{Y} X }\ar[d, leftarrow, shift left = 10]\ar[d, shift left = 15] \ar[d, shift left = 5] \ar[d, leftarrow] \ar[d, shift right = 5]\ar[d, leftarrow, shift right = 10]\ar[d, shift right = 15]\\
    X \times_{Y} X\ar[d, shift left = 5] \ar[d, leftarrow] \ar[d, shift right = 5] \\
    X
  \end{tikzcd}
\]

\begin{defn}
  Let $f : X \to Y$ be a map. The \emph{\v{C}ech complex} $\check{\type{C}}(f)$ of $f$ is defined to be its $\csk$-image:
  $$\check{\type{C}}(f) :\defeq \dsum{y : Y} \csk(\dsum{x : X}(fx = y)).$$
  \end{defn}

We will justify this definition by calculating the type of $n$-simplices of
$\check{\type{C}}(f)$ when both $X$ and $Y$ are $0$-skeletal.
  \begin{prop}\label{prop:cech.nerve}
    Let $f : X \to Y$ be a simplicially crisp map between $0$-skeletal types. Then
    $$\check{\type{C}}(f)_{n} \equiv X \times_{Y} \cdots \times_{Y} X \equiv \dsum{y : Y} (\dsum{x : X} (f x = y))^{n + 1}$$
    is the $(n+1)$-fold pullback of $f$ along itself.
 \end{prop}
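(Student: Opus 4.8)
The plan is to compute $\check{\type{C}}(f)_n \defeq \sk(\Delta[n] \to \check{\type{C}}(f)) = \sk\bigl(\Delta[n] \to \dsum{y : Y}\csk(\fib_f(y))\bigr)$ by a chain of equivalences of the same flavour as the preceding lemmas on $\fib_{f_n}$ and $(\im f)_n$. First I would distribute mapping-in over the dependent sum: a map $\Delta[n] \to \dsum{y : Y}\csk(\fib_f(y))$ is a pair of $g : \Delta[n] \to Y$ and a term of $\dprod{i : \Delta[n]}\csk(\fib_f(g\,i))$, so $\check{\type{C}}(f)_n \simeq \sk\bigl(\dsum{g : \Delta[n] \to Y}\dprod{i : \Delta[n]}\csk(\fib_f(g\,i))\bigr)$.

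Now two simplifications. Since $Y$ is $0$-skeletal it is $\geomreal$-modal, and $\geomreal\Delta[n] \simeq \ast$ by \cref{lem:delta.n.contractible}; by the universal property of $\geomreal$ as a nullification this gives $(\Delta[n] \to Y) \simeq (\geomreal\Delta[n] \to Y) \simeq Y$, so $g$ is the constant map at its value $y : Y$ and $g\,i \defeq y$. Substituting and pulling $\sk$ through the dependent sum over the $0$-skeletal base $Y$ --- exactly the manoeuvre used in the $\fib_{f_n}$-computation, valid because $\sk$ is a lex comodality --- turns the above into $\dsum{y : Y}\sk\bigl(\dprod{i : \Delta[n]}\csk(\fib_f(y))\bigr) \defeq \dsum{y : Y}\sk(\Delta[n] \to \csk(\fib_f(y)))$. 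For the inner factor I would then invoke the crisp adjunction $\sk \dashv \csk$: by the first theorem of \cref{sec:specializing}, the whole $\flat \dashv \sharp$-calculus of \emph{Real Cohesion} transfers to $\sk \dashv \csk$, and in particular \cite[Corollary 6.26]{mike:real-cohesive-hott} gives $\sk(\Delta[n] \to \csk S) \simeq \sk(\sk\Delta[n] \to S)$ for simplicially crisp $S$. Taking $S \defeq \fib_f(y)$ --- which is $0$-skeletal since $X$ and $Y$ are and $\sk$ is lex --- and using $\sk\Delta[n] \simeq [n]$ from \cref{thm:skeleton.simplex}, the inner factor becomes $\sk([n] \to \fib_f(y))$; and since $[n]$ is a finite discrete ($0$-skeletal) set and $\fib_f(y)$ is $0$-skeletal, $[n] \to \fib_f(y) \simeq \bigl(\fib_f(y)\bigr)^{n+1}$ is again $0$-skeletal, so the outer $\sk$ disappears. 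Altogether $\check{\type{C}}(f)_n \simeq \dsum{y : Y}\bigl(\dsum{x : X}(f x = y)\bigr)^{n+1}$, which is the $(n+1)$-fold fibre product $X \times_Y \cdots \times_Y X$.

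The main difficulty is not conceptual but the bookkeeping of crispness and $0$-skeletality: one has to check that $\fib_f(y)$, $[n] \to \fib_f(y)$, and the mapping spaces in play are $0$-skeletal --- all a consequence of $\sk$ being lex and sitting in the adjoint triple $\geomreal \dashv \sk \dashv \csk$, so that $\sk$-modal types are closed under the relevant $\Sigma$s, $\Pi$s, identity types and finite (co)products --- and that $f$, $\fib_f$, and the relevant type families are simplicially crisp, so that the crisp $\sk \dashv \csk$ adjunction and the crisp ``$\sk$ out of $\Sigma$'' step legitimately apply. I would make the ``$g$ is constant'' step formal by $\sk$-induction, as in the $\fib_{f_n}$ lemma, rather than appealing informally to connectivity of $\Delta[n]$.
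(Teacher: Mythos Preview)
Your proposal is correct and follows essentially the same chain of equivalences as the paper. The only difference is the order in which you apply the two key moves: you pull $\sk$ through the $\Sigma$ first (Lemma 6.8 of \emph{Real Cohesion}) and then invoke the crisp adjunction $\sk(\Delta[n]\to\csk S)\simeq\sk([n]\to S)$, whereas the paper first rewrites the fibre $(\Delta[n]\to\csk S_y)\simeq\csk([n]\to S_y)$ under the outer $\sk$ and only then applies Lemma 6.8 together with $\sk\csk=\sk$.

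One point of care in your ordering: once you have written the result of step~4 as $\dsum{y:Y}\sk(\Delta[n]\to\csk\fib_f(y))$, the variable $y$ looks non-crisp, and Corollary~6.26 requires its second argument to be crisp. What actually comes out of Lemma~6.8 is $\dsum{u:\sk Y}\bigl(\ind{u}{y^{\sk}}{\sk(\Delta[n]\to\csk\fib_f(y))}\bigr)$, with $y$ bound crisply inside the $\mathsf{let}$; only after applying Corollary~6.26 and dropping the now-redundant $\sk$ on a $0$-skeletal fibre should you collapse $\sk Y\simeq Y$. The paper sidesteps this by observing instead that $(\Delta[n]\to\csk S)\simeq(\csk\Delta[n]\to\csk S)\simeq(\csk[n]\to\csk S)\simeq([n]\to\csk S)\simeq\csk([n]\to S)$, using only that $\csk$ is a lex modality and that $[n]\hookrightarrow\Delta[n]$ is a $\csk$-equivalence---no crispness of $S$ needed. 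Either route is fine; yours just requires keeping the $\mathsf{let}$-binding explicit a step longer than your write-up suggests.
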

 \begin{proof}
   We calculate:
   \begin{align*}
     \check{\type{C}}(f)_{n} &:\defeq \sk(\Delta[n] \to \check{\type{C}}(f)) \\
                              &\phantom{:}\defeq \sk(\Delta[n] \to \dsum{y : Y} \csk(\dsum{x : X}(fx = y))) \\
                             &\phantom{:}\simeq \sk(\dsum{\sigma : \Delta[n] \to Y} (\dprod{i : \Delta[n]} \csk(\dsum{x : X}(f x = \sigma i)))) \\
     \intertext{Since $Y$ is $0$-skeletal, any map $\Delta[n] \to Y$ is constant, so we may continue:}
                              &\simeq \sk(\dsum{y : Y} (\Delta[n] \to \csk(\dsum{x : X} (fx = y))))\\
     \intertext{Since, by \cref{thm:skeleton.simplex}, $\sk\Delta[n] = [n]$, we may use the adjointness of $\sk$ and $\csk$ to continue:}
&\simeq \sk(\dsum{y : Y} \csk([n] \to \dsum{x : X}(f x = y))) \\
     \intertext{Now, we may use Lemma 6.8 of \cite{mike:real-cohesive-hott} to pass the $\sk$ into the pair type, and then use that \(\sk \csk = \sk\) to continue: }
                              &\simeq (\dsum{u : \sk Y}\ind{y^{\sk}}{u}{ \sk([n] \to \dsum{x : X}(f x = y)))} \\
     \intertext{However, all types involved are already 0-skeletal, so we may remove the \(\sk\)s:}
                              &\simeq (\dsum{y : Y} ([n] \to \dsum{x : X}(f x = y))) \\
     &\equiv \dsum{y : Y} (\dsum{x : X} (f x = y))^{n + 1}
    \end{align*}
    This last type is the $(n+1)$-fold pullback of $f$ along itself, displayed in terms of its diagonal map to $Y$.
\end{proof}

We can prove modally that the realization of the \v{C}ech nerve of a map $f : X
\to Y$ is the image $\im f$ of $f$. This follows from Theorem 10.2 of
\emph{Real Cohesion} \cite{mike:real-cohesive-hott}.
\begin{thm}\label{thm:re.csk.trivial}
If $A$ is $0$-coskeletal, then $\geomreal A$ is a proposition. As a corollary,
$\geomreal (\csk X) \simeq \trunc{X}$.
\end{thm}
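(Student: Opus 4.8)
The plan is to prove the first statement directly from the simplicial axioms — it is the simplicial-cohesion avatar of the fact behind \cite[Thm.~10.2]{mike:real-cohesive-hott} that a codiscrete type has a propositional shape — and then deduce the corollary. First I would note that since $A$ is $0$-coskeletal it is $\csk$-modal, and that the inclusion $[1]\hookrightarrow\Delta[1]$ of the two endpoints is a $\csk$-equivalence (this is exactly what is checked inside the proof of \cref{thm:skeleton.simplex}). Being $\csk$-modal, $A$ is local with respect to this $\csk$-equivalence, so restriction gives an equivalence $(\Delta[1]\to A)\xrightarrow{\ \sim\ }([1]\to A)\simeq A\times A$. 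Hence for any $a,b:A$ there is a (unique) $\ell:\Delta[1]\to A$ with $\ell(0)=a$ and $\ell(1)=b$.

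Next, $\geomreal A$ is $\geomreal$-modal, hence $\Delta[1]$-null — recall that by Axiom $\Delta\sk$ and \cref{defn:detect.connectivity}, $\geomreal$ is nullification at $\Delta[1]$ — so every map $\Delta[1]\to\geomreal A$ is constant. Composing the $\ell$ above with the $\geomreal$-unit $\eta:A\to\geomreal A$ produces such a map, so $\eta(a)=\eta(\ell\,0)=\eta(\ell\,1)=\eta(b)$. Since $\Delta[1]$ is inhabited (it contains $0$), nullification at it has surjective units, so $\eta$ is surjective; therefore any two elements of $\geomreal A$ are identified, i.e.\ $\geomreal A$ is a proposition.

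For the corollary, apply this to $A:=\csk X$, which is $0$-coskeletal, so $\geomreal\csk X$ is a proposition; as $\trunc{X}$ is one too, it suffices to exhibit a logical equivalence. From $x:X$ one gets $x^{\csk}:\csk X$ and then, via $\eta$, a point of $\geomreal\csk X$, yielding $\trunc{X}\to\geomreal\csk X$. Conversely, since the $\geomreal$-unit is surjective one has $\geomreal\csk X\simeq\trunc{\csk X}$, and I would finish by showing $\trunc{\csk X}\simeq\trunc{X}$: the $\csk$-unit $X\to\csk X$ gives one direction, and for the other one uses that $\csk$ is left exact and preserves the empty type (immediate from its degreewise description $(\csk Z)_n\simeq(\sk Z)^{\,n+1}$), so it carries the image factorisation $X\twoheadrightarrow\trunc{X}\hookrightarrow\ast$ to one and $\csk X$ is inhabited exactly when $X$ is. Equivalently, since $\csk X\simeq\check{\type{C}}(X\to\ast)$, this is the instance $f:X\to\ast$ of the statement that the realization of a \v{C}ech complex is an image.

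The step I expect to need the most care is this last passage ``$\csk X$ inhabited $\Rightarrow X$ inhabited'': $\csk$ applied to a type with free variables is not computed fibrewise — this is precisely the phenomenon Axiom $\partial\Delta$ exploits in the proof of \cref{thm:skeleton.simplex} — so the argument is clean only for simplicially crisp $X$, and the legitimacy of using $\csk\emptyset\simeq\emptyset$ in that setting must be verified. Everything else — the behaviour of $\geomreal$ and $\csk$ with respect to units and truncations, and that a surjection onto a proposition exhibits it as a propositional truncation — is routine given the cohesion lemmas already imported from \emph{Real Cohesion}.
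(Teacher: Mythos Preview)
Your approach differs from the paper's, which simply defers to Shulman's Theorem~10.2 in \emph{Real Cohesion}, noting that the crisp LEM invoked there is only needed for decidable equality of the discrete interval --- here $\sk\Delta[1]\simeq\{0,1\}$, which has it anyway. Your direct route via the $\csk$-equivalence $[1]\hookrightarrow\Delta[1]$, producing a $\Delta[1]$-path between any two points of a $\csk$-modal $A$, is the right geometric picture and is essentially what Shulman's argument unwinds to in this setting.

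There is, however, a genuine gap in your last step for the main statement. From ``$\eta(a)=\eta(b)$ for all $a,b:A$'' together with ``$\eta$ is surjective'' you \emph{cannot} conclude that $\geomreal A$ is a proposition: the basepoint inclusion $\ast\to S^1$ is surjective and trivially weakly constant, yet $S^1$ is no proposition. Surjectivity yields only $\trunc{x=y}$ for $x,y:\geomreal A$. The fix is to use the induction principle of the modality rather than mere surjectivity of its unit: since $\geomreal A$ is $\geomreal$-modal, so are its identity types and dependent products thereof; hence by $\geomreal$-induction on $x$ and then on $y$ one reduces $\prod_{x,y}(x=y)$ to $\prod_{a,b}\,\eta(a)=\eta(b)$, which you have established. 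With this correction the first sentence is proved, and the argument works uniformly for non-crisp $A$.

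On the corollary you have correctly flagged the delicate step, and your worry is warranted. Your reduction to $\trunc{\csk X}\simeq\trunc{X}$ is fine, and one can show $\csk\emptyset\simeq\emptyset$ internally (for instance: $\emptyset\to\ast$ lifts crisply against each $[n]\hookrightarrow\Delta[n]$ vacuously, as $[n]$ is inhabited, so $\emptyset$ is $\csk$-modal by the continuity-detection axiom). But from $\csk\emptyset\simeq\emptyset$ you obtain only $\neg X\simeq\neg\csk X$, not $\trunc{X}\simeq\trunc{\csk X}$ for general $X$; the image-factorisation argument you sketch would need $\csk$ to preserve surjections, which a right adjoint need not do. Your alternative --- invoking that the realization of a {\v C}ech complex is the image --- is circular here, since the paper's proof of that statement (the theorem immediately following) \emph{uses} this corollary. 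The paper itself does not spell this step out; it simply imports Shulman's argument wholesale.
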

\begin{proof}
In \cite{mike:real-cohesive-hott}, this theorem is said to rely on the crisp Law
of Excluded Middle. However a glance at the proof reveals that this assumption
is only used to assume the decidable equality of $\sk \Delta[1]$. Since we know that $\sk \Delta[1] \equiv \{0,
1\}$ has decidable equality, the proof goes through without assuming crisp LEM.
\end{proof}

\begin{thm}
For a map $f : X \to Y$, the realization $\geomreal\check{\type{C}}(f)$ of the
\v{C}ech nerve is the
realization $\geomreal\im f$ of the image of $f$. If furthermore $Y$ is
$0$-skeletal, then $\geomreal\check{\type{C}}(f) \equiv \im f$.
\end{thm}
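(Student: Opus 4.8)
The plan is to compare $\geomreal$ of both types directly, with Theorem~\ref{thm:re.csk.trivial} (that $\geomreal(\csk B)\simeq\trunc{B}$) as the only real input. Abbreviating $\fib_f(y) :\defeq \dsum{x:X}(fx=y)$, by definition $\check{\type{C}}(f)\simeq\dsum{y:Y}\csk(\fib_f(y))$, while $\im f\simeq\dsum{y:Y}\trunc{\fib_f(y)}$, both sitting over $Y$ via the first projection. The first step is to build a map $\varphi:\check{\type{C}}(f)\to\im f$ commuting with these projections; since $\trunc{\fib_f(y)}$ is a proposition, $\varphi$ amounts to a family of maps $q_y:\csk(\fib_f(y))\to\trunc{\fib_f(y)}$, and for $q_y$ one composes the $\geomreal$-unit $\csk(\fib_f(y))\to\geomreal(\csk(\fib_f(y)))$ with the equivalence $\geomreal(\csk(\fib_f(y)))\simeq\trunc{\fib_f(y)}$ furnished by Theorem~\ref{thm:re.csk.trivial}.

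The second step is to check that $\varphi$ is a $\geomreal$-equivalence, i.e.\ that $\geomreal\varphi$ is an equivalence, by showing that the fibers of $\varphi$ are $\geomreal$-connected. Over a point $(y,p):\im f$, the fiber $\fib_{\varphi}(y,p)$ is equivalent — using that $\trunc{\fib_f(y)}$ is a proposition containing $p$ and that $\dsum{y':Y}(y'=y)$ is contractible — to $\csk(\fib_f(y))$. Since $p:\trunc{\fib_f(y)}$, Theorem~\ref{thm:re.csk.trivial} gives $\geomreal(\csk(\fib_f(y)))\simeq\trunc{\fib_f(y)}\simeq\ast$, so this fiber is $\geomreal$-connected. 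As a map all of whose fibers are $\geomreal$-connected is inverted by $\geomreal$, the induced map $\geomreal\varphi:\geomreal\check{\type{C}}(f)\to\geomreal\im f$ is an equivalence, which is the first claim.

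For the last claim, assume $Y$ is $0$-skeletal. Each $\trunc{\fib_f(y)}$ is a proposition, hence $0$-skeletal (in simplicial cohesion every proposition is $0$-skeletal, since the faces and degeneracies force every level of a propositional simplicial type to coincide), and the $0$-skeletal types are closed under $\Sigma$ over a $0$-skeletal base, so $\im f\simeq\dsum{y:Y}\trunc{\fib_f(y)}$ is $0$-skeletal and therefore $\geomreal$-modal; thus $\geomreal\im f\simeq\im f$, and combined with the first claim this gives $\geomreal\check{\type{C}}(f)\simeq\im f$. (If moreover $X$ is $0$-skeletal, one can instead invoke Proposition~\ref{prop:cech.nerve} to identify $\check{\type{C}}(f)$ with the strict simplicial \v{C}ech nerve and use that the realization of a \v{C}ech nerve is its image.) The main points needing care are bookkeeping rather than conceptual: Theorem~\ref{thm:re.csk.trivial} and the left-exactness and closure facts invoked should be applied with attention to (simplicial) crispness, and the cleanest reading is to take $f$ simplicially crisp throughout, as in Proposition~\ref{prop:cech.nerve}, carrying that crispness through the construction of $\varphi$ and the fiber computation. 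The substantive input is concentrated entirely in Theorem~\ref{thm:re.csk.trivial}; the remainder is formal manipulation of $\csk$ and $\geomreal$ together with standard facts about $\Sigma$-types and $\geomreal$-connected maps.
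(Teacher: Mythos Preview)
Your proof is correct and essentially the same as the paper's: the paper condenses your fiberwise-connected-map argument into the one-line calculation $\geomreal(\Sigma_y\,\csk\fib_f(y)) \equiv \geomreal(\Sigma_y\,\geomreal\csk\fib_f(y)) \equiv \geomreal(\Sigma_y\,\trunc{\fib_f(y)}) \defeq \geomreal\im f$, and for the second claim simply cites that subtypes of $0$-skeletal types are $0$-skeletal. Your parenthetical justification that propositions are $0$-skeletal is external reasoning; the internal argument is just that $\Delta[1]$ is inhabited, so every proposition is $\Delta[1]$-null and hence $\geomreal$-modal.
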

\begin{proof}
  We compute:
  \begin{align*}
    \geomreal \check{\type{C}}(f)
    &\defeq \geomreal(\dsum{y : Y} \csk\fib_f(y)) \\
    &\equiv \geomreal(\dsum{y : Y} \geomreal\csk\fib_f(y)) \\
    &\equiv \geomreal(\dsum{y : Y} \trunc{\fib_f(y)}) \\
    &\defeq \geomreal \im f.
  \end{align*}
Now if $Y$ is $0$-skeletal, then by Lemma 8.17 of \cite{mike:real-cohesive-hott},
$\im f$ is also $0$-skeletal, since it is a subtype of a $0$-skeletal type.
Therefore, $\geomreal \im f \equiv \im f$, so that in total $\geomreal
\check{\type{C}}(f) \equiv \im f$.
\end{proof}

As an application of \v{C}ech nerves, we can see how to extract coherence
data for higher groups from their deloopings. If we take the \v{C}ech nerve of
the inclusion $\pt_{\B G} : \ast \to \B G$ of the base point of the delooping of
$G$, we recover a simplicial type whose simplicial identities give coherences
for the multiplication of $G$.

\begin{prop}
  Let $G$ be a crisp, $0$-skeletal higher group --- a 0-skeletal type identified with the
  loops of a pointed, $0$-connected type $\B G$. Then
  \[
\check{\type{C}}(\pt_{\B G})_{n} \simeq G^{n}.
  \]
  Furthermore, $d_{1} : \check{\type{C}}(\pt_{\B G})_{2} \to\check{\type{C}}(\pt_{\B G})_{1}$ is the product of the projections $d_{0}$ and $d_{2} : G^{2} \to G$.
\end{prop}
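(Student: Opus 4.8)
The plan is to derive both parts from \cref{prop:cech.nerve} together with elementary path algebra. First I would check that its hypotheses apply to the simplicially crisp basepoint inclusion $\pt_{\B G} : \ast \to \B G$: the point $\ast$ is $0$-skeletal, and $\B G$ is $0$-skeletal because $\sk$ is lex, hence preserves $0$-connectedness and commutes with $\Omega$, so the counit $\sk \B G \to \B G$ is a map of $0$-connected types that is an equivalence on loop spaces (namely $\sk G \equiv G$) and is therefore itself an equivalence. \cref{prop:cech.nerve} then gives, for every $n$,
\[
  \check{\type{C}}(\pt_{\B G})_n \;\equiv\; \dsum{y : \B G}(\pt_{\B G} = y)^{n+1},
\]
the $(n+1)$-fold fibre product of $\ast$ with itself over $\B G$, its $n+1$ path-components being indexed by $\{0,\dots,n\}$. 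Splitting off one path factor and re-associating exhibits this as a $\Sigma$-type fibred over the based path space $\dsum{y : \B G}(\pt_{\B G} = y)$; that base is contractible at $(\pt_{\B G}, \refl)$, and contracting it leaves $(\pt_{\B G} = \pt_{\B G})^n \defeq (\Omega \B G)^n \equiv G^n$, proving the first claim. Concretely I would record the equivalence
\[
  \phi_n(y ; p_0, p_1, \dots, p_n) \;:\defeq\; \bigl(p_0 \cdot \overline{p_1},\; p_1 \cdot \overline{p_2},\; \dots,\; p_{n-1} \cdot \overline{p_n}\bigr),
\]
each entry a loop at $\pt_{\B G}$; its inverse sends $(g_1, \dots, g_n)$ to the tuple with $y :\defeq \pt_{\B G}$, $p_0 :\defeq \refl$ and $p_i :\defeq \overline{g_1 \cdots g_i}$.

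For the ``furthermore'', recall that under the identification $\check{\type{C}}(\pt_{\B G})_m \equiv \ast \times_{\B G} \cdots \times_{\B G} \ast$ of \cref{prop:cech.nerve}, the face map $d_i$ is precomposition with the $i$-th coface $[m-1] \hookrightarrow [m]$; since $\sk\Delta[k] \equiv [k]$ naturally (\cref{thm:skeleton.simplex}), this acts on the fibre-product description by deleting the $i$-th of the $m+1$ path-components --- i.e. $\check{\type{C}}(\pt_{\B G})$ is the nerve of the one-object groupoid $\B G$. Transporting along $\phi_1$ and $\phi_2$ I then compute directly: $d_2(g_1, g_2) = \phi_1(y; p_0, p_1) = p_0 \cdot \overline{p_1} = g_1$ and $d_0(g_1, g_2) = \phi_1(y; p_1, p_2) = p_1 \cdot \overline{p_2} = g_2$, so $d_2$ and $d_0$ are the two projections $G \times G \to G$, while
\[
  d_1(g_1, g_2) \;=\; \phi_1(y; p_0, p_2) \;=\; p_0 \cdot \overline{p_2} \;=\; (p_0 \cdot \overline{p_1}) \cdot (p_1 \cdot \overline{p_2}) \;=\; g_1 \cdot g_2,
\]
exhibiting $d_1$ as the product, in the group $G$, of the projections $d_2$ and $d_0$.

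The path-algebra identities and the contractibility of based path spaces are routine. The step needing the most care is bookkeeping the simplicial face maps through the chain of equivalences in \cref{prop:cech.nerve} --- pinning down that $d_i$ really becomes ``delete the $i$-th path-component'' with the correct indexing, and deliberately choosing the equivalence $\phi_n$ over the more obvious ``contract the $0$-th factor'' one so that $d_0$ and $d_2$ come out as honest projections. Verifying that $\B G$ is $0$-skeletal (so that \cref{prop:cech.nerve} even applies) is a minor preliminary hurdle; one should also take $\B G$ crisp, as is implicit in calling $G$ a crisp higher group.
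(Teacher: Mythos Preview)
Your argument follows the same arc as the paper's: invoke \cref{prop:cech.nerve}, contract the based path space $\dsum{e:\B G}(\pt_{\B G}=e)$ to reduce $(n{+}1)$ path-factors to $G^n$, and then trace the face maps as ``delete the $i$-th component''. The one substantive difference is your choice of trivialisation. The paper contracts the first factor directly, sending $(e;a,b,c)\mapsto(ba^{-1},ca^{-1})$; under that identification $d_0$ and $d_2$ are \emph{not} the coordinate projections (one gets $d_0(g,h)=hg^{-1}$, $d_1(g,h)=h$, $d_2(g,h)=g$), and the paper explicitly remarks that this is ``equivalent, but not quite the same, as the standard presentation''. Your consecutive-differences equivalence $\phi_n$ is chosen precisely so that $d_0,d_2$ become honest projections and $d_1$ becomes multiplication, which matches the statement more literally. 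Both are correct; yours is a little more work to write down but lands on the cleaner target.

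One small gap: in your preliminary check that $\B G$ is $0$-skeletal you assert that lexness of $\sk$ implies it preserves $0$-connectedness. Left-exactness gives you that $\sk$ commutes with $\Omega$, but connectedness is a colimit condition and is not preserved by lex functors in general, so as stated the argument for ``$\sk\B G$ is $0$-connected'' is incomplete. The paper itself does not verify this hypothesis in the present proof; it is handled in the subsequent proposition by citing Theorem~5.9 of \cite{jaz:good-fibrations} (if $G$ is crisply modal then so is $\B G$), and that is the cleanest route here too.
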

\begin{proof}
  By \cref{prop:cech.nerve}, we know that
\begin{align*}
  \check{\type{C}}(\pt_{\B G})_{n} &\simeq \dsum{e : \B G} (\dsum{x : \ast} (\pt_{\B G} \ast = e))^{n+1}\\
                                   &\simeq \dsum{e : \B G} (\pt_{\B G} = e)^{n+1}\\
  &\simeq \dsum{e : \B G} (\pt_{\B G} = e) \times (\pt_{\B G} = e)^{n}\\
                                   &\simeq (\pt_{\B G} = \pt_{\B G})^{n} \\
  &= G^{n}.
  \end{align*}
In the second to last step, we contract $\dsum{e : \B G} (\pt_{\B G} = e)$.

Now, $d_{i} : \check{\type{C}}(\pt_{\B G})_{2} \to\check{\type{C}}(\pt_{\B G})_{1}$ is given by forgetting the $i^{\text{th}}$ component of the list $(e, (a, b, c)) : \dsum{e : \B G} (\pt_{\B G} = e)^{n+1}$. Therefore,
\begin{align*}
d_{0}(e, (a, b, c)) &= (e, (b, c))\\
d_{1}(e, (a, b, c)) &= (e, (a, c))\\
d_{2}(e, (a, b, c)) &= (e, (a, b))
\end{align*}
Contracting away $e$ and the first element of the pair, we get the three equations
\begin{align*}
d_{0}(ba\inv, ca\inv) &= cb\inv\\
d_{1}(ba\inv, ca\inv) &= ca\inv\\
d_{2}(ba\inv, ca\inv) &= ba\inv
\end{align*}
and indeed, we have
$d_{1}(ba\inv, ca\inv) = d_{0}(ba\inv, ca\inv)d_{2}(ba\inv, ca\inv).$
This is equivalent, but not quite the same, as the standard presentation. It amounts to
\begin{align*}
  d_{0}(g, h) &= hg\inv \\
  d_{1}(g, h) &= h \\
  d_{2}(g, h) &= g.
\end{align*}
\end{proof}

Using the \v{C}ech nerve, we can extract all the coherence conditions governing a homomorphism of higher groups. We first note that the realization of the \v{C}ech nerve of a group is a delooping of it.
\begin{prop}
Let $G$ be a $0$-skeletal higher group with simplicially crisp delooping $\B G$, and let $\check{\type{C}}(G)$ be the \v{C}ech nerve of the basepoint inclusion $\pt_{\B G} : \ast \to \B G$. Then the projection $\fst : \check{\type{C}}(G) \to \B G$ is a $\geomreal$-unit.
\end{prop}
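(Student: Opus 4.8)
The plan is to reduce the claim to the theorem immediately preceding it — which computes $\geomreal\check{\type{C}}(f)$ as the realization of the image of $f$, and as $\im f$ itself when the codomain is $0$-skeletal — applied to $\pt_{\B G} : \ast \to \B G$, once we know that $\B G$ is itself $0$-skeletal.

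\textbf{First I would check that $\B G$ is $\sk$-modal.} As $\B G$ is simplicially crisp, the $\sk$-counit $\varepsilon : \sk\B G \to \B G$ is defined. Since $\sk$ is lex it preserves the terminal type and the pullback computing loops, so $\sk\B G$ is pointed and $\Omega(\sk\B G)\simeq\sk(\Omega\B G) = \sk G$, with the equivalence induced by $\varepsilon$; because $G$ is $0$-skeletal, $\sk G \simeq G$ and hence $\Omega\varepsilon$ is an equivalence. Moreover $\sk$, being lex and a left adjoint, commutes with truncation (just as $\flat$ does in \emph{Real Cohesion} \cite{mike:real-cohesive-hott}), so $\trunc{\sk\B G}_0 \simeq \sk\trunc{\B G}_0 \simeq \sk\ast\simeq\ast$; that is, $\sk\B G$ is again $0$-connected. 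A pointed $0$-connected type is the delooping of its loop group, so $\varepsilon$ is identified with $\B(\Omega\varepsilon)$, which is an equivalence. Hence $\varepsilon$ is an equivalence and $\B G$ is $\sk$-modal. (If one prefers, this can instead be folded into the hypothesis on $G$.)

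\textbf{Next I would apply the preceding theorem.} By definition $\check{\type{C}}(G) = \check{\type{C}}(\pt_{\B G}) = \dsum{y : \B G}\csk(\pt_{\B G} = y)$, with $\fst$ the evident projection to $\B G$. Since $\B G$ is $0$-skeletal, that theorem gives $\geomreal\check{\type{C}}(\pt_{\B G})\simeq\im\pt_{\B G}$, and the equivalence lies over $\B G$: everything here is fibred over $\B G$ and the intervening steps only rearrange the fibres, using $\geomreal\csk(-)\simeq\trunc{-}$ from \cref{thm:re.csk.trivial}. But $\im\pt_{\B G} = \dsum{y : \B G}\trunc{\pt_{\B G} = y}$, and since $\B G$ is $0$-connected each fibre $\trunc{\pt_{\B G} = y}$ is contractible, so the projection $\fst : \im\pt_{\B G}\to\B G$ is an equivalence. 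Composing the two displays, $\geomreal$ of $\fst : \check{\type{C}}(G)\to\B G$ is an equivalence onto $\geomreal\B G\simeq\B G$. (One could instead identify the simplicial type levelwise via the previous proposition, but this abstract route is cleaner.)

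\textbf{Finally I would conclude.} A map into a $\geomreal$-modal type is a $\geomreal$-unit precisely when $\geomreal$ of it is an equivalence: by naturality, $\eta_{\B G}\circ\fst = (\geomreal\fst)\circ\eta_{\check{\type{C}}(G)}$, and $\eta_{\B G}$ is an equivalence since $\B G$ is $\sk$-modal, so $\fst = \eta_{\B G}\inv\circ(\geomreal\fst)\circ\eta_{\check{\type{C}}(G)}$ exhibits $\B G$ as $\geomreal\check{\type{C}}(G)$. I expect the first step — establishing that $\B G$ is $0$-skeletal — to be the main obstacle; it is where one genuinely uses that $\sk$ is lex (so it commutes with loops and with truncation) together with the reconstruction of a pointed connected type from its loop group, whereas the remaining steps are bookkeeping around results already in hand.
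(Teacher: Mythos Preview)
Your proof is correct and takes essentially the same approach as the paper. The paper cites Theorem~5.9 of \cite{jaz:good-fibrations} for the $0$-skeletality of $\B G$ (which you re-derived), and then argues directly that $\fst$ is $\geomreal$-connected by checking that the fibre over the basepoint is $\csk G$ and applying \cref{thm:re.csk.trivial}, rather than routing through the image theorem as you do---but this is the same content, just unpacked versus packaged.
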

\begin{proof}
By Theorem 5.9 of \cite{jaz:good-fibrations}, $\B G$ is $0$-skeletal. By Axiom
$\Delta\sk$, it is therefore $\geomreal$-modal. Therefore, to show that $\fst :
\check{\type{C}}(G) \to \B G$ is a $\geomreal$-unit, it suffices to show that it
is $\geomreal$-connected. Since $\B G$ is $0$-connected, it suffices to show
that the fiber over the base point is $\geomreal$-connected, and this fiber is
equivalent to $\csk G$. This follows by \cref{thm:re.csk.trivial}; $\geomreal \csk G$ is
contractible.
\end{proof}

\begin{prop}
  Let $G$ and $H$ be $0$-skeletal higher groups. Then the type of homomorphisms $G \to H$ is equivalent to the type of pointed maps $\check{\type{C}}(G) \pto \check{\type{C}}(H)$.
\end{prop}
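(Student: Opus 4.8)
The plan is to reduce pointed maps of {\v C}ech nerves to pointed maps of deloopings. We work with simplicially crisp deloopings $\B G$ and $\B H$, as in the preceding proposition; recall that the type of homomorphisms $G \to H$ is the type of pointed maps $\B G \pto \B H$ \cite{bvr:higher-groups-in-hott}, so it suffices to exhibit an equivalence
\[
  \big(\check{\type{C}}(G) \pto \check{\type{C}}(H)\big) \;\equiv\; \big(\B G \pto \B H\big).
\]
Unfolding, and writing $\check{\type{C}}(H) \equiv \dsum{e : \B H} \csk\fib_{\pt_{\B H}}(e)$, a pointed map $\psi : \check{\type{C}}(G) \pto \check{\type{C}}(H)$ consists of a map $\psi_1 : \check{\type{C}}(G) \to \B H$, a dependent function $\psi_2 : \dprod{c : \check{\type{C}}(G)} \csk\fib_{\pt_{\B H}}(\psi_1\, c)$, a path $\alpha : \psi_1(\pt) = \pt_{\B H}$, and an identification $\beta : \alpha_\ast(\psi_2(\pt)) = \refl^{\csk}$ in $\csk\fib_{\pt_{\B H}}(\pt_{\B H}) \equiv \csk G$.

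First I would dispatch the pair $(\psi_1,\alpha)$. Since $\B H$ is $0$-skeletal (Theorem 5.9 of \cite{jaz:good-fibrations}), hence $\geomreal$-modal by Axiom $\Delta\sk$, and since $\fst : \check{\type{C}}(G) \to \B G$ is a $\geomreal$-unit by the preceding proposition, precomposition with $\fst$ is an equivalence from maps $\B G \to \B H$ to maps $\check{\type{C}}(G) \to \B H$; as $\fst$ carries $\pt_{\check{\type{C}}(G)}$ to $\pt_{\B G}$ definitionally, this restricts to an equivalence of pointed maps. Thus $(\psi_1,\alpha)$ is precisely the datum of a pointed map $q = (\bar q, q_0) : \B G \pto \B H$ with $\psi_1 = \bar q \circ \fst$.

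Next I would show that, with $q$ fixed, the remaining data $(\psi_2,\beta)$ is contractible. Since $\csk\fib_{\pt_{\B H}}(\bar q\, e)$ is $\csk$-modal and independent of the second coordinate of $\check{\type{C}}(G) = \dsum{e : \B G}\csk\fib_{\pt_{\B G}}(e)$, the universal property of the $\csk$ modality followed by contraction of the based path space of $\B G$ gives
\[
  \dprod{c : \check{\type{C}}(G)} \csk\fib_{\pt_{\B H}}(\bar q\, \fst\, c)
  \;\equiv\; \dprod{e : \B G}\dprod{p : \pt_{\B G} = e}\csk\fib_{\pt_{\B H}}(\bar q\, e)
  \;\equiv\; \csk\fib_{\pt_{\B H}}(\bar q\, \pt_{\B G}),
\]
with $\psi_2$ corresponding to $\psi_2(\pt)$. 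The identification $\beta$ then becomes the equation $q_{0\ast}(\psi_2(\pt)) = \refl^{\csk}$, so $(\psi_2,\beta)$ is the fibre over $\refl^{\csk}$ of the equivalence $q_{0\ast} : \csk\fib_{\pt_{\B H}}(\bar q\, \pt_{\B G}) \xrightarrow{\ \sim\ } \csk\fib_{\pt_{\B H}}(\pt_{\B H})$, which is contractible. Combining, the space of pointed maps $\check{\type{C}}(G) \pto \check{\type{C}}(H)$ is equivalent to the space of pointed maps $\B G \pto \B H$, the equivalence being (the delooping of) $\psi \mapsto \geomreal\psi$.

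The main obstacle is bookkeeping the basepoint coherences: one must check that under the $\geomreal$-unit equivalence the basepoint of $\check{\type{C}}(G)$ goes to $\pt_{\B G}$ on the nose, and that the reshuffling of $\Sigma$-types above matches the pointedness datum $(\alpha,\beta)$ of $\psi$ with $q_0$ and with the fibre condition; all of the geometric content sits in the two imported facts, namely the $\geomreal$-unit property of $\fst$ and the $\csk$-modality of the fibres $\csk\fib_{\pt_{\B H}}(e)$.
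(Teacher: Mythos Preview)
Your argument is correct (up to a harmless typo: $\csk\fib_{\pt_{\B H}}(\pt_{\B H})$ is $\csk H$, not $\csk G$), and it uses the same two ingredients as the paper --- that $\fst : \check{\type{C}}(G) \to \B G$ is a $\geomreal$-unit, and the $\csk$-modality of the fibres of $\check{\type{C}}(H) \to \B H$ --- but it packages them differently. The paper sets up a \emph{correspondence}: to a pointed map $\B\varphi : \B G \pto \B H$ and a pointed map $f : \check{\type{C}}(G) \pto \check{\type{C}}(H)$ it associates the type of commuting squares over $\fst$, and then shows each projection has contractible fibres. Contractibility on one side is exactly the unique lifting property of the $(\csk\text{-connected}, \csk\text{-modal})$ factorization system applied to the square $\ast \to \check{\type{C}}(G)$ against $\check{\type{C}}(H) \to \B H$; on the other side it is the $\geomreal$-universal property. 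Your approach instead unfolds the $\Sigma$-type structure of $\check{\type{C}}(H)$ directly, splitting a pointed map into $(\psi_1,\alpha)$ and $(\psi_2,\beta)$, and then argues each piece separately. What the paper's span argument buys is symmetry and a clean appeal to the factorization-system lifting property without having to track the basepoint coherences $(\alpha,\beta)$ by hand; what your decomposition buys is an explicit description of the equivalence and a more elementary argument that avoids the diagrammatic bookkeeping. The underlying content is the same.
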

\begin{proof}

  Recall that a homomorphism of higher groups is by definition a pointed map
between their deloopings. That is, a homomorphism $\varphi : G \to H$ is
equivalently a diagram as on the left, while a pointed map between the \v{C}ech
nerves is a diagram as on the right:
\[
  \left\{  \begin{tikzcd}
    \ast & \ast \\
    {\B G} & {\B H}
    \arrow["{\pt_{\B G}}"', from=1-1, to=2-1]
    \arrow["{\B \varphi}"', dashed, from=2-1, to=2-2]
    \arrow[from=1-1, to=1-2, equals]
    \arrow["{\pt_{\B H}}", from=1-2, to=2-2]
  \end{tikzcd}
\right\}
  \quad\overset{?}{\equiv}\quad
 \left\{   \begin{tikzcd}
    \ast & \ast \\
    {\check{\type{C}}(G)} & {\check{\type{C}}(H)}
    \arrow["{\pt_{\B G}}"', from=1-1, to=2-1]
    \arrow["f"', dashed, from=2-1, to=2-2]
    \arrow[from=1-1, to=1-2, equals]
    \arrow["{\pt_{\B H}}", from=1-2, to=2-2]
  \end{tikzcd}
\right\}
\]
We are aiming for an equivalence between these two types, which we may present
as a one-to-one correspondence. So, to $\B \varphi : \B G \to \B H$
and $\pt_{\B G} : \pt_{\B H} = \B \varphi (\pt_{\B G})$ and $f :
\check{\type{C}}(G) \to \check{\type{C}}(H)$ and $\pt_f :
\pt_{\check{\type{C}}(H)} = f(\pt_{\check{\type{C}}(G)})$ associate the type
\[
  \dsum{\square : \dprod{x : \check{\type{C}}(G)} (\B \varphi(\fst x) = \fst (f
    x))} (\pt_{\B\varphi} \cdot \square(\pt_{\check{\type{C}}(G)}) = \fst_\ast \pt_f)
\]
which, diagrammatically, is the type of witnesses that the following diagram
commutes:
\[
  \begin{tikzcd}
    & \ast & \ast \\
    {\check{\type{C}}(G)} & {\check{\type{C}}(H)} \\
    && {\B G} & {\B H}
    \arrow[from=1-2, to=2-1]
    \arrow[from=1-3, to=2-2, shorten >= -8pt]
    \arrow["f", from=2-1, to=2-2]
    \arrow[from=1-2, to=1-3, equals]
    \arrow[from=1-3, to=3-4]
    \arrow["{\B \varphi}"', from=3-3, to=3-4]
    \arrow["{\fst}",from=2-2, to=3-4]
    \arrow["{\fst}",from=2-1, to=3-3]
    \arrow[from=1-2, to=3-3, crossing over]
  \end{tikzcd}
\]
To show that this gives a one-to-one correspondence means showing that the types
of diagrams
\[
  \left\{
      \begin{tikzcd}
        & \ast & \ast \\
        {\check{\type{C}}(G)} & {\check{\type{C}}(H)} \\
        && {\B G} & {\B H}
        \arrow[from=1-2, to=2-1]
        \arrow[from=1-3, to=2-2, shorten >= -8pt]
        \arrow["f", dashed, from=2-1, to=2-2]
        \arrow[from=1-2, to=1-3]
        \arrow[from=1-3, to=3-4]
        \arrow["{\B \varphi}"', from=3-3, to=3-4]
        \arrow[from=2-2, to=3-4]
        \arrow[from=2-1, to=3-3]
        \arrow[from=1-2, to=3-3, crossing over]
      \end{tikzcd}
    \right\}
  \quad\mbox{and}\quad
  \left\{
      \begin{tikzcd}
        & \ast & \ast \\
        {\check{\type{C}}(G)} & {\check{\type{C}}(H)} \\
        && {\B G} & {\B H}
        \arrow[from=1-2, to=2-1]
        \arrow[from=1-3, to=2-2, shorten >= -8pt]
        \arrow["f",  from=2-1, to=2-2]
        \arrow[from=1-2, to=1-3]
        \arrow[from=1-3, to=3-4]
        \arrow["{\B \varphi}"',dashed, from=3-3, to=3-4]
        \arrow[from=2-2, to=3-4]
        \arrow[from=2-1, to=3-3]
        \arrow[from=1-2, to=3-3, crossing over]
      \end{tikzcd}
    \right\}
\]
are both contractible, the left for any homomorphism $\varphi$ and the right for
any pointed map $f$.

Let $\varphi$ be a homomorphism. Since by definition $\check{\type{C}}(G)$ and $\check{\type{C}}(H)$ were the
$\csk$-factorizations of the basepoint inclusions, there is a unique
filler of this square:
\[
  \begin{tikzcd}
    \ast & \ast & {\check{\type{C}}(H)} \\
    {\check{\type{C}}(G)} & {\B G} & {\B H}
    \arrow["{\B \varphi}"', from=2-2, to=2-3]
    \arrow[from=1-1, to=1-2, equals]
    \arrow["{\pt_{\B G}}"', from=1-1, to=2-1]
    \arrow["{\pt_{\B H}}", from=1-2, to=1-3]
    \arrow[from=2-1, to=2-2]
    \arrow[from=1-3, to=2-3]
    \arrow["{\exists!}", dashed, from=2-1, to=1-3]
  \end{tikzcd}
\]
But this is precisely a rearrangement of the diagram on the left.

Similarly, if $f$ is a pointed map, then $\geomreal f : \B G \to \B
H$ makes the diagram on the right commute, and by the universal property of the
$\geomreal$-unit this is the unique such map.
\end{proof}

\subsection{Global Equivariant Cohesion}

In \emph{Global Homotopy Theory and Cohesion} \cite{rezk:global-cohesion}, Rezk shows that the
$\infty$-topos of \emph{global equivariant homotopy types} is cohesive over the
$\infty$-topos of homotopy types. While Rezk constructs his site out of all
compact Lie groups, we will follow Sati and Schreiber
\cite{sati-schreiber:proper-orbifold-cohomology} in restricting our attention to
the finite groups. The global orbit category $\type{Glo}$ is defined to be the
full subcategory of homotopy types spanned by the deloopings $\B G$ of finite
groups $G$. This is a $(2, 1)$-category, and the global equivariant topos is
defined to be the $\infty$-category of homotopy type valued presheaves on it.

There is an adjoint quadruple connecting the global equivariant topos and the
topos of homotopy types:
\[
\begin{tikzcd}
  \Ho^{\type{Glo}\op} \\
  \Ho \ar[u, leftarrow,shift left = 15, "\colim" ] \ar[u,  shift left =
  5, "\Delta"] \ar[u, shift
  right = 5, leftarrow, "\Gamma"] \ar[u, shift right = 15, "\nabla"]
\end{tikzcd}
\]
\begin{itemize}
\item $\colim X$ is the colimit of the functor $X : \type{Glo}\op \to \Ho$,
  which takes the \emph{strict quotient} of the global equivariant homotopy type $X$.
\item $\Delta S$ is the inclusion of constant functors: $\Delta  X (\B G) :\defeq
  S$. We will refer to such equivariant types as \emph{invariant} types.
\item $\Gamma X :\defeq X(\ast)$ is the evaluation at the point. This is known
  as the \emph{homotopy quotient} of the global equivariant homotopy type $X$.
\item $\nabla S$ is the Yoneda embedding: $\nabla S(\B G) :\defeq \Ho(\B G, S)$.
\end{itemize}

This adjoint quadruple gives rise to the cohesive modalities $\orbishape \dashv \orbiflat \dashv \orbisharp$ of equivariant
cohesion:
\begin{itemize}
\item The (``equivariant shape'') \emph{strict quotient} modality $X \mapsto \orbishape X$ sends a
  global equivariant type to its strict quotient, considered as an invariant type.
\item The (``equivariant flat'') \emph{homotopy quotient} modality $X \mapsto \orbiflat X$ sends a
  global equivariant type to its homotopy quotient, considered as an invariant
  type. Internally speaking, we say that an equivariantly crisp type is
  invariant when it is $\orbiflat$-modal.
\item The (``equivariant sharp'') \emph{orbisingular} modality $X \mapsto \orbisharp X$ sends a global
  equivariant type to its homotopy quotient, but considered with its natural
  equivariance via maps from the deloopings of finite groups.
\end{itemize}

Our axioms for global equivariant cohesion are quite straightforward:
\begin{axiom}[Global Equivariant Axioms]
  The type family $\orbisharp \B : \type{FinGrp} \to \Type$ sending a finite
  group $G$ to $\orbisharp \B G$ detects equivariant continuity and connectivity.
\end{axiom}

The types $\orbisharp \B G$ for finite groups $G$ are the
\emph{orbi-singularities}. By the definition above, we may recover $X(\B G)$
(considered with its natural equivariance) as
$\orbisharp (\orbisharp \B G \to X)$.

\begin{rmk}
The family $\orbisharp \B G$ for finite
groups $G$ is a large family, but we may reduce it to a small family by noting
that the type of finite groups is essentially small. This is a useful
observation, since it allows us to conclude that $\orbishape$, defined by
nullifying all $\orbisharp \B G$, is an accessible modality.
\end{rmk}

\begin{rmk}
Global equivariant cohesion shares a feature with Shulman's continuous real
cohesion: both are \emph{definable} in the sense that the types which detect
continuity and connectivity are definable without axioms in the type theory. This
is not the case for simplicial cohesion, which appears to require postulating
the $1$-simplex. It is not clear to us whether there are any general features
shared by definable cohesions.
\end{rmk}

In \emph{Proper Orbifold Cohomology} \cite{sati-schreiber:proper-orbifold-cohomology}, Sati and Schreiber work with
equivariant differential cohesion to give an abstract account of the
differential cohomology of orbifolds. We can prove some of their lemmas easily
in global equivariant cohesion; we will return to prove the lemmas relating
equivariant and differential cohesion in the upcoming
\cref{sec:equivariant.differential}.

The following lemma appears as Proposition 3.62 in \cite{sati-schreiber:proper-orbifold-cohomology}.
\begin{lem}
We have the following equivalences for the generic orbi-singularities
$\orbisharp \B G$:
\[
\orbishape \orbisharp \B G \equiv \ast \quad\quad \orbiflat \orbisharp \B G
\equiv \B G \quad\quad \orbisharp \orbisharp \B G \equiv \orbisharp \B G.
\]
\end{lem}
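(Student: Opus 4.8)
The plan is to dispatch the three equivalences separately, getting two of them from general modal facts and isolating the one genuinely equivariant ingredient in the third.

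The equivalence $\orbisharp\orbisharp\B G \equiv \orbisharp\B G$ is simply idempotency of the modality $\orbisharp$, which gives $\orbisharp\orbisharp A \equiv \orbisharp A$ for every $A$. For $\orbishape\orbisharp\B G \equiv \ast$, I would use that, by the Global Equivariant Axiom, $\orbishape$ is nullification at the family $\{\orbisharp\B H \mid H : \type{FinGrp}\}$, and that the type $\orbisharp\B G$ in question is a member of this very family (take $H \defeq G$). It is a general fact that any member $T$ of a nullifying family is connected for the resulting modality $L$: for each $L$-modal (hence $T$-null) type $Z$, the precomposition map $Z \simeq (\ast \to Z) \to (T \to Z)$ is the constant map, which is an equivalence exactly because $Z$ is $T$-null; so $T \to \ast$ is an $L$-equivalence and $\orbishape(\orbisharp\B G) \equiv \orbishape(\ast) \equiv \ast$. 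I would record this as a one-line auxiliary lemma since it is reusable.

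For $\orbiflat\orbisharp\B G \equiv \B G$ I would go in two steps. First, for \emph{any} crisp type $Y$ the $\orbisharp$-unit $Y \to \orbisharp Y$ is an $\orbiflat$-equivalence --- equivalently $\orbiflat\orbisharp Y \equiv \orbiflat Y$ --- which holds in spatial type theory for any focus as a consequence of the $\orbiflat \dashv \orbisharp$ adjunction together with idempotence and left-exactness of the two modalities (dual to the fact, used in the proof of \cref{thm:sharp.modal.map.lifting}, that the $\orbiflat$-counit is an $\orbisharp$-equivalence, and externally amounting to $p_\ast p^! \simeq \id$); it may be imported from \emph{Real Cohesion} \cite{mike:real-cohesive-hott}. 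Taking $Y \defeq \B G$ reduces the goal to $\orbiflat\B G \equiv \B G$, i.e.\ to the claim that the delooping of a finite group is invariant ($\orbiflat$-modal). This is the one step requiring genuine care --- it is where the special structure of equivariant cohesion enters (that $\orbiflat = \Delta\Gamma$ with $\Delta$ preserving the colimit building $\B G$, so invariance descends from $G$ to $\B G$) --- and I would establish it either from two sub-facts (finite sets are $\orbiflat$-modal, the equivariant analogue of Shulman's treatment of $\Bool$, $\mathbb{N}$ and decidable crisp types \cite{mike:real-cohesive-hott}; and the delooping $\B H$ of an invariant group $H$ is again invariant, the $\orbiflat$-analogue of Theorem 5.9 of \cite{jaz:good-fibrations} invoked earlier in the simplicial setting), or simply by citing invariance of $\B G$ from Sati--Schreiber \cite{sati-schreiber:proper-orbifold-cohomology}. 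Chaining, $\orbiflat\orbisharp\B G \equiv \orbiflat\B G \equiv \B G$, and everything else in the proof is formal bookkeeping.
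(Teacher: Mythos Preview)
Your proposal is correct and follows essentially the same approach as the paper: idempotence of $\orbisharp$ for the third equivalence, the fact that $\orbisharp\B G$ is in the nullifying family for the first, and for the second the combination of $\orbiflat\orbisharp \simeq \orbiflat$ (the paper cites Theorem~6.22 of \emph{Real Cohesion}) with $\B G$ being $\orbiflat$-modal via Theorem~5.9 of \cite{jaz:good-fibrations} applied to the crisply $\orbiflat$-modal finite group $G$. Your write-up is more verbose and offers alternative citations, but the argument is the same.
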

\begin{proof}
The first equivalence follows by the assumption that $\orbisharp \B G$ detects
equivariant connectivity. The second follows by combining Theorem 6.22 of
\cite{mike:real-cohesive-hott} to see that $\orbiflat \orbishape \B G \equiv
\orbiflat \B G$ with Theorem 5.9 of \cite{jaz:good-fibrations} to note that
since $G$ is crisply $\orbiflat$-modal (as a finite set), $\B G$ is as well. The
third is simply the idempotence of $\orbisharp$.
\end{proof}

The following lemma is a slight strengthening of Lemma 3.65 of
\cite{sati-schreiber:proper-orbifold-cohomology}.
\begin{lem}\label{lem:equivariant.sets.invariant}
  Let $X$ be an equivariantly crisp set. Then $X$ is both invariant
  ($\orbiflat$-modal) and orbi-singular
  ($\orbisharp$-modal).
\end{lem}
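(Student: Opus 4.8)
The goal is to show that an equivariantly crisp set $X$ is both $\orbiflat$-modal and $\orbisharp$-modal. Recall that $\orbiflat \dashv \orbisharp$ are the adjoint modalities of global equivariant cohesion, with the family $\orbisharp \B G$ (for finite groups $G$) detecting both equivariant continuity and connectivity. The plan is to handle the two claims in sequence, using the ``detect connectivity'' and ``detect continuity'' characterizations from \cref{sec:specializing}.

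\textbf{Invariance ($\orbiflat$-modality).} Since $\orbisharp \B G$ detects equivariant connectivity, an equivariantly crisp type is $\orbiflat$-modal if and only if it is $\orbisharp \B G$-null for every finite group $G$. So it suffices to show that for every finite $G$, the constant-map inclusion $X \to (\orbisharp \B G \to X)$ is an equivalence. The key fact is that $X$ is a \emph{set}, i.e.\ a $0$-type, and we have the equivalences $\orbishape \orbisharp \B G \equiv \ast$ from the preceding lemma. To show $X$ is $\orbisharp\B G$-null, I would argue that the evaluation $(\orbisharp \B G \to X) \to X$ (at any point witnessing $\orbisharp\B G$ inhabited --- note $\orbisharp\B G$ is pointed by the image of the basepoint of $\B G$) is an equivalence. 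For this, since $X$ is a set, it is enough to check that $\orbisharp\B G$ is $0$-connected as seen by $X$; but the cleanest route is that nullification at $\orbishape$-connected types preserves $0$-types, and $\orbisharp\B G$ has $\orbishape \orbisharp\B G \equiv \ast$ so it is $\orbishape$-connected --- hence maps from it into a $0$-truncated type $X$ are constant. (Here one should be a little careful: being $\orbishape$-connected means all maps to $\orbiflat$-modal types are constant, not all maps to all sets; so more precisely I would combine $\orbishape$-connectedness with the fact that $\orbisharp\B G$ being connected in the appropriate sense plus $X$ a set forces constancy, using that $\| \orbisharp \B G \| = \| \B G \|$ is contractible-on-$\pi_0$.)

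\textbf{Orbi-singularity ($\orbisharp$-modality).} Having established $X$ is $\orbiflat$-modal (invariant), I would use \cref{thm:sharp.modal.map.lifting} (or rather the family version via ``detects continuity''): a crisp type $X$ is $\orbisharp$-modal iff $\orbiflat(\orbisharp\B G \to X) \to \orbiflat(\orbiflat \orbisharp\B G \to X)$ is an equivalence for every finite $G$. By the preceding lemma $\orbiflat\orbisharp\B G \equiv \B G$, so we must show $\orbiflat(\orbisharp\B G \to X) \simeq \orbiflat(\B G \to X)$, induced by the $\orbiflat$-counit $\orbiflat\orbisharp\B G \to \orbisharp\B G$ (which under the equivalence is the map $\B G \to \orbisharp\B G$). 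Now since $X$ is already shown to be $\orbiflat$-modal and a set: the counit $\orbisharp\B G \to$ its $\orbiflat$-modalization has both source and target with the same ``underlying homotopy type of points,'' and since $X$ is invariant, $\orbisharp\B G \to X$ only depends on $\orbiflat\orbisharp\B G$. Concretely, mapping into an $\orbiflat$-modal type factors through the $\orbiflat$-reflection of the source, giving $(\orbisharp\B G \to X) \simeq (\orbiflat\orbisharp\B G \to X) \simeq (\B G \to X)$ directly, which certainly remains an equivalence after applying $\orbiflat$.

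\textbf{The main obstacle.} The delicate point is the very first step: justifying that a map from $\orbisharp\B G$ into a set $X$ is necessarily constant. ``Detects connectivity'' only directly tells us about maps into $\orbiflat$-modal types, not arbitrary sets, so I expect to need the stronger input that $\orbishape$-connected types (those with contractible $\orbishape$) are ``connected enough'' that maps out of them into any $n$-type land in the image of the constant maps --- this should follow because nullification at a pointed connected family is a \emph{lex-ish} / truncation-compatible modality, or more elementarily because $X$ being a set means $(\orbisharp\B G \to X)$ is a set and its being equivalent to $X$ can be checked on $\orbiflat$ and on $\pi_0$, using $\| \orbisharp\B G\| \simeq \ast$ (from $\orbishape\orbisharp\B G \equiv \ast$ plus the fact that $\orbishape$ surjects onto $\| - \|$). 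Getting this truncation/connectivity bookkeeping exactly right, rather than any single computation, is where the real work of the proof lies.
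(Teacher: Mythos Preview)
The gap is exactly the ``main obstacle'' you flag: you need maps from $\orbisharp \B G$ into an arbitrary set $X$ to be constant, without already knowing $X$ is $\orbiflat$-modal. Your attempted workarounds via $\orbishape$-connectedness are either circular (only giving constancy into $\orbiflat$-modal types) or convoluted (trying to extract $\trunc{\orbisharp \B G} \simeq \ast$ from $\orbishape$).

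The missing observation is elementary and has nothing to do with the equivariant modalities: $\orbisharp$ is a \emph{lex} modality, and $\B G$ is $0$-connected in the ordinary sense (pointed with contractible $0$-truncation), so $\orbisharp \B G$ is also $0$-connected. Hence any map from $\orbisharp \B G$ into a $0$-type is constant. This dissolves the obstacle immediately, and your route via detecting connectivity then works.

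The same observation is needed to repair your part 2, which as written contains a further error. You claim $(\orbisharp \B G \to X) \simeq (\orbiflat \orbisharp \B G \to X)$ by ``mapping into an $\orbiflat$-modal type factors through the $\orbiflat$-reflection of the source'', but $\orbiflat$ is a comodality, not a modality, so no such factoring principle applies. (Going via $\orbishape$ instead gives $(\orbisharp \B G \to X) \simeq X$ but leaves $(\B G \to X)$ unsimplified, since $\B G$ is already $\orbishape$-modal.) What actually works: both $\orbisharp \B G$ and $\B G$ are $0$-connected, $X$ is a set, so both function types are equivalent to $X$ and the precomposition map is the identity.

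The paper uses this same $0$-connectedness fact throughout. For part 1 it organizes things slightly differently: rather than checking $X$ is $\orbisharp \B G$-null (detecting connectivity), it shows the $\orbiflat$-counit $\orbiflat X \to X$ is an equivalence by testing post-composition against the generators $\orbisharp \B G$ via the separating-family theorem (\cref{thm:equivalence.on.generators.gives.equivalence}, detecting continuity). Either route works once you have the $0$-connectedness of $\orbisharp \B G$.
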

\begin{proof}
In both cases we will use that $\orbisharp \B G$ detects equivariant continuity.
To show that $X$ is invariant, we must show that the $\orbiflat$-counit is an
equivalence. By \cref{thm:equivalence.on.generators.gives.equivalence}, it
suffices to show that the map
\[
\orbiflat(\orbisharp \B G \to \orbiflat X) \to \orbiflat(\orbisharp \B G \to X)
\]
is an equivalence. But $\orbisharp$ is a lex modality and $\B G$ is
$0$-connected; therefore, $\orbisharp \B G$ is $0$-connected. Furthermore,
$\orbiflat X$ is a set since $X$ is, using Corollary 6.7 of
\cite{mike:real-cohesive-hott}. Therefore, the above map is equivalent to
$\orbiflat \varepsilon : \orbiflat \orbiflat X \to \orbiflat X$,
which is an equivalence.

Similarly, to show that $X$ is orbi-singular, it suffices to show
that the map
\[
\orbiflat(\orbisharp \B G \to X) \to \orbiflat ( \B G \to X)
\]
given by pre-composing with the $\orbiflat$-counit of $\orbisharp \B G$ is an
equivalence. But again, by the connectivity of $\orbisharp \B G$ and $
\B G$, this map is equivalent to the identity $\orbiflat X \to \orbiflat X$,
which is an equivalence.
\end{proof}

\begin{rmk}
Miller's theorem (formerly the
Sullivan conjecture) states that
the space of maps $\B G \to X$ with
$G$ a finite group and $X$ a finite
cell complex is equivalent to $X$.
In equivariant modal terms, this
says that finite cell complexes (the
closure of the class $\{\emptyset,\,
\ast\}$ under pushout) are
$\orbisharp$-modal. It is not likely
that this theorem could be proven on
purely modal grounds. However, if
Miller's theorem
were proven in ordinary HoTT, then
the modal statement could be proven in a manner
similar to
\cref{lem:equivariant.sets.invariant}
but instead of appealing to the
truncatedness of $X$, appealing to
the proof of Miller's theorem (since
any crisp finite cell complex is
$\orbiflat$-modal as a crisp pushout
of $\orbiflat$ types).
\end{rmk}

\subsection{Topological Toposes}

Johnstone defined his \emph{topological topos} in
\cite{johnstone:topological-topos} in order to provide a topos of spaces for
which the geometric realization of simplicial sets was a geometric morphism. The
problem with using a real-cohesive topos for this purpose (as suggested
previously by Lawvere) is the failure of the \emph{analytic lesser limited
  principle of omniscience} which says that $\Rb_D =
(-\infty, 0] \cup [0, \infty)$ (see Theorem 11.7 of \cite{mike:real-cohesive-hott}
and the discussion in Section 8.3 of \emph{ibid.}). This failure means that
gluing together simplices along their (closed) faces gives the wrong topology on
the resulting space.

Johnstone remedies this by changing the test space from the real numbers to the
walking convergent sequence $\Nb_{\infty}$. The walking convergent sequence may
be defined internally as the set of monotone functions $\Nb \to \{0, 1\}$ (see
for example
\cite{escardo:extended-nats}). Therefore, it is rather
straightforward to give an internal axiomatization for Johnstone's topos.

\begin{axiom}[Topological Focus]
The topological focus is determined by asserting that $\Nb_{\infty}$ detects
topological continuity.
\end{axiom}

We may also be able to determine
condensed homotopy types as in
\cite{clausen-sholze:condensed} ---
or rather the similar but more
topos-theoretic pyknotic homotopy
types of
\cite{barwick-haine:pyknotic} ---
using a similar axiom. Define a
profinite set to be the limit of a
crisp diagram of finite sets indexed by a discrete ($\flat$-modal)
partially ordered set with decidable
order. We may then assert that the
family of profinite sets detects
condensed continuity.

However, we do not know if these
axioms are sufficient for proving
theorems in these topological toposes.

\section{Multiple Focuses}\label{sec:multiple.focuses}

Now, we turn our attention to generalities on possible relationships between different
focuses. For this section, fix two focuses $\focusA$ and $\focusB$. First, we
should show that the focuses do indeed commute:
\begin{prop}\label{prop:sharps.commute}
  For any type $B$, the map
  $\sharp_{\focusA} \sharp_{\focusB} B \to \sharp_{\focusB} \sharp_{\focusA} B$ defined by
  \begin{align*}
    x \mapsto x_{\sharp_{\focusA}}{}_{\sharp_{\focusB}}{}^{\sharp_{\focusA}}{}^{\sharp_{\focusB}}
  \end{align*}
  is an equivalence. Furthermore, the maps $\sharp_{\focusA}\sharp_{\focusB} B
  \to \sharp_{\focusA\focusB} B$ and $\sharp_{\focusB}\sharp_{\focusA}B \to
  \sharp_{\focusA\focusB} B$ defined by
  \begin{align*}
   x &\mapsto x_{\sharp_{\focusA}\sharp_{\focusB}}{}^{\sharp_{\focusA\focusB}} \\
   x &\mapsto x_{\sharp_{\focusB}\sharp_{\focusA}}{}^{\sharp_{\focusA\focusB}}
  \end{align*}
  are also equivalences.
\end{prop}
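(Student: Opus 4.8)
The plan is to prove the two claims about the maps into $\sharp_{\focusA\focusB}B$ first, and then obtain the commutativity statement by composing equivalences. The structural facts I will rely on are the context identities $\focusA(\focusB\Gamma)\defeq(\focusA\focusB)\Gamma$, the commutativity $\focusA\focusB=\focusB\focusA$ of $\type{Focus}$, and the ordering facts $\focusA\focusB\leq\focusA$, $\focusA\focusB\leq\focusB$, together with the \rulen{$\sharp$-form}, \rulen{$\sharp$-intro}, \rulen{$\sharp$-elim}, \rulen{$\sharp$-beta}, \rulen{$\sharp$-eta} rules for each focus.

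First I would construct a map $e\colon\sharp_{\focusA}\sharp_{\focusB}B\to\sharp_{\focusA\focusB}B$: working in context $x:\sharp_{\focusA}\sharp_{\focusB}B$, apply \rulen{$\sharp$-intro} for $\focusA\focusB$, which allows us to treat $x$ as $\focusA\focusB$-crisp; since $\focusA\focusB\leq\focusA$ the term $x_{\sharp_{\focusA}}:\sharp_{\focusB}B$ is then well-formed, and since $\focusA\focusB\leq\focusB$ so is $x_{\sharp_{\focusA}}{}_{\sharp_{\focusB}}:B$, whence we set $e(x)\defeq x_{\sharp_{\focusA}}{}_{\sharp_{\focusB}}{}^{\sharp_{\focusA\focusB}}$. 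Conversely I would build $e'\colon\sharp_{\focusA\focusB}B\to\sharp_{\focusA}\sharp_{\focusB}B$ by nesting \rulen{$\sharp$-intro} for $\focusA$ and then for $\focusB$; by $\focusB(\focusA\Gamma)\defeq(\focusA\focusB)\Gamma$ the variable $y:\sharp_{\focusA\focusB}B$ becomes $\focusA\focusB$-crisp inside both, so $y_{\sharp_{\focusA\focusB}}:B$ is available there, and we set $e'(y)\defeq y_{\sharp_{\focusA\focusB}}{}^{\sharp_{\focusB}}{}^{\sharp_{\focusA}}$.

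Next I would check that $e$ and $e'$ are mutually inverse, in fact definitionally. For $e'\circ e$: underneath the relevant outer $\sharp$'s, \rulen{$\sharp$-beta} for $\focusA\focusB$ collapses $\bigl((x_{\sharp_{\focusA}}{}_{\sharp_{\focusB}})^{\sharp_{\focusA\focusB}}\bigr)_{\sharp_{\focusA\focusB}}$ to $x_{\sharp_{\focusA}}{}_{\sharp_{\focusB}}$, then \rulen{$\sharp$-eta} for $\focusB$ rewrites $(x_{\sharp_{\focusA}}{}_{\sharp_{\focusB}})^{\sharp_{\focusB}}$ to $x_{\sharp_{\focusA}}$, and \rulen{$\sharp$-eta} for $\focusA$ gives $e'(e(x))\defeq x$. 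For $e\circ e'$: \rulen{$\sharp$-beta} for $\focusA$ and then for $\focusB$ reduce $\bigl(e'(y)_{\sharp_{\focusA}}\bigr){}_{\sharp_{\focusB}}$ to $y_{\sharp_{\focusA\focusB}}$, and \rulen{$\sharp$-eta} for $\focusA\focusB$ yields $e(e'(y))\defeq y$. Hence both of the stated maps to $\sharp_{\focusA\focusB}B$ are equivalences; swapping $\focusA\leftrightarrow\focusB$ (legitimate since $\focusA\focusB=\focusB\focusA$) gives the same for $\sharp_{\focusB}\sharp_{\focusA}B\to\sharp_{\focusA\focusB}B$. Composing $e$ with the inverse of $\sharp_{\focusB}\sharp_{\focusA}B\to\sharp_{\focusA\focusB}B$ (namely $z\mapsto z_{\sharp_{\focusA\focusB}}{}^{\sharp_{\focusA}}{}^{\sharp_{\focusB}}$) produces an equivalence $\sharp_{\focusA}\sharp_{\focusB}B\to\sharp_{\focusB}\sharp_{\focusA}B$, which by one more use of \rulen{$\sharp$-beta} for $\focusA\focusB$ is definitionally the map $x\mapsto x_{\sharp_{\focusA}}{}_{\sharp_{\focusB}}{}^{\sharp_{\focusA}}{}^{\sharp_{\focusB}}$ of the statement.

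The only real subtlety — more tedium than genuine difficulty — is tracking which variables count as $\focusC$-crisp inside each nested \rulen{$\sharp$-intro}, and remembering that subterms such as $y_{\sharp_{\focusA\focusB}}$ or $e'(y)_{\sharp_{\focusA}}$ are well-typed only underneath an outer $(-)^{\sharp}$ (as in the discussion of \rulen{$\sharp$-eta}), so that all the \rulen{$\sharp$-beta}/\rulen{$\sharp$-eta} rewrites above must be performed under the appropriate outer modality and the substitutions involved in forming the composites must be carried out crisply. Because \rulen{$\sharp$-intro} distributes the focus operation over every context extension — exactly as in single-focus spatial type theory — this bookkeeping goes through without obstruction, and the definitional equalities above hold on the nose.
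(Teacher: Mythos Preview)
Your proposal is correct and uses the same core mechanism as the paper: construct explicit candidate inverses and verify they are definitional inverses via the \rulen{$\sharp$-beta} and \rulen{$\sharp$-eta} rules, relying on the context identity $\focusA(\focusB\Gamma)\defeq(\focusA\focusB)\Gamma$ to get the needed crispness. The only organizational difference is that the paper proves the commutativity $\sharp_{\focusA}\sharp_{\focusB}B\simeq\sharp_{\focusB}\sharp_{\focusA}B$ directly first (with inverse $x\mapsto x_{\sharp_{\focusB}}{}_{\sharp_{\focusA}}{}^{\sharp_{\focusB}}{}^{\sharp_{\focusA}}$) and then handles the maps to $\sharp_{\focusA\focusB}B$ separately, whereas you establish the $\sharp_{\focusA\focusB}B$ equivalences first and factor the commutativity through them; this is a harmless reordering rather than a different argument.
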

\begin{proof}
  The first map is well-defined because the use of $\sharp_{\focusA}$- and
  $\sharp_{\focusB}$-introduction means that the assumption $x$ becomes crisp for both
  $\focusA$ and $\focusB$, so we may apply $\sharp_{\focusA}$- and then
  $\sharp_{\focusB}$-elimination to it. We may similarly define an inverse by
  \[
x \mapsto x_{\sharp_{\focusB}\sharp_{\focusA}}{}^{\sharp_{\focusB}\sharp_{\focusA}}.
  \]
  These maps are definitional inverses by the computation rules for
  $\sharp$s.

  The other maps are similarly well defined since being crisp for both $\focusA$
  and $\focusB$ means being crisp for focus $\focusA\focusB$. The inverse may be
  defined in the straightforward way.
\end{proof}

We also note that the ordering of focuses is reflected in the containment of
their $\sharp$-modal (and so also $\flat$-modal) types.
\begin{cor}\label{lem:orderd.sharps}
  Suppose that $\focusB \leq \focusA$. Then any $\sharp_{\focusB}$-modal type is
  $\sharp_{\focusA}$-modal.
\end{cor}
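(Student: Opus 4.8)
The plan is to derive this from \cref{prop:sharps.commute} together with the behavior of the top focus $\top$. Recall that $\focusB \leq \focusA$ means precisely $\focusB\focusA = \focusB$, so by the combination rules for the modalities, $\sharp_{\focusB}$ is equivalent to $\sharp_{\focusB\focusA}$, which by \cref{prop:sharps.commute} is equivalent to $\sharp_{\focusB}\sharp_{\focusA}$ (and to $\sharp_{\focusA}\sharp_{\focusB}$). So for any type $X$ we have a natural equivalence
\[
  \sharp_{\focusB} X \;\equiv\; \sharp_{\focusA}\sharp_{\focusB} X.
\]

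Now suppose $X$ is $\sharp_{\focusB}$-modal, so the $\sharp_{\focusB}$-unit $\eta_{\focusB} : X \to \sharp_{\focusB} X$ is an equivalence. The first step is to observe that a type $Y$ is $\sharp_{\focusA}$-modal if and only if it is a retract of a type of the form $\sharp_{\focusA} Z$ — more simply, if $Y \equiv \sharp_{\focusA} Y$ via the unit — and that $\sharp_{\focusA}$-modal types are closed under equivalence. Combining this with the displayed equivalence: $X \equiv \sharp_{\focusB} X \equiv \sharp_{\focusA}(\sharp_{\focusB} X)$, so $X$ is equivalent to a $\sharp_{\focusA}$-modal type, hence is itself $\sharp_{\focusA}$-modal. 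The only thing to check carefully is that the composite equivalence $X \to \sharp_{\focusA}(\sharp_{\focusB}X)$ is in fact (homotopic to) the $\sharp_{\focusA}$-unit of $X$ postcomposed with $\sharp_{\focusA}$ applied to $\eta_{\focusB}$; since $\sharp_{\focusA}$ is a modality, any map into a $\sharp_{\focusA}$-modal type factors uniquely through the unit, so it suffices that $\sharp_{\focusA}(\sharp_{\focusB}X)$ is $\sharp_{\focusA}$-modal, which it is, and that the $\sharp_{\focusA}$-unit is then automatically an equivalence because $X$ already receives an equivalence from a $\sharp_{\focusA}$-modal type — a standard $2$-out-of-$3$ / retract argument for modalities.

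I expect the main obstacle to be purely bookkeeping: making sure the combination rule $\flat_{\focusA\focusB}\equiv\flat_{\focusA}\flat_{\focusB}$ (and its $\sharp$ analogue, asserted in the remark following the $\sharp$-rules) is used correctly together with $\sharp_\top\equiv\mathrm{id}$ and $\focusB\focusA=\focusB$, so that the chain $\sharp_{\focusB}\equiv\sharp_{\focusB\focusA}\equiv\sharp_{\focusA}\sharp_{\focusB}$ genuinely holds at the level of modalities and not just on crisp types. Given that, the rest is the observation that the class of $\sharp_{\focusA}$-modal types is replete (closed under equivalence) — a general fact about reflective subuniverses — so that $X$, being equivalent to $\sharp_{\focusA}(\sharp_{\focusB}X)$, is $\sharp_{\focusA}$-modal. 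The parenthetical statement about $\flat$ then follows dually, since $\flat_{\focusA}$ is crisply right adjoint... rather, $\sharp_{\focusA}$ is right adjoint to $\flat_{\focusA}$ and the $\flat$-modal types are exactly the $\sharp$-modal ones after restricting to discrete objects — more directly, one runs the same argument with $\flat$ in place of $\sharp$ using $\flat_{\focusB}\equiv\flat_{\focusA}\flat_{\focusB}$.
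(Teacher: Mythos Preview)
Your proposal is correct and takes essentially the same approach as the paper: both use $\focusB\focusA = \focusB$ to identify $\sharp_{\focusB}$ with $\sharp_{\focusA\focusB}$, invoke \cref{prop:sharps.commute} to rewrite this as $\sharp_{\focusA}\sharp_{\focusB}$, and then chain with the hypothesis $A \simeq \sharp_{\focusB} A$ to conclude $\sharp_{\focusA} A \simeq A$. The paper is simply terser, writing the chain directly as $\sharp_{\focusA} A \simeq \sharp_{\focusA}\sharp_{\focusB} A \simeq \sharp_{\focusA\focusB} A \equiv \sharp_{\focusB} A \simeq A$ and remarking that tracing through shows this inverts the unit; your extra discussion about repleteness and the trailing digression on $\flat$ are unnecessary here (the statement concerns only $\sharp$).
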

\begin{proof}
  Since $\focusB \leq \focusA$ is defined to mean
  $\focusA\focusB \defeq \focusB$, we know that
  $\sharp_{\focusA\focusB} A \defeq \sharp_{\focusB} A$.
  By assumption $\sharp_{\focusB} A \equiv A$, and chaining this with
  the commutativity equivalence of \cref{prop:sharps.commute}
  \[
\sharp_{\focusA} A \simeq \sharp_{\focusA} \sharp_{\focusB} A \simeq
\sharp_{\focusA\focusB} A \defeq \sharp_{\focusB} A \simeq A.
  \]
  Tracing these simple equivalences through, this does indeed give an inverse to
  the $\sharp_{\focusA}$-unit.
\end{proof}

We can similarly show that the $\flat$s commute. This is made simpler through
the use of crisp induction.

\begin{prop}\label{prop:flats-commute}
  Let $A$ be an $\focusA\focusB$-crisp type. Then
  $\flat_{\focusA} \flat_{\focusB} A \to \flat_{\focusB} \flat_{\focusA} A$ defined by
  \begin{align*}
    u \mapsto \ind{u}{v^{\flat_{\focusA}}}{(\ind{v}{w^{\flat_{\focusB}}}{(w^{\flat_{\focusA}})^{\flat_{\focusB}}})}
  \end{align*}
  is an equivalence, natural in $A$.
\end{prop}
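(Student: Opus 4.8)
The plan is to exhibit a definitional two-sided inverse built by the same recipe with the roles of $\focusA$ and $\focusB$ interchanged, and then to check that both round-trips reduce, after nested $\flat$-induction, to the identity. Explicitly, set $g : \flat_{\focusB}\flat_{\focusA} A \to \flat_{\focusA}\flat_{\focusB} A$ by
\[
  t \mapsto \ind{t}{v^{\flat_{\focusB}}}{(\ind{v}{w^{\flat_{\focusA}}}{(w^{\flat_{\focusB}})^{\flat_{\focusA}}})},
\]
the mirror image of the displayed map, which I will call $f$. The point to watch when checking that $f$ and $g$ are well-typed is that the \emph{inner} induction must be the crisp one from \rulen{$\flat$-elim}: when we induct on $v$, which is already $\focusA$-crisp, we need to retain that $\focusA$-crispness in the bound variable, so that it becomes $\focusA\focusB$-crisp and the term $(-)^{\flat_{\focusA}}$ under it is well-formed; the plain (non-crisp) elimination would leave the bound variable only $\focusB$-crisp and the expression would not typecheck. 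The outer induction is the ordinary one. All the relevant iterated $\flat$-types are well-formed in the subcontexts at which they appear because $A$ is $\focusA\focusB$-crisp, because $\focusA\focusB = \focusB\focusA$ lies below both $\focusA$ and $\focusB$, and by the structural equation $\focusA \setminus (\focusB \setminus \Gamma) \defeq (\focusB\focusA) \setminus \Gamma$.

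To see $g \circ f = \id$, take $u : \flat_{\focusA}\flat_{\focusB} A$ and do $\flat_{\focusA}$-induction into the identity type $g(f(u)) = u$, reducing to $u \defeq v^{\flat_{\focusA}}$ with $v :_{\focusA}\flat_{\focusB}A$, followed by the crisp $\flat_{\focusB}$-induction, reducing to $v \defeq w^{\flat_{\focusB}}$ with $w :_{\focusA\focusB}A$. On such an argument two applications of \rulen{$\flat$-beta} give $f\big((w^{\flat_{\focusB}})^{\flat_{\focusA}}\big) \defeq (w^{\flat_{\focusA}})^{\flat_{\focusB}}$, and two more give $g\big((w^{\flat_{\focusA}})^{\flat_{\focusB}}\big) \defeq (w^{\flat_{\focusB}})^{\flat_{\focusA}} = u$, so the goal is inhabited by $\refl$. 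The identity $f \circ g = \id$ is checked symmetrically. A map admitting a two-sided inverse up to homotopy is an equivalence, so $f$ is one.

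Naturality in $A$ follows by the same computation: for an $\focusA\focusB$-crisp map $h : A \to A'$, both composites $\flat_{\focusB}\flat_{\focusA}(h) \circ f_A$ and $f_{A'} \circ \flat_{\focusA}\flat_{\focusB}(h)$, after $\flat_{\focusA}$- and then crisp $\flat_{\focusB}$-induction on the argument $(w^{\flat_{\focusB}})^{\flat_{\focusA}}$, compute definitionally to $(h(w)^{\flat_{\focusA}})^{\flat_{\focusB}}$, so they agree by $\refl$. The only genuine obstacle is the crispness bookkeeping — in particular recognizing that the inner induction has to be the crisp one and that the focus annotations line up — after which every step is a mechanical appeal to \rulen{$\flat$-beta}.
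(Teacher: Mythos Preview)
Your proof is correct and follows essentially the same approach as the paper: define the inverse by the mirror-image formula, emphasize that the inner induction must be the crisp one so that the bound variable becomes $\focusA\focusB$-crisp, and verify the round-trips by a further pair of inductions that reduce everything via \rulen{$\flat$-beta}. Your write-up is more explicit about the crispness bookkeeping and the naturality check than the paper's terse version, but the underlying argument is identical.
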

\begin{proof}
  In words, the map is defined as follows.  Performing $\flat_{\focusA}$-induction on
  $u : \flat_{\focusA} \flat_{\focusB} A$ gives an assumption $v :_{\focusA} \flat_{\focusB}
  A$. A second induction on the term $v : \flat_{\focusB} A$ then gives us an
  assumption $w :_{\focusA\focusB} A$. This second induction is
  `$\focusA$-crisp $\flat_{\focusB}$-induction': the resulting assumption $w$
  inherits the $\focusA$-crispness of term $v$ and gains $\focusB$-crispness
  from the removal of $\flat_{\focusB}$. Finally, we form
  $(w^{\flat_{\focusA}})^{\flat_{\focusB}}$ by applying $\flat$-introduction twice. A
  map in the other direction is constructed in the same way, and then the proofs
  these are inverse are immediate by another pair of inductions each.
\end{proof}

We note also that the $\flat$-inductions commute.
\begin{lem}
  $\flat_{\focusA}$-induction and $\flat_{\focusB}$-induction commute:
  \begin{align*}
    \ind{(\ind{M}{v^{\flat_{\focusA}}}{N})}{u^{\flat_{\focusB}}}{C} = \ind{M}{v^{\flat_{\focusA}}}{(\ind{N}{u^{\flat_{\focusB}}}{C})}
  \end{align*}
  (when this is well-typed, i.e., $v$ does not occur in $C$.)
\end{lem}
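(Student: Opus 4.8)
The plan is to prove the equation by $\flat_{\focusA}$-induction on the term $M$ on which the outer (on the right) and inner (on the left) $\flat_{\focusA}$-eliminations are performed. Since $M$ already appears as the subject of a $\flat_{\focusA}$-elimination in the statement, it is crisp for whatever focus that elimination is carried out over, so inducting on it is legitimate; the motive is the identity type between the two displayed terms, which is well-formed because $C$ and $N$ type-check in the relevant extended contexts by the well-typedness hypothesis (in particular $v$ does not occur in $C$, and $u :_{\focusB} \flat_{\focusB}(\cdots)$ is available in $C$). Crucially, we invoke the \emph{general} form of the $\flat$-elimination rule from \cref{sec:rules} — the "$\focusB$-crisp induction principle" — so that the variable bound by the outer induction retains enough crispness that both the left-hand and the right-hand terms stay well-typed after the reduction.

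By $\flat_{\focusA}$-induction it then suffices to verify the equation when $M \defeq K^{\flat_{\focusA}}$ for suitably crisp $K$. On the left, \rulen{$\flat$-beta} rewrites $\ind{K^{\flat_{\focusA}}}{v^{\flat_{\focusA}}}{N}$ to $N[K/v]$, so the left-hand side becomes $\ind{N[K/v]}{u^{\flat_{\focusB}}}{C}$. On the right, \rulen{$\flat$-beta} rewrites $\ind{K^{\flat_{\focusA}}}{v^{\flat_{\focusA}}}{(\ind{N}{u^{\flat_{\focusB}}}{C})}$ to $\left(\ind{N}{u^{\flat_{\focusB}}}{C}\right)[K/v]$; since substitution commutes with the term-former for $\flat_{\focusB}$-elimination (admissibility of \rulen{subst}), this is $\ind{N[K/v]}{u^{\flat_{\focusB}}}{C[K/v]}$, and as $v$ does not occur in $C$ we have $C[K/v] \defeq C$. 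Hence the two sides are definitionally equal in the canonical case, so the required identity holds by $\refl$, and $\flat_{\focusA}$-induction delivers it for arbitrary $M$.

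The one place deserving care — and the only real obstacle — is the crispness bookkeeping: one must check that, after the outer $\flat_{\focusA}$-induction, each subterm (notably $\ind{N}{u^{\flat_{\focusB}}}{C}$ on the right and the doubly-iterated elimination on the left) is still well-formed in the appropriate context, which is exactly what the strengthened $\flat$-elimination rule buys us, and that the identity-type motive is a legitimate type family over $\flat_{\focusA} A$. Everything past that point is a mechanical application of the two $\beta$-rules together with the admissible substitution lemma.
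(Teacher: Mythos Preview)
Your proof is correct and essentially the same as the paper's: both arguments do $\flat_{\focusA}$-induction on $M$ (the paper phrases this as invoking the derivable \emph{uniqueness} principle for $\flat_{\focusA}$---it writes ``$\flat_{\focusB}$'', apparently a typo) and then apply \rulen{$\flat$-beta} so that the inner $\ind{w^{\flat_{\focusA}}}{v^{\flat_{\focusA}}}{N}$ computes to $N[w/v]$, after which only an $\alpha$-renaming remains. Your explicit remark about the crispness bookkeeping and the need for the strengthened $\flat$-elimination rule is exactly the content implicit in the paper's calculation.
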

\begin{proof}
  First using uniqueness of $\flat_{\focusB}$, we have
  \begin{align*}
    &\ind{(\ind{M}{v^{\flat_{\focusA}}}{N})}{u^{\flat_{\focusB}}}{C} \\
    &= \ind{M}{w^{\flat_{\focusA}}}{(\ind{(\ind{w^{\flat_{\focusA}}}{v^{\flat_{\focusA}}}{N})}{u^{\flat_{\focusB}}}{C})} \\
    &\defeq \ind{M}{w^{\flat_{\focusA}}}{(\ind{N[w/v]}{u^{\flat_{\focusB}}}{C})} \\
    &\defeq \ind{M}{v^{\flat_{\focusA}}}{(\ind{N}{u^{\flat_{\focusB}}}{C})}
  \end{align*}
\end{proof}

\subsection{Commuting Cohesions}

Now let's turn our attention to the relationships between two commuting
cohesions.
\begin{lem}
  Suppose that $\focusA$ and $\focusB$ are both cohesive. If a $\focusA\focusB$-crisp type $A$ is $\flat_{\focusA}$-modal, then
  $\shape_{\focusB} A$ is still $\flat_{\focusA}$-modal.
\end{lem}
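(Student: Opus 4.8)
The plan is to show that the $\flat_{\focusA}$-counit $\varepsilon : \flat_{\focusA}(\shape_{\focusB} A) \to \shape_{\focusB} A$ is an equivalence, by building an explicit inverse out of the universal property of the $\shape_{\focusB}$-localization. No orthogonality hypothesis on the two focuses will be needed: the argument uses only that each focus is cohesive and that the $\flat$'s commute (\cref{prop:flats-commute}).

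The key step is to observe that $\flat_{\focusA}(\shape_{\focusB} A)$ is itself $\shape_{\focusB}$-modal. Since $A$ is $\focusA\focusB$-crisp it is in particular $\focusB$-crisp, and $\shape_{\focusB} A$ stays crisp for both focuses because $\shape_{\focusB}$ is nullification at a $\focusB$-crisp family. As $\focusB$ is cohesive, \cite[Theorem 9.15]{mike:real-cohesive-hott} says a $\focusB$-crisp type is $\shape_{\focusB}$-modal iff it is $\flat_{\focusB}$-modal, so $\shape_{\focusB} A$ is $\flat_{\focusB}$-modal. Now for any $\focusA\focusB$-crisp, $\flat_{\focusB}$-modal type $X$ we have $\flat_{\focusA} X \simeq \flat_{\focusA}\flat_{\focusB} X \simeq \flat_{\focusB}\flat_{\focusA} X$ by \cref{prop:flats-commute}, and the last type is $\flat_{\focusB}$-modal by idempotence of $\flat_{\focusB}$; hence $\flat_{\focusA} X$ is $\flat_{\focusB}$-modal, and therefore again $\shape_{\focusB}$-modal. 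Taking $X := \shape_{\focusB} A$ gives the claim.

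With that in hand the rest is formal. Because $A$ is $\flat_{\focusA}$-modal, the $\shape_{\focusB}$-unit $\eta : A \to \shape_{\focusB} A$ factors through the counit: set $\tilde\eta := \flat_{\focusA}(\eta) \circ \varepsilon_A^{-1} : A \to \flat_{\focusA}(\shape_{\focusB} A)$, where $\varepsilon_A : \flat_{\focusA} A \to A$ is the invertible counit at $A$; naturality of the counit gives $\varepsilon \circ \tilde\eta = \eta$. Since $\flat_{\focusA}(\shape_{\focusB} A)$ is $\shape_{\focusB}$-modal, the universal property of $\eta$ yields a unique $r : \shape_{\focusB} A \to \flat_{\focusA}(\shape_{\focusB} A)$ with $r \circ \eta = \tilde\eta$. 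Then $\varepsilon \circ r$ and $\id_{\shape_{\focusB}A}$ both restrict along $\eta$ to $\eta$, so $\varepsilon \circ r = \id$ by uniqueness of maps out of $\shape_{\focusB} A$ into a $\shape_{\focusB}$-modal type. For the other composite, $\varepsilon \circ (r \circ \varepsilon) = \varepsilon = \varepsilon \circ \id$, and postcomposition with $\varepsilon$ is injective on crisp maps out of the $\flat_{\focusA}$-modal type $\flat_{\focusA}(\shape_{\focusB} A)$ — this is precisely the coreflection property of $\flat_{\focusA}$ — so $r \circ \varepsilon = \id$. Hence $\varepsilon$ is an equivalence and $\shape_{\focusB} A$ is $\flat_{\focusA}$-modal.

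The one delicate point — and the main thing to get right — is the crispness bookkeeping: one must check that $\shape_{\focusB} A$, and hence $\flat_{\focusA}(\shape_{\focusB} A)$, carries enough crispness to invoke \cref{prop:flats-commute}, the $\focusB$-cohesion theorem, and the crisp form of the $\flat_{\focusA}$-coreflection in the steps above. This should be routine once one notes that $\shape_{\focusB}$ is defined by nullifying a $\focusB$-crisp family and so preserves $\focusA$-crispness, but it is worth spelling out carefully.
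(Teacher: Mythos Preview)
Your proof is correct and follows essentially the same strategy as the paper: both show that $\flat_{\focusA}(\shape_{\focusB} A)$ is $\flat_{\focusB}$-modal (hence $\shape_{\focusB}$-modal) via \cref{prop:flats-commute}, then factor $\flat_{\focusA}(\eta_{\focusB})\circ\varepsilon_A^{-1}$ through the $\shape_{\focusB}$-unit to obtain the candidate inverse. The only difference is in verifying $r\circ\varepsilon = \id$: you invoke the $\flat_{\focusA}$-coreflection property (postcomposition with $\varepsilon$ is an equivalence on maps out of a $\flat_{\focusA}$-modal type), whereas the paper uses the naturality square of $\varepsilon_{\focusA}$ together with the observation that $\flat_{\focusA} i$ coincides with the comultiplication $\delta_{\focusA}$ --- a minor variation on the same formal argument.
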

\begin{proof}
  We need to produce an inverse to the counit
  $\varepsilon_{\focusA} : \flat_{\focusA} \shape_{\focusB} A \to \shape_{\focusB} A$.
  First construct the composite
  \begin{align*}
    A \xto{s} \flat_{\focusA} A \xto{\flat_{\focusA} \eta_{\focusB}} \flat_{\focusA} \shape_{\focusB} A
  \end{align*}
  where $A \xto{s} \flat_{\focusA} A$ is the assumed inverse to $\varepsilon_{\focusA} : \flat_{\focusA} A \to A$.

  The type $\flat_{\focusA} \shape_{\focusB} A$ is certainly $\flat_{\focusB}$-modal by
  commutativity of $\flat_{\focusA}$ and $\flat_{\focusB}$:
  \begin{align*}
    \flat_{\focusB} \flat_{\focusA} \shape_{\focusB} A \equiv \flat_{\focusA} \flat_{\focusB} \shape_{\focusB} A
    \equiv \flat_{\focusA} \shape_{\focusB} A
  \end{align*}
  and therefore the above map factors through
  $i : \shape_{\focusB} A \to \flat_{\focusA} \shape_{\focusB} A$, our purported inverse.

  For one direction, consider the naturality square for the counit $\varepsilon_{\focusA}$:
  \[
    \begin{tikzcd}
      \flat_{\focusA} \shape_{\focusB} A \ar[d, "\varepsilon_{\focusA}" swap] \ar[r, "\flat_{\focusA} i"] & \flat_{\focusA} \flat_{\focusA}
      \shape_{\focusB} A  \ar[d, "\varepsilon_{\focusA}"] \\
      \shape_{\focusB} A \ar[r, "i" swap] & \flat_{\focusA} \shape_{\focusB} A
    \end{tikzcd}
  \]
  The map $\flat_{\focusA} i$ is equal to the comultiplication
  $\delta_{\focusA} : \flat_{\focusA} A \to \flat_{\focusA} \flat_{\focusA} A$
  defined by $a^{\flat_{\focusA}} \mapsto a^{\flat_{\focusA}\flat_{\focusA}}$, because both are
  inverse to the map
  $\flat_{\focusA} \varepsilon_{\focusA} : \flat_{\focusA} \flat_{\focusA} A \to \flat_{\focusA}
  A$. And so the bottom composite in the square is equal to
  $\varepsilon_{\focusA} \circ \delta_{\focusA}$, which is the identity.

  In the other direction, because $\shape_{\focusB} A$ is $\shape_{\focusB}$-modal, it
  suffices to show that the composite
  \[A \to \shape_{\focusB} A \to \flat_{\focusA} \shape_{\focusB} A \to \shape_{\focusB} A\] is equal to
  the unit $\eta_{\focusB} : A \to \shape_{\focusB} A$. For this we have the
  following commutative diagram:
  \[
    \begin{tikzcd}
      A \ar[r, "\sim"] \ar[d, "{\eta_{\focusB}}" swap] & \flat_{\focusA} A \ar[d, "{\flat_{\focusA}\eta_{\focusB}}"] \ar[r, "{\varepsilon_{\focusA}}"] & A \ar[d, "{\eta_{\focusB}}"] \\
      \shape_{\focusB} A \ar[r, "i" swap] & \flat_{\focusA} \shape_{\focusB} A \ar[r, "{\varepsilon_{\focusA}}" swap] & \shape_{\focusB} A
    \end{tikzcd}
  \]
  The left square commutes by the definition of $i$, the right square by
  naturality of $\varepsilon_{\focusA}$, and the composite along the top is the identity.
\end{proof}

\begin{lem}\label{lem:shapes.commute}
  Suppose that $\focusA$ and $\focusB$ are both cohesive. Then
  $\shape_{\focusA} \shape_{\focusB} A \to \shape_{\focusB} \shape_{\focusA} A$
  is an equivalence for any $\focusA\focusB$-crisp type $A$.
\end{lem}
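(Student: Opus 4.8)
The strategy is to build the candidate equivalence $\shape_{\focusA}\shape_{\focusB} A \to \shape_{\focusB}\shape_{\focusA} A$ as the composite of units and then construct an inverse by the same symmetry. Concretely, I would use the (crisp) universal property of $\shape$: since $\shape_{\focusA}$ is left adjoint to $\flat_{\focusA}$ on crisp types, to map out of $\shape_{\focusA}\shape_{\focusB} A$ into a $\flat_{\focusA}$-modal type it suffices to map out of $\shape_{\focusB} A$; and then to map out of $\shape_{\focusB} A$ into a $\flat_{\focusB}$-modal type it suffices to map out of $A$. So the key is to observe that $\shape_{\focusB}\shape_{\focusA} A$ is both $\shape_{\focusA}$-modal (equivalently, by the theorem importing \cite[Theorem 9.15]{mike:real-cohesive-hott}, $\flat_{\focusA}$-modal) \emph{and} $\shape_{\focusB}$-modal. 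The first holds by the preceding lemma, which shows that $\shape_{\focusA}$ applied to a $\flat_{\focusB}$-modal type stays $\flat_{\focusB}$-modal — wait, I need the other direction: that $\shape_{\focusB}$ applied to the $\flat_{\focusA}$-modal type $\shape_{\focusA} A$ stays $\flat_{\focusA}$-modal, which is exactly the previous lemma with the roles of $\focusA$ and $\focusB$ swapped. Granting this, both iterated shapes live in the intersection of the $\shape_{\focusA}$- and $\shape_{\focusB}$-modal types, and the universal properties give maps in both directions.

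\textbf{Key steps.} First, invoke the previous lemma (with $\focusA$ and $\focusB$ interchanged) to conclude that $\shape_{\focusB}\shape_{\focusA} A$ is $\flat_{\focusA}$-modal, hence by \cite[Theorem 9.15]{mike:real-cohesive-hott} also $\shape_{\focusA}$-modal; it is of course $\shape_{\focusB}$-modal, being in the image of $\shape_{\focusB}$. Second, by the crisp adjunction $\shape_{\focusA}\dashv\flat_{\focusA}$, the $\flat_{\focusA}$-modal target receives a unique map from $\shape_{\focusA}\shape_{\focusB} A$ extending the composite $\shape_{\focusB} A \xto{\shape_{\focusB}\eta_{\focusA}} \shape_{\focusB}\shape_{\focusA} A$; but this latter composite itself factors through $\shape_{\focusA}\shape_{\focusB} A$ by feeding $A \xto{\eta_{\focusB}} \shape_{\focusB} A$ into the $\shape_{\focusB}$-universal property against the $\shape_{\focusB}$-modal target. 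This produces $\Phi : \shape_{\focusA}\shape_{\focusB} A \to \shape_{\focusB}\shape_{\focusA} A$. Third, by the identical argument with $\focusA$ and $\focusB$ swapped, produce $\Psi$ in the other direction. Fourth, check $\Psi \circ \Phi = \id$ and $\Phi \circ \Psi = \id$: by modality/uniqueness, it suffices to precompose with the canonical map $A \to \shape_{\focusA}\shape_{\focusB} A$ (the composite of the two units) and verify both round trips agree with that same canonical map — and this follows because each of $\Phi,\Psi$ was characterized as the unique map commuting with the relevant units, so both composites are forced to be the unique endomap of $\shape_{\focusA}\shape_{\focusB} A$ compatible with $A \to \shape_{\focusA}\shape_{\focusB} A$, namely the identity.

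\textbf{Main obstacle.} The delicate point is keeping the crispness bookkeeping straight: the adjunction $\shape_{\focusA}\dashv\flat_{\focusA}$ is only available $\focusA$-crisply, so one must check that all the maps being defined and compared are being built in a sufficiently crisp context — and that the type $A$, being $\focusA\focusB$-crisp as hypothesized, survives all the necessary $\setminus$ operations so that the induced maps on shapes are legitimate. Relatedly, one should make sure that the factorization "$\shape_{\focusB}\eta_{\focusA}$ composed with $\eta_{\focusB}$ factors through $\shape_{\focusA}\shape_{\focusB} A$" genuinely uses only the $\shape_{\focusB}$-modality of the target together with the fact that $\shape_{\focusA}\shape_{\focusB} A$ is $\shape_{\focusB}$-modal (since $\shape_{\focusB} A$ is $\shape_{\focusB}$-modal and $\shape_{\focusA}$ preserves this by the previous lemma again, now without swapping). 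Once the modality-closure facts are in hand, the equivalence is formal and the uniqueness clauses of the universal properties do all the work of checking the composites are identities.
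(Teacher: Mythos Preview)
Your proposal is correct and follows essentially the same approach as the paper: establish that $\shape_{\focusB}\shape_{\focusA} A$ is both $\flat_{\focusA}$-modal (via the preceding lemma) and $\flat_{\focusB}$-modal (as a $\shape_{\focusB}$-type), factor the composite of units $A \to \shape_{\focusB}\shape_{\focusA} A$ through $\shape_{\focusA}\shape_{\focusB} A$ using the two universal properties, construct the inverse symmetrically, and verify the round-trips by precomposing with the units and invoking uniqueness. The paper's argument is exactly this, stated more tersely; your discussion of the crispness bookkeeping as the main point of care is apt and matches the role of the $\focusA\focusB$-crispness hypothesis.
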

\begin{proof}
  The map
  $\eta_{\focusB} \circ \eta_{\focusA} : A \to \shape_{\focusA} A \to
  \shape_{\focusB} \shape_{\focusA} A$ factors through
  $\shape_{\focusA} \shape_{\focusB} A$, because
  $\shape_{\focusB} \shape_{\focusA} A$ is $\flat_{\focusB}$-modal, as a type of
  the form $\flat_{\focusB} X$, and also $\flat_{\focusA}$-modal, by the
  previous lemma. The map the other way is defined similarly. To show these are
  inverses it suffices to show that they become so after precomposition with the
  composites of the units, because $\shape_{\focusB} \shape_{\focusA} A$ and
  $\shape_{\focusA} \shape_{\focusB} A$ are $\focusA\focusB$-discrete; this is
  immediate by the definition of the maps.
\end{proof}

We might also hope that, say, $\flat_{\focusA}$ and $\shape_{\focusB}$ commute
in general, but there is a useful sanity check that shows this is not
possible. In the bare type theory with no axioms, there is nothing that prevents
interpretation in a model where $\flat_{\focusA} \defeq \flat_{\focusB}$ and
$\shape_{\focusA} \defeq \shape_{\focusB}$. In ordinary cohesive type theory it
is certainly not the case that $\flat$ and $\shape$ commute, and so
$\flat_{\focusA} \shape_{\focusB} \equiv \shape_{\focusB} \flat_{\focusA}$
cannot be provable without further assumptions on $\focusA$ and $\focusB$.

A sufficient assumption on our focuses to make $\flat_{\focusA}$ and $\shape_{\focusB}$ commute in
this way is the following:
\begin{defn}\label{defn:orthogonal.cohesions}
  Suppose that $\Afam :_{\focusA\focusB} I \to \Type$ and $\Bfam :_{\focusA\focusB} J \to
  \Type$ detect $\focusA$ and $\focusB$ connectivity respectively. We say that
  focuses $\focusA$ and $\focusB$ are \emph{orthogonal} if $\Afam_i$ is
  $\flat_{\focusB}$-modal for all $i$, and $\Bfam_{\! j}$ is $\flat_{\focusA}$-modal
  for all $j$.
\end{defn}

Our present goal is to show that this indeed makes
$\flat_{\focusA} \shape_{\focusB} \equiv \shape_{\focusB} \flat_{\focusA}$.
We will in fact only use that the $\Afam_i$ are
$\flat_{\focusB}$-modal; of course the dual results, flipping $\focusA$ and
$\focusB$, require the other half of orthogonality.

\begin{lem}\label{lem:gen-flat-preserves-discrete}
 Let $\focusA$ and $\focusB$ be cohesive focuses that are orthogonal. Then for any $\focusB$-crisp $A$, if $A$
is $\shape_{\focusA}$-modal, $\flat_{\focusB} A$ is still $\shape_{\focusA}$-modal.
\end{lem}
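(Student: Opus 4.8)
The plan is to prove that the $\shape_{\focusA}$-unit $\eta \colon \flat_{\focusB} A \to \shape_{\focusA}\flat_{\focusB} A$ is an equivalence. Since the $\shape_{\focusA}$-modal types form a reflective subuniverse, it is enough to produce a retraction $\rho \colon \shape_{\focusA}\flat_{\focusB} A \to \flat_{\focusB} A$ with $\rho \circ \eta = \id$; the other composite $\eta \circ \rho$ is then automatically the identity by the universal property of $\eta$. To build $\rho$, first note that because $A$ is $\shape_{\focusA}$-modal, the $\flat_{\focusB}$-counit $\varepsilon \colon \flat_{\focusB} A \to A$ factors uniquely through $\eta$, yielding $r \colon \shape_{\focusA}\flat_{\focusB} A \to A$ with $r \circ \eta = \varepsilon$. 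What remains is to lift $r$ back through $\varepsilon$, i.e.\ to factor it as $\varepsilon \circ \rho$.

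The crucial observation — and where orthogonality (\cref{defn:orthogonal.cohesions}) enters — is that $\shape_{\focusA}\flat_{\focusB} A$ is again $\flat_{\focusB}$-modal. Indeed, by orthogonality the family $\Afam \colon_{\focusA\focusB} I \to \Type$ detecting $\focusA$-connectivity consists of $\flat_{\focusB}$-modal types, and $\shape_{\focusA}$ is by definition nullification at this family. The reflection $\shape_{\focusA}\flat_{\focusB} A$ is therefore assembled from $\flat_{\focusB} A$ and copies of the $\Afam_i$ by the colimits of the small-object construction; since $\flat_{\focusB}$ preserves colimits (so the $\flat_{\focusB}$-modal types are closed under them, as well as under finite products since $\flat_{\focusB}$ is lex) and all the building blocks are $\flat_{\focusB}$-modal, so is $\shape_{\focusA}\flat_{\focusB} A$. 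Equivalently, one may carry out the nullification inside the coreflective subuniverse of $\flat_{\focusB}$-modal types and observe that it agrees with the ambient one. Granted this, $\rho$ is defined by $\flat_{\focusB}$-recursion along $\shape_{\focusA}\flat_{\focusB} A \equiv \flat_{\focusB}(\shape_{\focusA}\flat_{\focusB} A)$, sending a $\focusB$-crisp $z$ to $(r(z))^{\flat_{\focusB}}$ (which typechecks since $r(z)$ is again $\focusB$-crisp), and one checks $\varepsilon \circ \rho = r$ because both sides agree on crisp inputs.

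Finally I would verify that $\rho$ and $\eta$ are mutually inverse. For $\rho \circ \eta = \id$: we have $\varepsilon \circ (\rho \circ \eta) = r \circ \eta = \varepsilon$, and performing $\flat_{\focusB}$-induction on the argument reduces the claim to observing that two $\focusB$-crisp elements of $\flat_{\focusB} A$ with equal image under $\varepsilon$ are equal, which is immediate from $\flat_{\focusB}$-introduction together with $\flat_{\focusB}$-beta. For $\eta \circ \rho = \id$: precomposing with $\eta$ gives $(\eta \circ \rho) \circ \eta = \eta \circ (\rho \circ \eta) = \eta$, and since $\shape_{\focusA}\flat_{\focusB} A$ is $\shape_{\focusA}$-modal this forces $\eta \circ \rho = \id$ by the universal property of the unit. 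Hence $\eta$ is an equivalence and $\flat_{\focusB} A$ is $\shape_{\focusA}$-modal.

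I expect the main obstacle to be the middle paragraph: making precise that $\shape_{\focusA}$-nullification does not leave the $\flat_{\focusB}$-modal types, and that this is exactly what (the relevant half of) orthogonality buys us. A secondary nuisance is the crispness bookkeeping throughout — in particular confirming that $r$, obtained from $\varepsilon$ via the $\shape_{\focusA}$-recursor, and the $\flat_{\focusB}$-recursion defining $\rho$ both respect $\focusB$-crispness — but this is routine, proceeding exactly as in ordinary spatial type theory.
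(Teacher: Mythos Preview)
Your approach is genuinely different from the paper's, and the step you flag as the ``main obstacle'' is indeed a real gap.

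The paper never touches $\shape_{\focusA}\flat_{\focusB} A$ at all. It argues directly that $\flat_{\focusB} A$ is $\Afam_i$-null for every $i$, via a short chain of equivalences: first one checks that $(\Afam_i \to \flat_{\focusB} A)$ is itself $\focusB$-discrete (it is $\Bfam_j$-null for every $j$ because $\flat_{\focusB} A$ is), hence equal to its own $\flat_{\focusB}$; then the crisp adjunction $\shape_{\focusB}\dashv\flat_{\focusB}$, together with orthogonality (so $\shape_{\focusB}\Afam_i\simeq\Afam_i$) and the assumed $\shape_{\focusA}$-modality of $A$, collapses $\flat_{\focusB}(\Afam_i \to \flat_{\focusB} A)$ down to $\flat_{\focusB} A$. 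No retraction, no analysis of the nullification HIT, no colimit preservation.

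Your argument, by contrast, hinges on showing that $\shape_{\focusA}\flat_{\focusB} A$ is $\flat_{\focusB}$-modal, and your justification for this is not an internal proof: you appeal to $\flat_{\focusB}$ preserving colimits and to a small-object presentation of nullification. Internally $\flat_{\focusB}$ is only defined on $\focusB$-crisp types and is only known to preserve specific \emph{crisp} colimits (pushouts, sequential colimits) one at a time; turning that into a statement about the localization HIT staying $\flat_{\focusB}$-modal would itself be a substantial lemma. Nor can you shortcut this by quoting the preceding lemma with the roles of the focuses swapped: that lemma needs its input to be $\focusA\focusB$-crisp (its proof passes through $\flat_{\focusA}$ of the relevant types), whereas here $A$ is only $\focusB$-crisp, so $\flat_{\focusA}(\shape_{\focusA}\flat_{\focusB} A)$ is not even well-formed.

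If that gap were filled, the rest of your outline would likely go through (modulo some additional care in the $\rho\circ\eta=\id$ step, where you need the equivalence $\shape_{\focusA}\flat_{\focusB} A\simeq\flat_{\focusB}\shape_{\focusA}\flat_{\focusB} A$ to interact correctly with $\eta$). But filling it internally is harder than the entire lemma deserves; the paper's direct nullity check is both shorter and avoids all of this.
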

\begin{proof}
  Our goal is to show that $\flat_{\focusB} A$ is equivalent to $\Afam_i \to \flat_{\focusB} A$
  via precomposition by $\Afam_i \to 1$, for any $i : I$. We easily check that the
  type $\Afam_i \to \flat_{\focusB} A$ is $\focusB$-discrete: for any $\Bfam_{\! j}$,
  \begin{align*}
    \Bfam_{\! j} \to (\Afam_i \to \flat_{\focusB} A)
    &\equiv \Bfam_{\! j} \to (\Afam_i \to \flat_{\focusB} A) \\
    &\equiv \Afam_i \to (\Bfam_{\! j} \to \flat_{\focusB} A) \\
    &\equiv \Afam_i \to \flat_{\focusB} A
  \end{align*}
  because $\flat_{\focusB} A$ is $\focusB$-discrete. Then, by adjointness of
  $\shape_{\focusB}$ and $\flat_{\focusB}$:
  \begin{align*}
    (\Afam_i \to \flat_{\focusB} A)
    &\equiv \flat_{\focusB} (\Afam_i \to \flat_{\focusB} A) \\
    &\equiv \flat_{\focusB} (\shape_{\focusB} \Afam_i \to A) \\
    &\equiv \flat_{\focusB} (\Afam_i \to A) \\
    &\equiv \flat_{\focusB} A
  \end{align*}
\end{proof}

\begin{prop}[Crisp $\shape_{\focusA}$-induction]
  Suppose that $\focusA$ and $\focusB$ are cohesive and orthogonal. If $B$ is
  $\focusA$-discrete and $\focusB$-crisp, then for any $\focusB$-crisp $A$ the map
  \begin{align*}
    \flat_{\focusB}(\flat_{\focusB} \shape_{\focusA} A \to B) \to \flat_{\focusB} (\flat_{\focusB} A \to B)
  \end{align*}
  given by precomposition by
  $\flat_{\focusB} \eta_{\focusA} : \flat_{\focusB} A \to \flat_{\focusB} \shape_{\focusA} A$ is an
  equivalence.
\end{prop}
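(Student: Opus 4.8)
The plan is to strip off the outer $\flat_{\focusB}$ and reduce the whole statement to the single claim that $\flat_{\focusB}\eta_{\focusA}\colon \flat_{\focusB} A \to \flat_{\focusB}\shape_{\focusA} A$ is a $\shape_{\focusA}$-equivalence.

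Since $B$ is $\focusA$-discrete it is $\shape_{\focusA}$-modal, so by the universal property of the modality $\shape_{\focusA}$, for \emph{any} $\shape_{\focusA}$-equivalence $w\colon Z \to Z'$ the precomposition map $(Z' \to B) \to (Z \to B)$ is an equivalence. Applying this with $w := \flat_{\focusB}\eta_{\focusA}$, and then applying $\flat_{\focusB}$ to the resulting equivalence of function types, produces exactly the map of the statement. Hence it suffices to show that $\flat_{\focusB}\eta_{\focusA}$ is a $\shape_{\focusA}$-equivalence.

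Because $\eta_{\focusA}\colon A \to \shape_{\focusA} A$ is already a $\shape_{\focusA}$-equivalence, it is enough to know that $\flat_{\focusB}$ takes $\shape_{\focusA}$-equivalences to $\shape_{\focusA}$-equivalences. Now $\shape_{\focusA}$ is nullification at the family $\Afam\colon I \to \Type$ detecting $\focusA$-connectivity (which we may take to be small), so the $\shape_{\focusA}$-equivalences are the strongly saturated class generated by the maps $\Afam_i \to 1$. The functor $\flat_{\focusB}$ preserves equivalences and, being a left adjoint (to $\sharp_{\focusB}$), preserves colimits, hence pushouts and transfinite composites; so it preserves this saturated class as soon as it sends each generator $\Afam_i \to 1$ to a $\shape_{\focusA}$-equivalence. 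But orthogonality of $\focusA$ and $\focusB$ says precisely that every $\Afam_i$ is $\flat_{\focusB}$-modal, so the counit $\flat_{\focusB}\Afam_i \to \Afam_i$ is an equivalence and $\flat_{\focusB}(\Afam_i \to 1)$ identifies with $\Afam_i \to 1$; and $\Afam_i \to 1$ is a $\shape_{\focusA}$-equivalence because $\shape_{\focusA}\Afam_i \equiv 1$. This last fact is general for nullification: since $\Afam_i$ is one of the types being nullified, $\shape_{\focusA}\Afam_i$ is $\Afam_i$-null, so the constant map $\shape_{\focusA}\Afam_i \to (\Afam_i \to \shape_{\focusA}\Afam_i)$ is an equivalence; transporting the unit $\Afam_i \to \shape_{\focusA}\Afam_i$ across it shows the unit is constant, whence $\shape_{\focusA}$ of the unit is an equivalence (by idempotence) that factors through $\shape_{\focusA} 1 \equiv 1$, so $\shape_{\focusA}\Afam_i$ is contractible.

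The step most in need of care is this reduction to generators: making the claim ``the $\shape_{\focusA}$-equivalences are generated by the $\Afam_i \to 1$ under colimits, and $\flat_{\focusB}$ preserves those (crisp) colimits'' genuinely internal requires unwinding the small-object construction of nullification. An alternative that trades it for a different nontrivial ingredient is to first use the crisp $\flat_{\focusB}\dashv\sharp_{\focusB}$ adjunction $\flat_{\focusB}(\flat_{\focusB} C \to D) \equiv \flat_{\focusB}(C \to \sharp_{\focusB} D)$ (\cite[Corollary 6.26]{mike:real-cohesive-hott}) to rewrite both sides as $\flat_{\focusB}(\shape_{\focusA} A \to \sharp_{\focusB} B)$ and $\flat_{\focusB}(A \to \sharp_{\focusB} B)$, after which the claim follows from the universal property of $\shape_{\focusA}$ together with the dual of \cref{lem:gen-flat-preserves-discrete} --- that $\sharp_{\focusB}$ likewise preserves $\focusA$-discreteness --- whose proof, however, rests on $\sharp_{\focusB}$ commuting with exponentials by $\flat_{\focusB}$-modal types.
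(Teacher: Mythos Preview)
Your alternative at the end is exactly what the paper does: apply the crisp adjunction $\flat_{\focusB}(\flat_{\focusB} C \to D) \equiv \flat_{\focusB}(C \to \sharp_{\focusB} D)$ on both sides, reducing the claim to $\flat_{\focusB}(\shape_{\focusA} A \to \sharp_{\focusB} B) \equiv \flat_{\focusB}(A \to \sharp_{\focusB} B)$, and then use that $\sharp_{\focusB} B$ is $\shape_{\focusA}$-modal. Your hedging about that last step is unwarranted; the verification is short and uses only orthogonality together with the $\flat_{\focusB}\dashv\sharp_{\focusB}$ adjunction:
\[
(\Afam_i \to \sharp_{\focusB} B)\;\equiv\;\sharp_{\focusB}(\Afam_i \to \sharp_{\focusB} B)\;\equiv\;\sharp_{\focusB}(\flat_{\focusB}\Afam_i \to B)\;\equiv\;\sharp_{\focusB}(\Afam_i \to B)\;\equiv\;\sharp_{\focusB} B,
\]
the third equivalence using that $\Afam_i$ is $\flat_{\focusB}$-modal and the last that $B$ is $\Afam_i$-null. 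This is the argument of \cref{lem:shape.sharp.orthogonal} with the roles of the two focuses arranged appropriately, and it is all that is needed to invoke the universal property of $\shape_{\focusA}$ at $\sharp_{\focusB} B$.

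Your primary route, by contrast, carries exactly the gap you already flag, and in this type-theoretic setting it is a real one. The claim ``the $\shape_{\focusA}$-equivalences are the saturated class generated by $\Afam_i \to 1$, and $\flat_{\focusB}$ preserves that class because it preserves colimits'' is an external, model-categorical argument. Internally, $\shape_{\focusA}$ is built as a nullification HIT, and $\flat_{\focusB}$ is only defined on $\focusB$-crisp types; pushing $\flat_{\focusB}$ through the stages of the localization would require crisp induction principles for that HIT, which are not otherwise available (and are in fact the sort of thing one usually \emph{derives} from statements like the present one). So the primary line is not complete as written, and the clean route is the one you relegated to an afterthought --- which is precisely the paper's proof.
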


\begin{rmk}
As a rule, crisp $\shape$-induction would be written:
\begin{mathpar}
  \inferrule{\focusB \setminus \Gamma, x :_{\focusB} {\shape_{\focusA}} A \yields C \type \and
    \focusB \setminus \Gamma, x :_{\focusB} {\shape_{\focusA}} A \yields w : \mbox{is-$\flat_{\focusA}$-modal}(C) \\
    \focusB \setminus \Gamma \yields M : {\shape_{\focusA}} A \\
    \focusB \setminus \Gamma, u :_{\focusB} A \yields N : C[u^{\shape_{\focusA}}/x]}
  {\Gamma \yields (\ind{M}{u^{\shape_{\focusA}}}{N}):C[M/x]}
\end{mathpar}
\end{rmk}

\begin{proof}
  We deploy the usual trick for deriving crisp induction principles: using
  $\sharp_{\focusB}$ to move the $\flat_{\focusB}$ out of the way. Crucially, the previous
  proposition is what allows us to apply the universal property of $\shape_{\focusA}$
  on maps into $\sharp_{\focusB} B$.
  \begin{align*}
    \flat_{\focusB}(\flat_{\focusB} \shape_{\focusA} A \to B)
    &\equiv \flat_{\focusB} (\shape_{\focusA} A \to \sharp_{\focusB} B) \\
    &\equiv \flat_{\focusB} (A \to \sharp_{\focusB} B) \\
    &\equiv \flat_{\focusB} (\flat_{\focusB} A \to B)
  \end{align*}
\end{proof}

\begin{prop}\label{lem:shape.flat.commute}
  If $\focusA$ and $\focusB$ are orthogonal and cohesive focuses, then $\shape_{\focusA}$ and $\flat_{\focusB}$ commute on $\focusB$-crisp
  types.
\end{prop}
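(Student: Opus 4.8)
The plan is to mimic the proof of \cref{lem:shapes.commute}: exhibit maps in both directions and show they are mutually inverse, with the genuinely substantive content having been front-loaded into \cref{lem:gen-flat-preserves-discrete} and the preceding Crisp $\shape_{\focusA}$-induction principle (both of which invoke orthogonality, so the argument below uses it only through them). Throughout, $A$ is $\focusB$-crisp, so $\shape_{\focusA}A$ and $\flat_{\focusB}A$ are again $\focusB$-crisp, and $\flat_{\focusB}\shape_{\focusA}A$ is in addition $\shape_{\focusA}$-modal by \cref{lem:gen-flat-preserves-discrete} applied to the $\shape_{\focusA}$-modal type $\shape_{\focusA}A$.

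First I would construct $\phi : \shape_{\focusA}\flat_{\focusB}A \to \flat_{\focusB}\shape_{\focusA}A$. The map $\flat_{\focusB}\eta_{\focusA} : \flat_{\focusB}A \to \flat_{\focusB}\shape_{\focusA}A$ has $\shape_{\focusA}$-modal codomain, so it factors uniquely through the $\shape_{\focusA}$-unit $\eta_{\focusA} : \flat_{\focusB}A \to \shape_{\focusA}\flat_{\focusB}A$; call the induced map $\phi$, so that $\phi\circ\eta_{\focusA} = \flat_{\focusB}\eta_{\focusA}$. In the other direction I would define $\psi : \flat_{\focusB}\shape_{\focusA}A \to \shape_{\focusA}\flat_{\focusB}A$ by nested crisp induction: by crisp $\flat_{\focusB}$-induction it suffices to handle inputs $a^{\flat_{\focusB}}$ with $a :_{\focusB} \shape_{\focusA}A$, and then --- since the target $\shape_{\focusA}\flat_{\focusB}A$ is $\shape_{\focusA}$-modal, hence $\flat_{\focusA}$-modal by \cite[Theorem 9.15]{mike:real-cohesive-hott} --- crisp $\shape_{\focusA}$-induction lets us further assume $a = b^{\shape_{\focusA}}$ with $b :_{\focusB} A$, and we send this to $\eta_{\focusA}(b^{\flat_{\focusB}})$. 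Symbolically, $\psi := \lambda t.\, \ind{t}{a^{\flat_{\focusB}}}{\ind{a}{b^{\shape_{\focusA}}}{\eta_{\focusA}(b^{\flat_{\focusB}})}}$.

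It remains to see $\phi$ and $\psi$ are inverse. For $\psi\circ\phi = \id$ on $\shape_{\focusA}\flat_{\focusB}A$: since this type is $\shape_{\focusA}$-modal, it suffices to check equality after precomposing with $\eta_{\focusA}$, and then --- using $\phi\circ\eta_{\focusA} = \flat_{\focusB}\eta_{\focusA}$, crisp $\flat_{\focusB}$-induction, and the $\flat_{\focusB}$- and $\shape_{\focusA}$-computation rules --- both composites send $b^{\flat_{\focusB}}$ to $\eta_{\focusA}(b^{\flat_{\focusB}})$. For $\phi\circ\psi = \id$ on $\flat_{\focusB}\shape_{\focusA}A$: since this type is $\flat_{\focusB}$-modal, a self-map is pinned down by its composite with the counit $\varepsilon_{\focusB} : \flat_{\focusB}\shape_{\focusA}A \to \shape_{\focusA}A$, so it suffices to show $\varepsilon_{\focusB}\circ\phi\circ\psi = \varepsilon_{\focusB}$; after crisp $\flat_{\focusB}$-induction (input $a^{\flat_{\focusB}}$) and crisp $\shape_{\focusA}$-induction (input $b^{\shape_{\focusA}}$) both sides reduce to $b^{\shape_{\focusA}}$ by the same computation rules. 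Naturality in $A$ is automatic, since $\phi$ and $\psi$ are assembled from units, counits, and $\flat_{\focusB}\eta_{\focusA}$, all of which are natural. I expect the only delicate point to be the crispness bookkeeping --- in particular verifying that the nested inductions defining $\psi$ (crisp $\flat_{\focusB}$- then crisp $\shape_{\focusA}$-induction) are legitimate and that $\shape_{\focusA}\flat_{\focusB}A$ really is $\flat_{\focusA}$-modal in the ambient context --- rather than any new idea, since agreement of the two sides is forced by the rigidity of maps between $\shape_{\focusA}$-modal and between $\flat_{\focusB}$-modal types.
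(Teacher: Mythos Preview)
Your proposal is correct and follows essentially the same approach as the paper: you use \cref{lem:gen-flat-preserves-discrete} to know that $\flat_{\focusB}\shape_{\focusA}A$ is $\shape_{\focusA}$-modal when constructing $\phi : \shape_{\focusA}\flat_{\focusB}A \to \flat_{\focusB}\shape_{\focusA}A$, and crisp $\shape_{\focusA}$-induction when constructing $\psi$ in the other direction --- exactly the two ingredients the paper's brief proof names. Your treatment is simply more explicit about the inverse checks; the only small remark is that your justification ``since this type is $\flat_{\focusB}$-modal, a self-map is pinned down by its composite with the counit'' is correct but for a slightly different reason than stated --- it follows from naturality of $\varepsilon_{\focusB}$ together with the fact that $\varepsilon_{\focusB} : \flat_{\focusB}\flat_{\focusB}X \to \flat_{\focusB}X$ is an equivalence (idempotence), which applies to $\focusB$-crisp self-maps of any $\flat_{\focusB}X$.
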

\begin{proof}
  This is now straightforward induction on $\shape_{\focusA}$ and $\flat_{\focusB}$ in both
  directions, using crisp $\shape_{\focusA}$-induction when defining
  $\flat_{\focusB} \shape_{\focusA} A \to \shape_{\focusA} \flat_{\focusB} A$ and
  \cref{lem:gen-flat-preserves-discrete} when defining
  $\shape_{\focusA} \flat_{\focusB} A \to \flat_{\focusB} \shape_{\focusA} A$ to know that
  $\flat_{\focusB} \shape_{\focusA} A$ is $\focusA$-discrete.
\end{proof}

\begin{cor}\label{lem:flat.sharp.commute}
If $\focusA$ and $\focusB$ are
orthogonal and cohesive focuses,
then $\flat_{\focusA}$ and
$\sharp_{\focusB}$ commute on
$\focusA\focusB$-crisp types.
\end{cor}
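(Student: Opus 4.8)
The plan is to prove this by a crisp mapping-space computation that reduces it to \cref{lem:shape.flat.commute} together with the commutativity $\flat_{\focusA\focusB} \equiv \flat_{\focusA}\flat_{\focusB} \equiv \flat_{\focusB}\flat_{\focusA}$ and the two crisp adjunctions $\shape \dashv \flat$ and $\flat \dashv \sharp$ of each focus. Fix an $\focusA\focusB$-crisp type $B$; then $\flat_{\focusA}\sharp_{\focusB}B$ and $\sharp_{\focusB}\flat_{\focusA}B$ are again $\focusA\focusB$-crisp, since each only mentions the variables of $B$. By crisp Yoneda it therefore suffices to produce, naturally in an $\focusA\focusB$-crisp test type $Z$ and induced by a single comparison map, an equivalence $\flat_{\focusA\focusB}(Z \to \flat_{\focusA}\sharp_{\focusB}B) \equiv \flat_{\focusA\focusB}(Z \to \sharp_{\focusB}\flat_{\focusA}B)$, and then to specialize to $Z := \flat_{\focusA}\sharp_{\focusB}B$ and $Z := \sharp_{\focusB}\flat_{\focusA}B$ to read off mutually inverse maps, exactly as in the closing move of \cref{lem:shapes.commute}.

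The computation runs as follows. On the left, write $\flat_{\focusA\focusB} = \flat_{\focusB}\flat_{\focusA}$; the inner $\flat_{\focusA}(Z \to \flat_{\focusA}\sharp_{\focusB}B)$ becomes $\flat_{\focusA}(\shape_{\focusA}Z \to \sharp_{\focusB}B)$ by the crisp $\shape_{\focusA} \dashv \flat_{\focusA}$ adjunction, and then, regrouping as $\flat_{\focusA}\flat_{\focusB}$, the crisp $\flat_{\focusB} \dashv \sharp_{\focusB}$ adjunction turns $\flat_{\focusB}(\shape_{\focusA}Z \to \sharp_{\focusB}B)$ into $\flat_{\focusB}(\flat_{\focusB}\shape_{\focusA}Z \to B)$; altogether $\flat_{\focusA\focusB}(Z \to \flat_{\focusA}\sharp_{\focusB}B) \equiv \flat_{\focusA\focusB}(\flat_{\focusB}\shape_{\focusA}Z \to B)$. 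Symmetrically, on the right the crisp $\flat_{\focusB} \dashv \sharp_{\focusB}$ adjunction gives $\flat_{\focusB}(Z \to \sharp_{\focusB}\flat_{\focusA}B) \equiv \flat_{\focusB}(\flat_{\focusB}Z \to \flat_{\focusA}B)$, and then the crisp $\shape_{\focusA} \dashv \flat_{\focusA}$ adjunction gives $\flat_{\focusA}(\flat_{\focusB}Z \to \flat_{\focusA}B) \equiv \flat_{\focusA}(\shape_{\focusA}\flat_{\focusB}Z \to B)$, so that $\flat_{\focusA\focusB}(Z \to \sharp_{\focusB}\flat_{\focusA}B) \equiv \flat_{\focusA\focusB}(\shape_{\focusA}\flat_{\focusB}Z \to B)$. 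Finally \cref{lem:shape.flat.commute}, applied to the ($\focusB$-crisp) type $Z$, identifies $\flat_{\focusB}\shape_{\focusA}Z \equiv \shape_{\focusA}\flat_{\focusB}Z$, which matches the two sides. This is the point at which orthogonality and cohesion of the two focuses enter — everything else is formal.

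I expect the main obstacle to be bookkeeping rather than ideas. First, at each adjunction step one must confirm that the relevant types ($Z$, $\flat_{\focusB}Z$, $\shape_{\focusA}Z$, $\sharp_{\focusB}B$, $\flat_{\focusA}B$, $B$) carry the crispness the rule demands; this is clean here because $B$ and $Z$ are $\focusA\focusB$-crisp and $\shape_{\focusA}$ (nullification at the $\focusA\focusB$-crisp family $\Afam$) and the $\flat$s preserve $\focusA\focusB$-crispness, which is precisely why the corollary is stated for $\focusA\focusB$-crisp types. Second, and more delicate, one must check that the chain of equivalences above is natural in $Z$ and is genuinely induced by precomposition with one comparison map $\flat_{\focusA}\sharp_{\focusB}B \to \sharp_{\focusB}\flat_{\focusA}B$ (the obvious one, obtained by $\flat_{\focusA}$-induction followed by $\sharp_{\focusB}$- then $\flat_{\focusA}$-introduction, as in \cref{prop:sharps.commute} and \cref{prop:flats-commute}), so that the crisp-Yoneda extraction of an honest inverse is licensed. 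Granting this, the corollary completes the table of commutations of the modalities of two orthogonal cohesive focuses, alongside \cref{prop:sharps.commute}, \cref{prop:flats-commute}, \cref{lem:shapes.commute}, and \cref{lem:shape.flat.commute}.
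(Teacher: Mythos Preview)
Your proof is correct and is essentially the same argument as the paper's, just unpacked: the paper observes in one line that $\flat_{\focusA}\sharp_{\focusB}$ and $\sharp_{\focusB}\flat_{\focusA}$ are right adjoints (on $\focusA\focusB$-crisp types) to $\flat_{\focusB}\shape_{\focusA}$ and $\shape_{\focusA}\flat_{\focusB}$ respectively, and then invokes \cref{lem:shape.flat.commute} and uniqueness of adjoints. Your hom-set computation with a test object $Z$ is exactly the standard unfolding of that uniqueness-of-adjoints step, so the two proofs coincide up to level of detail.
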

\begin{proof}
On $\focusA\focusB$-crisp types,
$\flat_{\focusA}\sharp_{\focusB}$ is
right adjoint to
$\flat_{\focusB}\shape_{\focusA}$,
and
$\sharp_{\focusB}\flat_{\focusA}$ is
right adjoint to
$\shape_{\focusA}\flat_{\focusB}$.
Therefore, if
$\flat_{\focusB}\shape_{\focusA} X
\simeq
\shape_{\focusA}\flat_{\focusB}$ for
a $\focusA\focusB$-crisp type $X$,
then also
$\flat_{\focusA}\sharp_{\focusB}X
\simeq \sharp_{\focusB}\flat_{\focusA}$.
\end{proof}

Next we investigate a relationship between $\shape$ and $\sharp$. Again, this
depends on a relationship between the families which detect continuity and
connectivity of the two focuses.

\begin{prop}\label{lem:shape.sharp.orthogonal}
  Suppose that $\focusB$ is cohesive, and that the following hold:
  \begin{enumerate}
  \item $\Bfam :_{\focusA\focusB} I \to \Type$ detects
    $\focusB$-connectivity, and $I$ is $\flat_{\focusA}$-modal.
  \item $\Bfam_i$ is $\flat_{\focusA}$-modal for all $i :_{\focusA} I$. (In
    particular, if $\focusA$ and $\focusB$ are orthogonal)
  \end{enumerate}
  Then if $X$ is $\shape_{\focusB}$-modal then $\sharp_{\focusA}X$ is also
  $\shape_{\focusB}$-modal.
\end{prop}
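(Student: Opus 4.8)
The plan is to unfold the modality $\shape_{\focusB}$ via its definition as nullification and show that $\sharp_{\focusA}$ preserves the relevant null types. Since $\shape_{\focusB}$ is nullification at the family $\{\Bfam_i\}_{i:I}$, a type $Y$ is $\shape_{\focusB}$-modal exactly when it is $\Bfam_i$-null for every $i:I$, i.e. when the constant map $Y \to (\Bfam_i \to Y)$ is an equivalence for each $i$. So, assuming $X$ is $\Bfam_i$-null for all $i$, it suffices to prove $\sharp_{\focusA}X$ is $\Bfam_i$-null for all $i$. Because $I$ is $\flat_{\focusA}$-modal and ``$\sharp_{\focusA}X$ is $\Bfam_i$-null'' is a proposition, $\flat_{\focusA}$-induction on $I$ (crisp induction for the discrete type $I$) reduces this to the case where $i$ is $\focusA$-crisp; for such $i$ the type $\Bfam_i$ is $\focusA$-crisp (as $\Bfam$ is $\focusA\focusB$-crisp) and $\flat_{\focusA}$-modal (hypothesis 2).

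The technical heart is the following key lemma: for a $\focusA$-crisp, $\flat_{\focusA}$-modal type $B$, the canonical map $\sharp_{\focusA}(X^B) \to (\sharp_{\focusA}X)^B$ is an equivalence, naturally in $X$. This map is well-defined since $\sharp_{\focusA}$-modal types form an exponential ideal, so $(\eta_X \circ -) : X^B \to (\sharp_{\focusA}X)^B$ factors uniquely through $\sharp_{\focusA}(X^B)$. For crisp $X$ this follows from \cite[Corollary 6.26]{mike:real-cohesive-hott}, which gives $\flat_{\focusA}(X^{\flat_{\focusA}B}) \simeq \flat_{\focusA}((\sharp_{\focusA}X)^B)$: since $B \simeq \flat_{\focusA}B$, applying $\flat_{\focusA}$ to the comparison map and using $\flat_{\focusA}\sharp_{\focusA} \simeq \flat_{\focusA}$ identifies it with this equivalence, and a map between $\sharp_{\focusA}$-modal types that becomes an equivalence after $\flat_{\focusA}$ is already an equivalence, because $\varepsilon_{\focusA}$ is a $\sharp_{\focusA}$-equivalence by \cite[Theorem 6.22]{mike:real-cohesive-hott}. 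The conceptual reason, which also covers a general (non-crisp) $X$, is that an exponential $X^B$ by a $\flat_{\focusA}$-modal $B$ is a homotopy limit and $\sharp_{\focusA}$, a composite of right adjoints, preserves limits; equivalently, a dependent product over a $\flat_{\focusA}$-modal base of $\sharp_{\focusA}$-connected types is again $\sharp_{\focusA}$-connected, so $(\eta_X\circ -)$ has $\sharp_{\focusA}$-connected fibres and is a $\sharp_{\focusA}$-equivalence.

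Granting the key lemma, I finish as follows. For $\focusA$-crisp $i$, consider the composite
\[
\sharp_{\focusA}X = \sharp_{\focusA}(X^{1}) \xrightarrow{\ \sharp_{\focusA}(\mathrm{const})\ } \sharp_{\focusA}(\Bfam_i \to X) \xrightarrow{\ \sim\ } (\Bfam_i \to \sharp_{\focusA}X),
\]
the second map being the key-lemma equivalence and $\mathrm{const}: X \to (\Bfam_i \to X)$ the constant map. Naturality of the key-lemma equivalence in the exponent (along $\Bfam_i \to 1$) identifies this composite with the constant map $\sharp_{\focusA}X \to (\Bfam_i \to \sharp_{\focusA}X)$, and it is an equivalence since $\mathrm{const}: X \to (\Bfam_i \to X)$ is an equivalence ($X$ being $\Bfam_i$-null), hence so is $\sharp_{\focusA}(\mathrm{const})$. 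Thus $\sharp_{\focusA}X$ is $\Bfam_i$-null, which completes the argument. The main obstacle is the key lemma for non-crisp $X$: the crisp case is a routine consequence of Corollary 6.26, but extending it needs either a limit-preservation argument phrased internally or a direct proof that $\Pi$-types over $\flat_{\focusA}$-modal bases preserve $\sharp_{\focusA}$-connectedness.
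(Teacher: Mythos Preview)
Your overall strategy matches the paper's exactly: reduce $\shape_{\focusB}$-modality to $\Bfam_i$-nullity, use $\flat_{\focusA}$-induction on $I$ to assume $i$ is $\focusA$-crisp, and then show $(\Bfam_i \to \sharp_{\focusA} X) \simeq \sharp_{\focusA} X$. The difference is entirely in how you handle your ``key lemma'' $\sharp_{\focusA}(X^B) \simeq (\sharp_{\focusA} X)^B$, and that is where you leave a genuine gap.

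Your detour through $\flat_{\focusA}$ and \cite[Corollary~6.26]{mike:real-cohesive-hott} forces $X$ to be crisp, and your two suggested patches for non-crisp $X$ are not internal proofs: the ``$\sharp_{\focusA}$ is a composite of right adjoints, hence preserves limits'' argument is external, and the claim that $\Pi$-types over a $\flat_{\focusA}$-modal base preserve $\sharp_{\focusA}$-connectedness is not a standard fact about lex modalities and would itself require essentially the computation you are trying to avoid. The paper sidesteps all of this by never leaving $\sharp_{\focusA}$. It computes directly
\[
(\Bfam_i \to \sharp_{\focusA} X)
\;\equiv\; \sharp_{\focusA}(\Bfam_i \to \sharp_{\focusA} X)
\;\equiv\; \sharp_{\focusA}(\flat_{\focusA}\Bfam_i \to X)
\;\equiv\; \sharp_{\focusA}(\Bfam_i \to X)
\;\equiv\; \sharp_{\focusA} X,
\]
where the first step is the exponential ideal property, the third uses $\flat_{\focusA}$-modality of $\Bfam_i$, and the fourth uses that $X$ is $\Bfam_i$-null. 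The crucial second step is the equivalence $\sharp_{\focusA}(A \to \sharp_{\focusA} X) \simeq \sharp_{\focusA}(\flat_{\focusA} A \to X)$ for $\focusA$-crisp $A$, and this holds for \emph{arbitrary} $X$: the \rulen{$\sharp$-intro} rule promotes every variable in the ambient context to $\focusA$-crisp, so inside $\sharp_{\focusA}(\,\cdot\,)$ you may freely form $a^{\flat_{\focusA}}$ and apply $(-)_{\sharp_{\focusA}}$ to crisp terms, giving explicit mutually inverse maps. No appeal to $\flat_{\focusA}$ on the outside, and hence no crispness hypothesis on $X$, is needed.
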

\begin{proof}
  Since $\Bfam$ detects $\focusB$-connectivity,
  it suffices to show that $\sharp_{\focusA} \shape_{\focusB} X$ is
  $\Bfam_i$-null for every $i : I$. Since $I$ is $\flat_{\focusA}$-modal, we may
  assume that $i :_{\focusA} I$ is $\focusA$-crisp by
  $\flat_{\focusA}$-induction. Then we can compute:
  \begin{align*}
    (\Bfam_i \to \sharp_{\focusA} X)
    &\equiv \sharp_{\focusA}(\Bfam_i \to \sharp_{\focusA} X)  \\
    &\equiv \sharp_{\focusA}(\flat_{\focusA} \Bfam_i \to  X) \\
    &\equiv \sharp_{\focusA}(\Bfam_i \to X) &\mbox{since $\Bfam_i$ was assumed $\flat_{\focusA}$-modal} \\
    &\equiv \sharp_{\focusA} X &\mbox{since $X$ is $\shape_{\focusB}$-modal}
  \end{align*}
Tracing upwards through this series of equivalences shows that the composite is
indeed the inclusion of constant functions.
\end{proof}

On the opposite extreme of orthogonality, we can see that if the $\Afam_i$ which
detect the connectivity of $\focusA$ are $\shape_{\focusB}$-connected, then any
$\shape_{\focusB}$-modal type is $\shape_{\focusA}$-modal.
\begin{prop}
Suppose that $\focusA$ and $\focusB$ are cohesive focuses where
$\Afam : I \to \Type$
detects the connectivity of $\focusA$.
Then the following are equivalent:
\begin{enumerate}
\item Every $\Afam_i$ is $\shape_{\focusB}$-connected.
\item
  Any $\shape_{\focusB}$-modal type is $\shape_{\focusA}$-modal.
\end{enumerate}
\end{prop}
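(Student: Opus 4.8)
The plan is to unfold both conditions in terms of nullity at the family $\Afam$ and observe that they amount to the same statement up to a harmless reordering of quantifiers. Recall that $\shape_{\focusA}$ is \emph{defined} to be the nullification at $\Afam$, so a type $X$ is $\shape_{\focusA}$-modal precisely when it is $\Afam_i$-null for every $i : I$, i.e.\ the constant-function map $X \to (\Afam_i \to X)$ is an equivalence for all $i$. Thus (2) is the assertion that for every $i$, every $\shape_{\focusB}$-modal type is $\Afam_i$-null, and it remains to match this with (1).

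For $(1) \Rightarrow (2)$: given a $\shape_{\focusB}$-modal type $X$ and an index $i : I$, I would show $X$ is $\Afam_i$-null. Since $\Afam_i$ is $\shape_{\focusB}$-connected by hypothesis, $\shape_{\focusB}\Afam_i \simeq \ast$, so $(\shape_{\focusB}\Afam_i \to X) \simeq (\ast \to X) \simeq X$. On the other hand, because $X$ is $\shape_{\focusB}$-modal, the universal property of the modality $\shape_{\focusB}$ makes precomposition with the $\shape_{\focusB}$-unit into an equivalence $(\shape_{\focusB}\Afam_i \to X) \simeq (\Afam_i \to X)$. Composing these equivalences gives $X \simeq (\Afam_i \to X)$, and tracing through shows the composite is the constant-map inclusion; hence $X$ is $\Afam_i$-null for every $i$, i.e.\ $\shape_{\focusA}$-modal.

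For $(2) \Rightarrow (1)$: fix $i : I$ and set $Y :\defeq \shape_{\focusB}\Afam_i$, which is $\shape_{\focusB}$-modal, hence $\shape_{\focusA}$-modal by hypothesis, hence $\Afam_i$-null; so $Y \simeq (\Afam_i \to Y)$ via constant maps. The $\shape_{\focusB}$-unit $\eta : \Afam_i \to Y$ is therefore (homotopic to) a constant map, so it factors as $\Afam_i \to \ast \to Y$. Applying $\shape_{\focusB}$ and using that $\eta$ is a $\shape_{\focusB}$-equivalence into a $\shape_{\focusB}$-modal type (so that $\shape_{\focusB}\eta$ is an equivalence $\simeq Y$), we find that $\id_Y$ factors through $\shape_{\focusB}\ast \simeq \ast$; hence $Y$ is a retract of a contractible type, so $\shape_{\focusB}\Afam_i \simeq \ast$, i.e.\ $\Afam_i$ is $\shape_{\focusB}$-connected.

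The only mildly delicate step is the last part of $(2) \Rightarrow (1)$: extracting contractibility of $\shape_{\focusB}\Afam_i$ from the bare fact that its unit is nullhomotopic. This is an instance of the standard fact that, for a modality $\modal$, a type $C$ is $\modal$-connected if and only if every $\modal$-modal type is $C$-null; if desired, this can be cited from the modality literature rather than reproved inline. Everything else is a direct application of the definition of $\shape_{\focusA}$ as a nullification together with the universal property of $\shape_{\focusB}$.
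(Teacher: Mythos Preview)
Your proof is correct, and the forward direction $(1)\Rightarrow(2)$ is essentially identical to the paper's. For $(2)\Rightarrow(1)$ you take a genuinely different route. The paper argues that $\shape_{\focusB}\Afam_i$, being $\shape_{\focusA}$-modal, has its identity factoring through $\shape_{\focusA}\shape_{\focusB}\Afam_i$, and then invokes \cref{lem:shapes.commute} to rewrite this as $\shape_{\focusB}\shape_{\focusA}\Afam_i \simeq \shape_{\focusB}\ast \simeq \ast$. Your argument instead stays at the level of nullification: from $\Afam_i$-nullity of $\shape_{\focusB}\Afam_i$ you extract that the unit $\eta$ is constant, then apply the functor $\shape_{\focusB}$ to get an equivalence factoring through a point. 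Your route is more elementary in that it avoids the shapes-commute lemma (which carries a $\focusA\focusB$-crispness hypothesis not made explicit in the proposition statement), and it is really just an inline proof of the standard characterization of $\modal$-connected types by nullity that you also cite. The paper's route, on the other hand, makes the role of the ambient cohesive structure more visible by using the interaction of the two shapes directly.
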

\begin{proof}

Suppose that $\Afam_i$ is $\shape_{\focusB}$-connected for all $i$ and that $X$ is
$\shape_{\focusB}$-modal. We may compute:
\[
  (1 \to X) \equiv (\shape_{\focusB}\Afam_i \to X)
  \equiv (\Afam_i \to X)
\]
In the first equivalence, we use that $\Afam_i$ is $\shape_{\focusB}$-connected, and
in the second that $X$ is $\shape_{\focusB}$-modal. We conclude that $X$ is
$\shape_{\focusA}$-modal.

Conversely, suppose that any $\shape_{\focusB}$-modal type is
$\shape_{\focusA}$-modal. Then in particular $\shape_{\focusB} \Afam_i$ is
$\shape_{\focusA}$-modal, so that the identity map $\shape_{\focusB} \Afam_i \to
\shape_{\focusB} \Afam_i$ factors through $\shape_{\focusA} \shape_{\focusB} \Afam_i$.
But by \cref{lem:shapes.commute} we have
\[
\shape_{\focusA} \shape_{\focusB} \Afam_i \simeq \shape_{\focusB} \shape_{\focusA}
\Afam_i \simeq \shape_{\focusB} \ast \simeq \ast.
\]
Therefore, the identity of $\shape_{\focusB} \Afam_i$ factors through the point,
which means it is contractible.
\end{proof}

\section{Examples with Multiple Focuses}\label{sec:multiple.examples}

In this section, we will see examples with multiple focuses. In particular, we
will see simplicial real cohesion,
equivariant differential cohesion,
and supergeometric cohesion.

\subsection{Simplicial Real Cohesion}

We assume two basic focuses: the real (continuous or differential) focus
$\shape \dashv \flat \dashv \sharp$, and the simplicial focus
$\geomreal \dashv \sk \dashv \csk$. We will write $\Rb$ for whichever flavor of
real numbers is used in the real cohesive focus.

We will assume both the axioms of real cohesion and simplicial cohesion, as well
as the following axiom relating the two focuses.
\begin{axiom}[Simplicial Real Cohesion]
  We assume that the real focus and simplicial focus are orthogonal --- which is
  to say, $\Rb$ is $0$-skeletal and that $\Delta[1]$ is discrete. Furthermore,
  we assume that $\shape$ is computed pointwise: for any simplicially crisp
  type $X$, the action $(\eta_{\shape})_n : X_n \to (\shape X)_n$ of the
  $\shape$-unit of $X$ on $n$-simplices is itself a $\shape$-unit.
\end{axiom}

Our goal in this section will be to prove that if $M$ is a $0$-skeletal type ---
to be thought of as a ``manifold'', having only real-cohesive structure but no
simplicial structure --- and $U$ is a \emph{good cover} of $M$ --- one for which
the finite intersections are $\shape$-connected whenever they are inhabited ---
then the homotopy type $\shape M$ of $M$ may be constructed as the realization
of a discrete simplicial set --- namely, the \v{C}ech nerve of the open cover,
with each open replaced by the point.

\begin{defn}
  Let $M$ be a $0$-skeletal type. A \emph{cover} of $M$ consists of a
  discrete $0$-skeletal index set $I$, and a family $U : I \to (M \to \Prop)$ of
  subobjects of $M$ so that for every $m : M$ there is merely an $i : I$ with $m
  \in U_i$. We may assemble a cover into a single surjective map $c : \bigsqcup_{i
    : I} U_i \to M$, where
  \[
    \bigsqcup_{i : I} U_i :\defeq \dsum{i : I} \dsum{m : M} (m \in U_i).
  \]

  A cover $U : I \to (M \to \Prop)$ is a \emph{good cover} if for any $n
  : \Nb$ and any $k : [n] \to I$, the $\shape$-shape of the intersection
  \[
    \bigcap_{i : [n]} U_{k(i)} :\defeq \dsum{m : M} (\dprod{i : [n]} (m \in U_{k(i)})).
  \]
  is a proposition. That is, $\shape(U_{k(0)} \cap \cdots \cap U_{k(n)})$ is
  contractible whenever there is an element in the intersection.
\end{defn}

We begin with a few ground-setting lemmas.
\begin{lem}
  Let $U : I \to (M \to \Prop)$ be a simplicially crisp cover, and let $c : \bigsqcup_{i : I}
  U_i \to M$ be the associated covering map. Consider the projection
  $\pi : \check{\type{C}}(c) \to \csk I$ defined by $(m, z) \mapsto (\fst z_{\csk})^{
    \csk }$. Over a simplicially crisp $n$-simplex $k : \Delta[n] \to \csk I$, we have
  \[
\fib_{\pi_n}(k^{\sk}) \simeq \bigcap_{i : [n]} U_{k(i_{\sk})}.
  \]
  As a corollary, we have that
  \[
    \type{\check{C}}(c)_n \simeq \dsum{k : I^{[n]}} \bigcap_{i : [n]} U_{k(i)}.
  \]
\end{lem}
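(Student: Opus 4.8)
The key computational tool is \cref{prop:cech.nerve}, which identifies the $n$-simplices of a {\v C}ech complex $\check{\type{C}}(f)_n$ with the $(n+1)$-fold pullback of $f$ along itself whenever the source and target of $f$ are $0$-skeletal. So the first thing I would check is that this proposition applies: $c : \bigsqcup_{i : I} U_i \to M$ has $M$ $0$-skeletal by assumption, and $\bigsqcup_{i : I} U_i = \dsum{i : I}\dsum{m : M}(m \in U_i)$ is $0$-skeletal because $I$ is $0$-skeletal (given), $M$ is $0$-skeletal, and $(m \in U_i)$ is a proposition hence $0$-skeletal, and $0$-skeletal types are closed under $\Sigma$ (as $\sk$ is a lex comodality). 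Thus $\check{\type{C}}(c)_n \simeq \dsum{y : M}(\dsum{z : \bigsqcup_i U_i}(cz = y))^{n+1}$. One then unfolds $\dsum{z : \bigsqcup_i U_i}(cz = y)$: an element is a pair of $i : I$, a point $m' : M$ with $m' \in U_i$, and a proof $m' = y$; contracting $m'$ with its identification to $y$, this is $\dsum{i : I}(y \in U_i)$. So $\check{\type{C}}(c)_n \simeq \dsum{y : M}(\dsum{i : I}(y \in U_i))^{n+1}$.

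Now I would rearrange this $(n+1)$-fold power. An element of $(\dsum{i : I}(y \in U_i))^{n+1}$ is a function $[n] \to \dsum{i : I}(y \in U_i)$ — here using that $\{0,\dots,n\}$ is the $(n+1)$-element set indexing the power — which by the type-theoretic axiom of choice (distributing $\Pi$ over $\Sigma$, valid since these are honest $\Pi/\Sigma$ types, no modality involved) is equivalently a pair of $k : [n] \to I$ together with $\dprod{i : [n]}(y \in U_{k(i)})$. Swapping the order of the resulting nested $\Sigma$s gives $\check{\type{C}}(c)_n \simeq \dsum{k : I^{[n]}}\dsum{y : M}\dprod{i : [n]}(y \in U_{k(i)}) = \dsum{k : I^{[n]}}\bigcap_{i : [n]} U_{k(i)}$, which is the claimed corollary.

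For the main statement about fibers, I would compute $\fib_{\pi_n}(k^{\sk})$ directly from the description of $\pi$. By \cref{prop:cech.nerve} the map $\pi : \check{\type{C}}(c) \to \csk I$ on $n$-simplices is (under the above identifications) the map sending $(y, z_0, \dots, z_n)$ to the tuple of first components of the $z_j$'s, viewed in $(\csk I)_n \simeq I^{[n]}$ (using \cref{thm:skeleton.simplex} that $\sk \Delta[n] = [n]$ together with $\sk$–$\csk$ adjointness, exactly as in the proof of \cref{prop:cech.nerve}); concretely, under $\check{\type{C}}(c)_n \simeq \dsum{k : I^{[n]}}\bigcap_{i:[n]}U_{k(i)}$ the map $\pi_n$ is just $\fst$. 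Hence the fiber of $\pi_n$ over a crisp $k : \Delta[n] \to \csk I$ — whose underlying element of $I^{[n]}$ I also write $k$, using that $k(i_{\sk})$ makes sense after noting $(\csk I)_n \simeq I^{[n]}$ — is by the fiber-of-a-first-projection lemma precisely $\bigcap_{i : [n]} U_{k(i_{\sk})}$. The main obstacle is bookkeeping: keeping track of the chain of identifications $\Delta[n]\to\csk I \rightsquigarrow [n]\to I$ carefully enough to see that $\pi_n$ really is a first projection and to justify that each $\sk$ can be stripped (all the types in sight being $0$-skeletal, exactly as in the final steps of the proof of \cref{prop:cech.nerve}); once that identification is pinned down the two displayed equivalences are formal, and the corollary follows by summing the fiber description over $k$.
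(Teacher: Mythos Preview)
Your proposal is correct and follows essentially the same route as the paper: both apply \cref{prop:cech.nerve} to identify $\check{\type{C}}(c)_n$ with the iterated pullback, then use the type-theoretic axiom of choice and the identification $(\csk I)_n \simeq I^{[n]}$ to see that $\pi_n$ becomes a first projection; the only difference is that you establish the description of $\check{\type{C}}(c)_n$ first and read off the fiber, whereas the paper computes the fiber directly and leaves the corollary implicit. One small slip: the claim ``$(m \in U_i)$ is a proposition hence $0$-skeletal'' is not valid in general---instead argue that $\bigsqcup_{i:I} U_i$ is a simplicially crisp subtype of the $0$-skeletal type $I \times M$, hence $0$-skeletal by the simplicial analogue of \cite[Lemma 8.17]{mike:real-cohesive-hott}.
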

\begin{proof}
  We compute:
  \begin{align*}
    \fib_{\pi_n}(k^{\sk}) &\defeq \dsum{x : \check{\type{C}}(c)_n} (\pi_n x = k^{\sk}) \\
    &\equiv \dsum{(m, z) : \dsum{m : M}(\dsum{i : I}(m \in U_i))^{[n]}} (\pi_n e(m, z) = k^{\sk})\\
                          &\equiv \dsum{m : M}\dsum{K : [n] \to I} (p : \dprod{i : [n]} (m \in U_{K(i)}))\times (\pi_n e(m, i \mapsto (K(i), p)) = k^{\sk})\\
    \intertext{Here, $e(m,z)$ is image under the equivalence from \cref{prop:cech.nerve}. When all the modal dust settles, we will be left knowing that $\pi_ne(m, i \mapsto (K(i), p)) : \sk(\Delta[n] \to \csk I)$ is the unique correspondent to $K : [n] \to I$ under the $\sk \dashv \csk$ adjunction. Therefore, we may contract $K$ away with $k^{\sk}$ in the above type to get:}
    &\equiv \dsum{m : M}(\dprod{i : [n]} m \in U_{k(i_{\sk})}).\quad
  \end{align*}\qedhere
\end{proof}

For the next lemma, we will need to know that $\shape$ commutes with $\csk$ on suitably crisp types.
\begin{thm}\label{thm:shape.coskeleton.commute}
  For any simplicially crisp type $X$, we have that $\shape \csk X \simeq \csk
  \shape X$.
\end{thm}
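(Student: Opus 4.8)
The plan is to exhibit both $\csk\shape X$ and $\shape\csk X$ as the reflection of $X$ into the subuniverse $\mathcal{M}$ of types that are simultaneously $\shape$-modal and $\csk$-modal, and then invoke the essential uniqueness of such reflections. Here $\csk$ is the $\sharp$ of the simplicial focus and $\shape$ the shape of the real focus; by the Simplicial Real Cohesion axiom the two focuses are orthogonal and cohesive, and all the types in play ($\csk X$, $\shape X$, $\csk\shape X$, $\shape\csk X$) are again simplicially crisp, so the modal rules apply throughout.

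First I would check that $\csk\shape X$ lies in $\mathcal{M}$. It is $\csk$-modal by construction, and it is $\shape$-modal by \cref{lem:shape.sharp.orthogonal}, applied with the real focus in the role of $\focusB$ and the simplicial focus in the role of $\focusA$: the real focus is cohesive, the object $\Rb$ detecting its connectivity is $0$-skeletal (i.e.\ $\sk$-modal) by the orthogonality assumption, so hypothesis (2) holds, and a single detecting object is indexed by the unit type, which is $0$-skeletal; hence $\csk$ preserves $\shape$-modal types, and in particular $\csk\shape X$ is $\shape$-modal.

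Next, the crux: $\shape\csk X$ is $\shape$-modal for free, and I must show it is also $\csk$-modal ($0$-coskeletal). I would argue simplex-wise. By the characterization of $\sharp$-modal types via the detecting family $\Delta[-]$, a simplicially crisp $Z$ is $0$-coskeletal exactly when each vertex map $Z_n \to (Z_0)^{n+1}$ is an equivalence, using the computation $(\csk Z)_n \simeq (\sk Z)^{n+1}$ that follows from $\sk\Delta[n]\simeq[n]$ (\cref{thm:skeleton.simplex}) and left-exactness of $\sk$. For $Z=\shape\csk X$, the pointwise-$\shape$ axiom identifies $Z_n \simeq \shape((\csk X)_n)$ and $Z_0\simeq\shape(\sk X)$, naturally in the simplex category, and under these identifications the vertex map of $Z$ becomes $\shape$ of the vertex map of $\csk X$ — an equivalence, since $\csk X$ is already $0$-coskeletal — followed by the canonical comparison $\shape((\sk X)^{n+1}) \to (\shape\sk X)^{n+1}$. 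The latter is an equivalence because $\shape$ preserves finite products, so $\shape\csk X$ is $0$-coskeletal.

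To finish, for any $Y\in\mathcal{M}$ one has $(\csk\shape X \to Y)\simeq(\shape X\to Y)\simeq(X\to Y)$ and $(\shape\csk X\to Y)\simeq(\csk X\to Y)\simeq(X\to Y)$ by the universal properties of the $\shape$- and $\csk$-reflections; thus the canonical maps $X\to\csk\shape X$ and $X\to\shape\csk X$ both exhibit the reflection of $X$ into $\mathcal{M}$, so $\shape\csk X\simeq\csk\shape X$ (naturally). Equivalently one may factor $X\to\csk X\xrightarrow{\csk\eta_\shape}\csk\shape X$ through the $\shape$-unit of $\csk X$ (legitimate since $\csk\shape X$ is $\shape$-modal) to obtain a map $\shape\csk X\to\csk\shape X$ and verify it is a levelwise equivalence from the computation above. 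The main obstacle is exactly the $\csk$-modality of $\shape\csk X$, and within it the appeal to $\shape$ preserving finite products: this is not formal, since $\shape$ is not left exact, and must be imported from the ambient real cohesion (e.g.\ \cite{mike:real-cohesive-hott}); one must also be careful that the identification $(\shape X)_n\simeq\shape(X_n)$ is natural over the simplex category so that the levelwise equivalences cohere. The remaining ingredients — the hypothesis-checking for \cref{lem:shape.sharp.orthogonal}, the formula for $(\csk X)_n$, and uniqueness of reflections — are routine.
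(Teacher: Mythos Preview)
Your proposal is correct and close in spirit to the paper's argument, but organized differently. The paper constructs the comparison map $\shape\csk X \to \csk\shape X$ (using \cref{lem:shape.sharp.orthogonal} exactly as you do) and then verifies directly that it is an equivalence on $n$-simplices via the chain
\[
(\shape\csk X)_n \;\simeq\; \shape((\csk X)_n) \;\simeq\; \shape\,\sk([n]\to X) \;\simeq\; \sk\,\shape([n]\to X) \;\simeq\; \sk([n]\to\shape X) \;\simeq\; (\csk\shape X)_n,
\]
invoking the pointwise-$\shape$ axiom, the $\sk\dashv\csk$ adjunction, \cref{lem:shape.flat.commute} (commutation of $\shape$ with $\sk$), and preservation of finite products by $\shape$. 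Your reflection-into-$\mathcal{M}$ framing is a clean repackaging of the same ingredients; in fact your vertex-map argument for the $0$-coskeletality of $\shape\csk X$ \emph{avoids} the paper's appeal to \cref{lem:shape.flat.commute}, since you never need to push $\sk$ past $\shape$. The naturality of the pointwise identification is indeed automatic, as you suspect: the axiom is phrased in terms of $(\eta_\shape)_n$, which is post-composition by a fixed map and hence natural in $[n]$.

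One small correction: your worry that ``$\shape$ preserving finite products is not formal'' is misplaced. This holds for \emph{any} modality in HoTT, because the modal types form an exponential ideal and hence the reflector preserves finite products; it is only preservation of pullbacks (left exactness) that can fail for $\shape$.
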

\begin{proof}
  We know by \cref{lem:shape.sharp.orthogonal} that $\csk \shape X$ is $\shape$-modal. We therefore
 have a map $\shape \csk X \to \csk \shape X$ given as the unique factor of
 $\csk(-)^{\shape} : \csk X \to \csk \shape X$. We will show that this map is an
 equivalence. Since it is crisp, it suffices to show that it is an equivalence
 on $n$-simplices. To that end, we compute:
 \begin{align*}
   \sk(\Delta[n] \to \shape \csk X)
   &\equiv \shape \sk(\Delta[n] \to \csk X) \\
                                    &\equiv \shape \sk([n] \to X) \\
                                    &\equiv \sk(\shape([n] \to X)) \\
                                    &\equiv \sk ([n] \to \shape X) \\
   &\equiv \sk(\Delta[n] \to \csk \shape X)
 \end{align*}
 It remains to show that this is indeed the right equivalence. Since the first
 equivalence in the series above is given as the inverse of $(-)^{\shape}_n :
 (\csk X)_n \to (\shape \csk X)_n$, it suffices to check that given a crisp $z :
 \Delta[n] \to \csk X$, $(z^{\sk})^{\shape}$ corresponds under the above
 equivalences to $(\csk(-)^{\shape} \circ z)^{\sk}$. First, we send
 $(z^{\sk})^{\shape}$ to $((z \circ (-)_{\sk})^{\sk})^{\shape}$. Then, we send
 it to $((z \circ (-)_{\sk})^{\shape})^{\sk}$, and then to $(i \mapsto
 (z(i_{\sk})^{\shape}))^{\sk}$. Finally, we map this to $(i \mapsto
 ((z(i^{\sk}{}_{\sk})^{\shape}))^{\csk\sk}$, which does equal
 $(\csk(-)^{\shape} \circ z)(i) \defeq (z(i)^{\shape})^{\csk}$ at $i : \Delta[n]$.
\end{proof}

\begin{lem}
Let $U : I \to (M \to \Prop)$ be a simplicially crisp cover, and let $c : \bigsqcup_{i : I}
U_i \to M$ be the covering map itself. Consider the ``projection''
$\pi : \check{\type{C}}(c) \to \csk I$ defined by $(m, z) \mapsto (\fst z_{\csk})^{
  \csk }$. Then $U$ is a good cover if and only if the restriction $\pi :
\check{\type{C}}(c) \to \im \pi$ is a $\shape$-unit.
\end{lem}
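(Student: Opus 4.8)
The plan is to unwind what it means for $\pi : \check{\type{C}}(c) \to \csk I$ to factor through its image as a $\shape$-unit, and relate this fibrewise to the goodness condition. Since $\im \pi$ is a subtype of the $0$-coskeletal type $\csk I$, it is itself $0$-coskeletal, hence $\geomreal$-modal by Axiom $\Delta\sk$. A crisp map $\pi : \check{\type{C}}(c) \to \im \pi$ into a $\shape$-modal type is a $\shape$-unit if and only if it is $\shape$-connected, i.e.\ all its fibres are $\shape$-connected (contractible after applying $\shape$). The first step is therefore to reduce ``$\pi : \check{\type{C}}(c) \to \im \pi$ is a $\shape$-unit'' to ``every fibre of $\pi$ over a point of $\im\pi$ is $\shape$-connected''. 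Since $\im\pi$ is $0$-coskeletal and everything in sight is simplicially crisp, by \cref{thm:equivalence.on.generators.gives.equivalence} it suffices to check the relevant equivalence on $n$-simplices, so we are checking that $\pi_n : \check{\type{C}}(c)_n \to (\im\pi)_n$ is $\shape$-connected for every $n$; and $(\im \pi)_n \simeq \im \pi_n$ by the lemma computing $n$-simplices of images.

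Next I would invoke the previous lemma identifying the fibre of $\pi_n$ over a simplicially crisp $k : \Delta[n] \to \csk I$ as $\fib_{\pi_n}(k^{\sk}) \simeq \bigcap_{i : [n]} U_{k(i_{\sk})}$. The goodness hypothesis says precisely that each nonempty such finite intersection has contractible $\shape$, i.e.\ that these fibres (when inhabited) are $\shape$-connected; and the ones that aren't inhabited don't lie over $\im \pi_n$. So $U$ good gives directly that $\pi_n$ is $\shape$-connected onto its image, for all $n$, and hence $\pi : \check{\type{C}}(c) \to \im\pi$ is $\shape$-connected, hence a $\shape$-unit. For the converse, if $\pi : \check{\type{C}}(c) \to \im\pi$ is a $\shape$-unit, then again passing to $n$-simplices (using that $\shape$ is computed pointwise, from the Simplicial Real Cohesion axiom, so that $(\eta_\shape)_n$ of $\check{\type{C}}(c)$ is a $\shape$-unit) shows $\pi_n$ is $\shape$-connected onto $\im\pi_n$; reading off the fibres via the same lemma gives that every inhabited finite intersection $\bigcap_{i:[n]} U_{k(i)}$ has contractible shape, which is exactly goodness.

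The main obstacle I expect is the bookkeeping around simplicial crispness and the pointwise computation of $\shape$: to pass ``$\pi$ is a $\shape$-unit'' down to ``$\pi_n$ is a $\shape$-unit onto its image'' and back, one needs that $\shape$ commutes with the $n$-simplices functor $X \mapsto X_n = \sk(\Delta[n] \to X)$ on simplicially crisp types, together with the fact (from \cref{thm:shape.coskeleton.commute} and \cref{lem:shape.sharp.orthogonal}) that $\shape$ and $\csk$, and $\shape$ and the subtype inclusion into $\csk I$, interact well enough that $(\im\pi)_n \simeq \im\pi_n$ stays $\shape$-modal and the factorization is preserved. One must also be careful that $\shape$-connectedness of $\pi_n$ onto $\im \pi_n$ is equivalent to $\shape$-connectedness of all its fibres, which uses that a map is $\shape$-connected iff it is a $\shape$-unit after restricting to image and that $\shape$ is lex enough on crisp types for this fibrewise characterization — all of which is available from \emph{Real Cohesion} applied to the simplicial focus. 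Once these compatibilities are in place, the heart of the argument — matching fibres of $\pi_n$ with finite intersections of the cover, as already done in the preceding lemma — makes the equivalence with goodness immediate.
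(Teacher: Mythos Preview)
Your overall strategy matches the paper's: establish that $\im \pi$ is $\shape$-modal, then reduce to $n$-simplices via the pointwise computation of $\shape$ and the identification $(\im\pi)_n \simeq \im\pi_n$, and finally read off the equivalence with goodness from the preceding fibre lemma. The heart of the argument is correct.

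There is, however, a genuine gap in your opening step. You claim that $\im\pi$, as a subtype of the $0$-coskeletal type $\csk I$, is itself $0$-coskeletal and ``hence $\geomreal$-modal by Axiom $\Delta\sk$''. This is wrong on two counts. First, Axiom $\Delta\sk$ equates being $\sk$-modal ($0$-\emph{skeletal}) with being $\geomreal$-modal for simplicially crisp types; being $\csk$-modal ($0$-\emph{coskeletal}) is the dual notion and implies nothing of the sort. Second, and more importantly, $\geomreal$ belongs to the simplicial focus, while what you actually need --- and what you immediately use in the next sentence --- is that $\im\pi$ is $\shape$-modal in the \emph{differential} focus. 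These are different modalities. (As a side issue, subtypes of $\csk$-modal types are not automatically $\csk$-modal either, so even the intermediate claim is unjustified.)

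The paper's argument for this step runs as follows: $\csk I$ is discrete (i.e., $\flat$-modal, equivalently $\shape$-modal) because $\shape\csk I \simeq \csk\shape I \simeq \csk I$ by \cref{thm:shape.coskeleton.commute} together with the assumed discreteness of $I$; then $\im\pi$, as a subtype of a discrete type, is itself discrete by Lemma~8.17 of \emph{Real Cohesion}. You do cite \cref{thm:shape.coskeleton.commute} later in your ``main obstacle'' paragraph, so you have the right tool in hand --- it just needs to be deployed here, in place of the $0$-coskeletal/$\geomreal$ confusion. With that fix, the rest of your sketch goes through and is essentially the paper's proof.
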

\begin{proof}
  Since $\csk$ and $\shape$ commute by \cref{thm:shape.coskeleton.commute} and $I$ is discrete, $\csk I$ is also
  discrete. As the subtype of a discrete type, $\im \pi$ is discrete. Therefore,
  it suffices to show that $\pi : \check{\type{C}}(c) \to \im \pi$ induces an
  equivalence $\shape \check{\type{C}}(c) \xto{\sim} \im \pi$ if and only if the
  cover $U$ is good. Since $\pi$ is crisp, $\shape \check{\type{C}}(c) \to \im
  \pi$ is an equivalence if and only if it is an equivalence on all
  $n$-simplices. On $n$-simplices, this map (at the top of the following
  diagram) is equivalent to the map on the
  bottom of the following diagram:
  \[
    \begin{tikzcd}
      {(\shape\check{\type{C}}(c))_n} & {(\im \pi)_n} \\
      {\shape(\type{\check{C}}(c)_n)} & {\im \pi_n} \\
      {\shape\left(\dsum{k : I^{[n]}} \bigcap_{i : [n]} U_{k(i)}\right)} \\
      {\dsum{k : I^{[n]}} \shape\left(\bigcap_{i : [n]} U_{k(i)}\right)} & {\dsum{k : I^{[n]}} \exists \bigcap_{i : [n]}U_{k(i)}}
      \arrow[from=1-1, to=1-2]
      \arrow[from=2-1, to=2-2]
      \arrow[from=4-1, to=4-2]
      \arrow[from=1-1, to=2-1, equals]
      \arrow[from=2-1, to=3-1, equals]
      \arrow[from=3-1, to=4-1, equals]
      \arrow[from=1-2, to=2-2, equals]
      \arrow[from=2-2, to=4-2, equals]
    \end{tikzcd}
  \]
 The bottom map is an equivalence if and only if the cover is good, and so we
 conclude the same for the top map.
\end{proof}

Finally, we can piece these lemmas together for our result.

\begin{thm}\label{thm:good.cover.homotopy}
  Let $U : I \to (M \to \Prop)$ be a simplicially crisp good cover of a $0$-skeletal type $M$. Let
  $\pi : \type{\check{C}(U)} \to \csk I$ be the projection. Then
  \[
    \geomreal \im \pi \simeq \shape M.
  \]
  This exhibits the shape $\shape M$ as the realization of a discrete (simplicial) set.
\end{thm}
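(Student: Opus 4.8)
The plan is to chain together the lemmas just established. The key players are: the covering map $c : \bigsqcup_{i : I} U_i \to M$ and its {\v C}ech nerve $\check{\type{C}}(c)$, the projection $\pi : \check{\type{C}}(c) \to \csk I$, and the three "realization = image" type results. The strategy is: first compute $\geomreal \im \pi$ by using that $\pi : \check{\type{C}}(c) \to \im \pi$ is a $\shape$-unit (since $U$ is a good cover), then relate this to $\geomreal \check{\type{C}}(c)$ via the interaction of $\geomreal$ and $\shape$, then use that $\geomreal \check{\type{C}}(c) \simeq \im c$ (since $M$ is $0$-skeletal, so $\csk$-image realizes to the image), and finally observe $\im c = M$ because $c$ is surjective.

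First I would run the computation
\[
  \geomreal \im \pi \equiv \geomreal \shape \check{\type{C}}(c),
\]
using the previous lemma that $\pi : \check{\type{C}}(c) \to \im \pi$ is a $\shape$-unit when $U$ is good, i.e.\ $\im \pi \simeq \shape \check{\type{C}}(c)$. Next I want to commute $\geomreal$ past $\shape$; since the simplicial and real focuses are orthogonal and cohesive, \cref{lem:shape.flat.commute} gives $\geomreal \shape \simeq \shape \geomreal$ on suitably crisp types (applied here with $\focusA$ the real focus and $\focusB$ the simplicial focus, so $\shape_{\focusB} = \geomreal$ and $\flat_{\focusA}\dashv\sharp$, $\shape_{\focusA} = \shape$; the relevant commutation is between $\shape$ and $\geomreal$, which are the two $\shape$-type modalities, governed by \cref{lem:shapes.commute}). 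So $\geomreal \shape \check{\type{C}}(c) \simeq \shape \geomreal \check{\type{C}}(c)$. By the earlier theorem on realizations of {\v C}ech nerves, $\geomreal \check{\type{C}}(c) \equiv \geomreal \im c$, and since $M$ is $0$-skeletal and $c$ is surjective, $\im c \equiv M$, giving $\geomreal \check{\type{C}}(c) \equiv \geomreal \im c \equiv \im c \equiv M$. Hence $\shape \geomreal \check{\type{C}}(c) \simeq \shape M$, completing the chain.

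The main obstacle I expect is bookkeeping the two distinct "shape" modalities and making sure the commutation lemma applies: \cref{lem:shapes.commute} commutes $\shape_{\focusA}$ and $\shape_{\focusB}$ on $\focusA\focusB$-crisp types, so I need $\check{\type{C}}(c)$ (equivalently $M$ and $U$) to be crisp for both focuses, which holds since $U$ is assumed simplicially crisp and $M$, being built from real-cohesive data, is crisp as well (and everything in sight is $\flat$-crisp because the constructions are definitional). A second subtlety is that the earlier "$\geomreal \check{\type{C}}(f) \equiv \im f$ when $Y$ is $0$-skeletal" theorem applies directly to $c : \bigsqcup_i U_i \to M$ since $M$ is $0$-skeletal by hypothesis; I should double-check that $\bigsqcup_i U_i$ need not be $0$-skeletal for that theorem — indeed it does not, since the statement only requires the codomain to be $0$-skeletal, and then $\im c$ is a subtype of the $0$-skeletal $M$, hence $0$-skeletal, so $\geomreal \im c \equiv \im c$.

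Assembling, the proof is the display
\[
  \geomreal \im \pi
  \;\equiv\; \geomreal \shape \check{\type{C}}(c)
  \;\simeq\; \shape \geomreal \check{\type{C}}(c)
  \;\equiv\; \shape \geomreal \im c
  \;\equiv\; \shape \im c
  \;\equiv\; \shape M,
\]
where the first step is the good-cover lemma, the second is \cref{lem:shapes.commute} (commuting $\shape$ and $\geomreal$), the third is the {\v C}ech-realization theorem, the fourth uses that $\im c$ is $0$-skeletal hence $\geomreal$-modal, and the last uses surjectivity of $c$. Since $\im \pi$ is discrete (a subtype of $\csk I$, and $\csk I$ is discrete because $I$ is discrete and $\shape$ commutes with $\csk$ by \cref{thm:shape.coskeleton.commute}), this indeed exhibits $\shape M$ as the realization of a discrete simplicial set.
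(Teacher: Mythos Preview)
Your proof is correct and follows essentially the same approach as the paper: use the good-cover lemma to identify $\im \pi \simeq \shape \check{\type{C}}(c)$, commute $\geomreal$ past $\shape$ via \cref{lem:shapes.commute}, and then invoke the {\v C}ech-realization theorem together with surjectivity of $c$ to reach $\shape M$. Your justification that $\im \pi$ is a discrete simplicial set (as a subtype of $\csk I$, which is discrete by \cref{thm:shape.coskeleton.commute}) also matches the paper's reasoning.
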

\begin{proof}
  Since the cover is good, we have that $\im \pi \simeq \shape \type{\check{C}}(U)$,
  so that
  \[
    \geomreal \im \pi \simeq \geomreal \shape \type{\check{C}}(U) \simeq \shape
    \geomreal \type{\check{C}}(U) \simeq \shape M.
  \]
  Now, since $I$ is a set and $\csk$ is a lex modality, $\csk I$ is also a set
  and so $\im \pi$ is a set as well. Furthermore, since subtypes of discrete types are
  discrete by Lemma 8.17 of \cite{mike:real-cohesive-hott}, and $\csk I$ is
  discrete since by \cref{thm:shape.coskeleton.commute} $\shape$ and $\csk$
  commute on simplicially crisp types, $\im \pi$ is discrete.
\end{proof}

\subsection{Equivariant Differential Cohesion}\label{sec:equivariant.differential}

In \emph{Proper Orbifold Cohomology}, Sati and Schreiber work in
equivariant differential cohesion to describe the differential cohomology of
orbifolds. This cohesion involves both the equivariant focus $\orbishape \dashv
\orbiflat \dashv \orbisharp$ and the differential (real-cohesive) focus $\shape
\dashv \flat \dashv \sharp$. In this section, we will assume both the axioms of
equivariant cohesion and differential real cohesion. We will refer to the smooth
reals by $\Rb$.

Unlike the simplicial real cohesive case, we do not need to add additional axioms to
ensure that the equivariant and differential cohesion are orthogonal.
\begin{lem}\label{lem:equivariant.differential.orthogonal}
  Equivariant and differential cohesion are orthogonal. That is:
  \begin{enumerate}
  \item The smooth reals $\Rb$ are invariant ($\orbiflat$-modal).
  \item For any finite group $G$, $\orbisharp \B G$ is discrete ($\flat$-modal).
  \end{enumerate}
\end{lem}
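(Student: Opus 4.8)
We want to show the two halves of orthogonality (Definition~\ref{defn:orthogonal.cohesions}): the family $\Rb$ detecting differential connectivity is $\orbiflat$-modal, and the family $\orbisharp \B G$ detecting equivariant connectivity is $\flat$-modal. The strategy for each is the same: both $\Rb$ and $\orbisharp \B G$ are built out of objects that are manifestly modal for the \emph{other} focus, and the relevant comodality commutes with the colimits/constructions involved.

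For part (1), the smooth reals $\Rb$ are, from the equivariant point of view, a constant (invariant) presheaf on the global orbit category: the site of smooth manifolds carries no equivariant structure, so the sheaf $\Rb$ lands in the image of the constant-functor inclusion $\Delta$. Internally, this means $\Rb$ should be $\orbiflat$-modal. The cleanest internal argument is to observe that $\Rb$ (in the sense of smooth reals, or even Dedekind reals) is a set: this is where I would invoke the axioms of synthetic differential geometry (or the fact that the smooth line is $0$-truncated), and then apply \cref{lem:equivariant.sets.invariant}, which says that any equivariantly crisp \emph{set} is both $\orbiflat$-modal and $\orbisharp$-modal. Since $\Rb$ is equivariantly crisp (it contains no equivariant variables) and is a set, \cref{lem:equivariant.sets.invariant} immediately gives that $\Rb$ is $\orbiflat$-modal. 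The main thing to check is simply that $\Rb$ really is a $0$-type, which follows from the standard axioms; everything else is a citation.

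For part (2), we must show $\orbisharp \B G$ is $\flat$-modal (i.e.\ discrete) for a finite group $G$. The key input is that $\B G$ itself is $\flat$-modal: $G$ is a finite set, hence crisply $\flat$-modal (finite discrete types are discrete), and by Theorem~5.9 of \cite{jaz:good-fibrations} the delooping $\B G$ of a crisply $\flat$-modal group is again $\flat$-modal. Now $\orbisharp \B G$ is the equivariant-sharp (orbisingular) reflection of $\B G$. I would then use that $\flat$ and $\orbisharp$ interact well: since $\B G$ is already $\flat$-modal and $\orbisharp$-modal objects are built by a reflection that does not introduce differential structure, $\orbisharp\B G$ stays $\flat$-modal. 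Concretely, one can argue as in \cref{lem:equivariant.sets.invariant}: to show $\orbisharp \B G$ is $\flat$-modal it suffices (by \cref{thm:equivalence.on.generators.gives.equivalence}, using that $\B G$-type generators are irrelevant here, or more directly by the defining universal property) to show the $\flat$-counit $\flat \orbisharp \B G \to \orbisharp \B G$ is an equivalence; since $\orbisharp$ is a lex modality for a focus orthogonal in structure to the differential one and $\B G$ is $\flat$-modal, $\orbisharp$ preserves this, and one reduces to the known fact that $\flat \B G \simeq \B G$.

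The \textbf{main obstacle} is part (2): making precise why the equivariant-sharp reflection $\orbisharp$ preserves $\flat$-modality of $\B G$ without circularly assuming the orthogonality we are trying to prove. The honest route is probably to appeal to the external description — $\orbisharp \B G = \nabla(\B G)$ is the Yoneda embedding of the bare $\infty$-groupoid $\B G$, which is differentially discrete because $\nabla$ factors through discrete differential stacks — and then note that this external fact is reflected internally by the axioms for global equivariant cohesion together with the $\flat$-modality of $\B G$. Alternatively, one can run the argument of \cref{lem:equivariant.sets.invariant} verbatim with $\flat$ in place of $\orbiflat$: show $\flat(\orbisharp\B G \to X) \to \flat(\orbisharp\B G \to X)$-style comparisons collapse using that $\orbisharp\B G$ is $0$-connected and $\B G$ is $\flat$-modal. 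I expect the published proof takes this second, self-contained route, and I would follow it: it is a short chain of equivalences once one has $\flat \B G \simeq \B G$ in hand.
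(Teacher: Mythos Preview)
Your part (1) is exactly the paper's argument: $\Rb$ is a set, so \cref{lem:equivariant.sets.invariant} gives $\orbiflat$-modality immediately.

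For part (2) you have the right starting ingredient --- $\B G$ is $\flat$-modal (hence $\shape$-modal, for a crisp type) by Theorem~5.9 of \cite{jaz:good-fibrations} --- but you miss the lemma that closes the argument without circularity. The paper invokes \cref{lem:shape.sharp.orthogonal}: if the family detecting $\shape$-connectivity (here just $\Rb$) is $\orbiflat$-modal, then $\orbisharp$ preserves $\shape$-modality. The hypothesis of that proposition is precisely part (1), which was proved using only \cref{lem:equivariant.sets.invariant} and nothing about the differential focus, so there is no circularity. Unwound, the computation is $(\Rb \to \orbisharp \B G) \simeq \orbisharp(\orbiflat \Rb \to \B G) \simeq \orbisharp(\Rb \to \B G) \simeq \orbisharp \B G$, using the $\orbiflat \dashv \orbisharp$ adjunction, part (1), and the $\shape$-modality of $\B G$.

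Your two suggested routes for part (2) do not work as stated. The external description via $\nabla$ is semantic motivation, not an internal proof. And running \cref{lem:equivariant.sets.invariant} ``verbatim with $\flat$ in place of $\orbiflat$'' does not go through: the mechanism there is that the test objects $\orbisharp \B G$ are $0$-connected and the target $X$ is a \emph{set}, so all maps are constant. Here the roles are reversed: the target $\orbisharp \B G$ is not a set, and the test objects $\Rb^n$ are not $0$-connected, so truncation-versus-connectedness is not the operative principle. The adjunction argument packaged in \cref{lem:shape.sharp.orthogonal} is what you need.
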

\begin{proof}
  Since the smooth reals are a set, they are invariant by
  \cref{lem:equivariant.sets.invariant}. Similarly, since $\B G$ is discrete and
  hence $\shape$-modal (by
  Theorem 5.9 of \cite{jaz:good-fibrations}), $\orbisharp \B G$ is still
  $\shape$-modal by \cref{lem:shape.sharp.orthogonal}.
\end{proof}

The following lemma appears as Lemma 3.67 of
\cite{sati-schreiber:proper-orbifold-cohomology},
and is proven quickly with our
general lemmas concerning orthogonal cohesions.
\begin{lem}
  Suppose that $X$ is both differentially and equivariantly crisp. Then
  \[
\orbiflat \shape X \equiv \shape \orbiflat X \quad\quad \orbiflat \flat X \equiv
\flat \orbiflat \flat X \quad\quad \orbiflat \sharp X \equiv \sharp \orbiflat X.
  \]
\end{lem}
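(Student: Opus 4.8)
The plan is to obtain all three equivalences as direct instances of the general results on orthogonal cohesions from \cref{sec:multiple.focuses}, applied to the two cohesive focuses $\orbiflat \dashv \orbisharp$ and $\flat \dashv \sharp$. The one nontrivial input is that these two focuses are orthogonal, which is precisely \cref{lem:equivariant.differential.orthogonal}; that both are cohesive is part of the global equivariant axioms and of differential real cohesion. Since $X$ is assumed both differentially and equivariantly crisp, it is $\focusA\focusB$-crisp for either assignment of the two focuses to the roles $\focusA$ and $\focusB$, so the crispness hypotheses of the cited lemmas are automatically met.

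For $\orbiflat \shape X \equiv \shape \orbiflat X$, I would apply \cref{lem:shape.flat.commute} with $\focusA$ the differential focus and $\focusB$ the equivariant focus: that proposition says $\shape_{\focusA}$ and $\flat_{\focusB}$ commute on $\focusB$-crisp types, which here is exactly the desired equivalence (in fact only equivariant crispness of $X$ is needed). For $\orbiflat \sharp X \equiv \sharp \orbiflat X$, I would instead apply \cref{lem:flat.sharp.commute} with $\focusA$ the equivariant focus and $\focusB$ the differential focus, giving that $\flat_{\focusA} = \orbiflat$ and $\sharp_{\focusB} = \sharp$ commute on $\focusA\focusB$-crisp types. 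The middle equivalence does not require orthogonality at all: by \cref{prop:flats-commute} we already have $\orbiflat \flat X \equiv \flat \orbiflat X$ for $\focusA\focusB$-crisp $X$; and since $\flat X$ is crisp and $\flat$-modal, commuting the two $\flat$s once more gives $\flat \orbiflat \flat X \equiv \orbiflat \flat \flat X \equiv \orbiflat \flat X$, so that $\orbiflat \flat X \equiv \flat \orbiflat \flat X$ as stated.

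There is no genuine obstacle here: the mathematical content has already been carried out in \cref{sec:multiple.focuses} together with \cref{lem:equivariant.differential.orthogonal}. The only point requiring attention is bookkeeping — the $\shape$-case and the $\sharp$-case require swapping which focus plays the role of $\focusA$ versus $\focusB$ in the two cited lemmas, and in each case one must confirm that the hypothesis ``$X$ is both differentially and equivariantly crisp'' supplies exactly the crispness condition those lemmas demand.
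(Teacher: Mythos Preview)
Your proposal is correct and matches the paper's approach exactly: the paper too cites \cref{lem:shape.flat.commute}, \cref{prop:flats-commute}, and \cref{lem:flat.sharp.commute} for the three equivalences respectively, relying on \cref{lem:equivariant.differential.orthogonal} for orthogonality. Your treatment of the middle equivalence is in fact slightly more careful than the paper's, since the statement as written has an extra $\flat$ (reading $\flat\,\orbiflat\,\flat X$ rather than $\flat\,\orbiflat X$); you correctly bridge this gap via $\flat\,\orbiflat\,\flat X \equiv \orbiflat\,\flat\,\flat X \equiv \orbiflat\,\flat X$, whereas the paper simply cites \cref{prop:flats-commute}.
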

\begin{proof}
  The first equivalence follows by \cref{lem:shape.flat.commute}. The second
  equivalence follows by
  \cref{prop:flats-commute}. The
  third follows by \cref{lem:flat.sharp.commute}.
\end{proof}

\subsection{Supergeometric Cohesion}

In his habilitation,
\emph{Differential Cohomology in a
  Cohesive $\infty$-Topos} \cite{urs:diff-coh},
Schreiber describes an increasing
tower of adjoint modalities which
appear in the setting of
supergeometry. The setting for
supergeometric cohesion --- called
``solid cohesion'' in \emph{ibid.}
--- is sheaves on the opposite of a
category of super
$\Ca^{\infty}$-algebras. Schreiber
calls these sheaves \emph{super formal
smooth $\infty$-groupoids}.
Specifically, the site is (the
opposite of) the full
subcategory of the category of super
commutative real algebras spanned by
objects of the form
\[
\Ca^{\infty}(\Rb^n) \otimes W
\otimes \Lambda \Rb^q
\]
where $W$ is a \emph{Weil algebra}
--- a commutative nilpotent
extension of $\Rb$ which is finitely
generated as an $\Rb$-module. The
factor $\Ca^{\infty}(\Rb^n) \otimes
W$ is even graded, while the
Grassmannian $\Lambda\Rb^q$ is odd
graded. See Definition 6.6.13 of \cite{urs:diff-coh}.

The inclusion of algebras of the
form $\Ca^{\infty}(\Rb^n) \otimes W$
has a left and a right adjoint. The
left adjoint is given by projecting
out the even subalgebra, and the
right adjoint is given by
quotienting by the ideal generated
by the odd graded elements. This
gives rise to an adjoint quadruple
between the resulting toposes of
sheaves and thus an adjoint triple of
idempotent adjoint (co)monads on the
topos of super formal smooth
$\infty$-groupoids:
${\rightrightarrows} \dashv
{\rightsquigarrow} \dashv {\supersharp}$.

Of these, $\rightrightarrows$ and
$\supersharp$ are idempotent monads.
However, $\rightrightarrows$ does not
preserve products, and so does not
give an internal modality. The
action of $\supersharp$ is easy
to define:
\[
\supersharp{X}
(\Ca^{\infty}(\Rb^n) \otimes W
\otimes \Lambda \Rb^q
) := X(\Ca^{\infty}(\Rb^n) \otimes W).
\]
That is, $\supersharp{X}$ is defined
by evaluating at the even part of
the superalgebra in the site. We may
characterize it internally by localizing
at the \emph{odd line} $\Rb^{0\mid
  1}$, which is the sheaf
represented by the free superalgebra
on one odd generator $\Lambda \Rb$.
We turn to the internal story now.

The topos of super formal smooth
$\infty$-groupoids also supports the
differential real cohesive
modalities $\shape \vdash \flat
\vdash \sharp$. These destroy all
geometric structure --- super and
otherwise. For this reason, we
will work with the lattice
$\{\mathrm{diff}< \mathrm{super} < \top\}$
of focuses.

The modalities of the
$\mathrm{super}$ focus are
${\rightsquigarrow} \vdash
\supersharp$. We will refer to
$\overset{\rightsquigarrow}{X}$ as
the even part of $X$, while
$\supersharp$ is known as the
\emph{rheonomic} modality.
We assume the following axioms for
supergeometric or \emph{solid} cohesion.
\begin{axiom}[Solid Cohesion]
Solid cohesion uses the focus
lattice $\{\mathrm{diff} <
\mathrm{super} < \top\}$. We use the
definition of real superalgebras due
to Carchedi and Roytenberg \cite{carchedi-roytenberg:superalgebra}.
\begin{enumerate}
\item We assume a commutative ring
  $\Rb^{1 \mid 0}$ satisfying the
  axioms of synthetic differential
  geometry (as e.g. in Section 4.1
  of \cite{jaz:orbifolds}) known as
  the \emph{smooth reals} or the
  \emph{even line}.
\item We assume an $\Rb^{1 \mid
    0}$-module $\Rb^{0 \mid 1}$.
  There is furthermore a bilinear
  multiplication $\Rb^{0 \mid 1}
  \times \Rb^{0 \mid 1} \to \Rb^{1
    \mid 0}$ which satisfies $a^2 =
  0$ for all $a : \Rb^{0 \mid 1}$.
  Together these axioms imply that
  $\Rb^{1 \mid 1} := \Rb^{1 \mid 0}
  \times \Rb^{0 \mid 1}$ is a
  $\Rb^{1 \mid 0}$-supercommutative
  superalgebra.
\item We assume the following odd
  form of the Kock-Lawvere axiom:
  For any function $f : \Rb^{0\mid
    1} \to \Rb^{0 \mid 1}$ with
  $f(0) = 0$, there is a unique $r :
  \Rb^{1 \mid 0}$ with $f(x) = rx$
  for all $x$.
  \item We assume that $\Rb^{0 \mid
      1}$ is $\rightsquigarrow$-connected.
\item We assume that $\Rb^{1\mid 0}$
  detects differential connectivity.
\item We assume that a type is
  $\supersharp$-modal if and only if
  it is $\Rb^{0 \mid 1}$-null.
\end{enumerate}
\end{axiom}

\begin{rmk}
It might seem prudent to instead
ask that differential connectivity
is detected by the family consisting
of both $\Rb^{1 \mid 0}$ and $\Rb^{0
\mid 1}$, since we want $\shape$ to
nullify all representables $\Rb^{n
  \mid q}$, but it suffices to test
with $\Rb^{1 \mid 0}$ since $\Rb^{0
  \mid 1}$ admits an explicit
contraction by its $\Rb^{1 \mid
  0}$-module structure (appealing to
Lemma 6.10 of \cite{jaz:good-fibrations}).
\end{rmk}

By \cref{lem:orderd.sharps}, any
$\sharp$-modal type is
$\supersharp$-modal. But also every
$\flat$-modal type is $\supersharp$-modal.
\begin{lem}
  If $X$ is $\shape$-modal (and, in
  particular, if $X$ is
  $\flat$-modal), then $X$ is $\supersharp$-modal.
\end{lem}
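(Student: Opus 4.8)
The plan is to reduce the whole statement to the single fact, already recorded in the Remark just above, that $\Rb^{0\mid 1}$ is $\shape$-connected, i.e. $\shape\,\Rb^{0\mid 1} \simeq \ast$. Note that the $\mathrm{super}$ focus is not cohesive (it has no $\shape_{\mathrm{super}}$), so I would not try to route through the propositions about orthogonal \emph{cohesive} focuses from \cref{sec:multiple.focuses}; instead everything goes directly through the universal property of $\shape$.

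First I would recall why $\shape\,\Rb^{0\mid 1} \simeq \ast$. The $\Rb^{1\mid 0}$-module structure on $\Rb^{0\mid 1}$ supplies a scalar multiplication $\Rb^{1\mid 0}\times\Rb^{0\mid 1}\to\Rb^{0\mid 1}$, $(t,x)\mapsto tx$, with $0\cdot x = 0$ and $1\cdot x = x$ for all $x$. Since the Solid Cohesion axiom asserts that $\Rb^{1\mid 0}$ detects differential connectivity, the endpoints $0$ and $1$ become identified in $\shape\,\Rb^{1\mid 0}\simeq\ast$, and transporting this identification through the multiplication contracts $\shape\,\Rb^{0\mid 1}$. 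This is exactly the application of Lemma 6.10 of \cite{jaz:good-fibrations} pointed to in the Remark.

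Then I would give the argument proper. By the sixth clause of the Solid Cohesion axiom, to prove $X$ is $\supersharp$-modal it suffices to prove $X$ is $\Rb^{0\mid 1}$-null, that is, that the precomposition map $X \simeq (\ast \to X) \to (\Rb^{0\mid 1}\to X)$ along the canonical $\Rb^{0\mid 1}\to\ast$ is an equivalence. But since $\shape\,\Rb^{0\mid 1}\simeq\ast$, the map $\Rb^{0\mid 1}\to\ast$ is a $\shape$-equivalence, and every $\shape$-modal type is local with respect to all $\shape$-equivalences (this is the defining property of the localization $\shape$, equivalently of nullification at $\Rb^{1\mid 0}$). As $X$ is $\shape$-modal by hypothesis, the displayed map is an equivalence, so $X$ is $\Rb^{0\mid 1}$-null and hence $\supersharp$-modal.

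For the parenthetical case, if $X$ is $\flat$-modal then $X$ is in particular $\shape$-modal, since the $\flat$-modal and $\shape$-modal types coincide in a cohesive setting --- both being the discrete objects (cf. \cite[Theorem 9.15]{mike:real-cohesive-hott}) --- so this reduces to the case already handled. I expect no genuine obstacle here: the only piece of real content is the identification $\shape\,\Rb^{0\mid 1}\simeq\ast$, and once it is in place the rest is a formal consequence of the universal property of $\shape$. The main thing to be careful about is bookkeeping --- keeping straight that $\shape$ and $\supersharp$ belong to different focuses ($\mathrm{diff}$ and $\mathrm{super}$ respectively, with $\mathrm{diff} < \mathrm{super}$) --- and citing the correct clause of the axiom for each step.
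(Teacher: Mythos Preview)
Your proposal is correct and follows essentially the same route as the paper: observe that $\Rb^{0\mid 1}$ is $\shape$-connected via its $\Rb^{1\mid 0}$-module contraction, conclude that any $\shape$-modal type is $\Rb^{0\mid 1}$-null, and then invoke the sixth clause of the Solid Cohesion axiom to get $\supersharp$-modality. The paper's proof is a one-sentence compression of exactly this argument; your version simply unpacks the steps more explicitly and adds the (also correct) reduction of the $\flat$-modal case to the $\shape$-modal one.
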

\begin{proof}
Since $\Rb^{0\mid 1}$ is
$\shape$-connected due to its
explicit contraction by the scaling
of its module structure, any
$\shape$-modal type is $\Rb^{0 \mid
  1}$-null, and therefore $\supersharp$-modal.
\end{proof}

\printbibliography

\appendix
\section{Proof Sketches for Admissible Rules}
\label{sec:proofs-admiss}

We sketch proofs that the operations on syntax are admissible, demonstrating the
interesting cases; those that involve division (\rulen{ctx-ext},
\rulen{$\flat$-form/intro/elim}, \rulen{$\sharp$-elim}) or promotion
(\rulen{$\sharp$-form/intro}).

\begin{defn}
  The $\focusA \Gamma$ and $\focusA \setminus \Gamma$ context operations extend
  in the obvious way to telescopes $\Gamma'$, so that
  \begin{align*}
    \focusA (\Gamma, \Gamma') &\defeq (\focusA \Gamma), (\focusA \Gamma') \\
    \focusA \setminus (\Gamma, \Gamma') &\defeq (\focusA \setminus \Gamma), (\focusA \setminus \Gamma')
  \end{align*}
\end{defn}

\begin{lem}[Weakening]
  Single-variable weakening is admissible.
  \begin{mathpar}
    \inferrule*[left=wk, fraction={-{\,-\,}-}]
    {\Gamma, \Gamma' \yields \judge}
    {\Gamma, w :_{\focusB} W, \Gamma' \yields \judge}
  \end{mathpar}
  Moreover, weakening does not change the size of the derivation tree.
\end{lem}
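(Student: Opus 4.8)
The plan is to argue by induction on the derivation of $\Gamma, \Gamma' \yields \judge$, inserting the fresh entry $w :_{\focusB} W$ (with the standing side hypothesis $\focusB \setminus \Gamma \yields W \jtype$ that makes the resulting context well-formed) and re-applying each rule. Everything reduces to two bookkeeping facts about the context operations, each immediate from their defining clauses. First, promotion distributes over the insertion and merely re-annotates the fresh variable: $\focusC(\Gamma, w :_{\focusB} W, \Gamma') \defeq (\focusC\Gamma),\, w :_{\focusC\focusB} W,\, (\focusC\Gamma')$ for every focus $\focusC$. Second, clearing either keeps the fresh variable, with its annotation unchanged, or deletes it outright:
\[
\focusC \setminus (\Gamma, w :_{\focusB} W, \Gamma') \defeq
\begin{cases}
(\focusC \setminus \Gamma),\ w :_{\focusB} W,\ (\focusC \setminus \Gamma') & \text{if } \focusB \leq \focusC, \\
(\focusC \setminus \Gamma),\ (\focusC \setminus \Gamma') & \text{otherwise.}
\end{cases}
\]
I would record these first and then run the induction, using that a derivation never mentions a variable outside its own context.

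The easy cases are \rulen{ctx-empty} (nothing to do), \rulen{var} (the looked-up variable still occurs after the insertion), and the ordinary type formers, whose premises sit in the full context: the induction hypothesis applied to each premise produces the weakened premise and the rule reapplies unchanged. The cases flagged in the appendix preamble are then handled uniformly. For a \emph{dividing} rule (\rulen{ctx-ext}, \rulen{$\flat$-form}, \rulen{$\flat$-intro}, \rulen{$\sharp$-elim}), each premise lives in $\focusC \setminus (\Gamma,\Gamma')$ for some rule-determined focus $\focusC$; by the second fact, after the insertion this context either contains $w :_{\focusB} W$ (when $\focusB \leq \focusC$), in which case the induction hypothesis applied with weakening variable $w :_{\focusB} W$ gives the weakened premise, or it omits $w$, in which case the original premise derivation is already valid and no weakening is performed. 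For a \emph{promoting} rule (\rulen{$\sharp$-form}, \rulen{$\sharp$-intro}), the premise lives in $\focusC$ applied to the whole context; by the first fact the insertion becomes $w :_{\focusC\focusB} W$, and the induction hypothesis with weakening variable $w :_{\focusC\focusB} W$ supplies the weakened premise, its type being well-formed in the cleared promoted context by the interaction of promotion and clearing with the side hypothesis on $W$. In every case the output uses exactly one instance of the same rule, so the derivation size is unchanged.

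The step I expect to be the main obstacle is \rulen{$\flat$-elim}, which has four premises spread over four differently-divided contexts: two clearings of the context, $\focusC\focusA \setminus \Gamma$ and $\focusC \setminus \Gamma$ (renaming the rule's inner focus $\focusB$ to $\focusC$ to avoid clashing with the weakened variable's focus), and two extensions, by $x :_{\focusC} \flat_{\focusA} A$ and by $u :_{\focusC\focusA} A$. A single insertion of $w :_{\focusB} W$ into the conclusion context splits, via the two facts, into weakening the two extension premises by $w :_{\focusB} W$ and, for each clearing premise, a case split on whether $\focusB$ lies below the relevant divisor. Since the focuses form a meet semi-lattice with $\focusC\focusA$ the meet of $\focusC$ and $\focusA$, these splits are consistent — $\focusB \leq \focusC\focusA$ forces $\focusB \leq \focusC$ and $\focusB \leq \focusA$ — so the four outcomes fit together, and the definitional equations among context operations (e.g.\ $\focusA \setminus (\focusC \setminus \Gamma) \defeq (\focusC\focusA) \setminus \Gamma$) that were used to typecheck the original \rulen{$\flat$-elim} instance still hold after the insertion; hence \rulen{$\flat$-elim} reassembles to give the weakened conclusion. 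The \rulen{$\flat$-beta} equation is preserved for the same reasons, substitution being unaffected by a fresh variable. As with every other case, the size of the derivation tree is untouched.
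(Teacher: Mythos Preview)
Your proposal is correct and follows the same approach as the paper: induction on the derivation, with the two bookkeeping identities about how promotion and clearing interact with the inserted variable driving the division and promotion cases exactly as in the paper's proof. You go further than the paper in explicitly working through \rulen{$\flat$-elim} (the paper only exhibits \rulen{$\flat$-form} and \rulen{$\sharp$-form} as representatives), and your observation that the two clearing thresholds $\focusC$ and $\focusC\focusA$ are ordered so the case splits cohere is a nice touch, though strictly each premise can be handled independently.
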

\begin{proof}
  Induction on $\judge$.
  \begin{description}[style=unboxed, labelwidth=\linewidth, font=\normalfont\itshape, listparindent=0pt]
  \item[Case (division).]
    \begin{mathpar}
      \inferrule*[left=$\flat$-form]
      {\focusA \setminus (\Gamma, w :_{\focusB} W, \Gamma') \yields A \jtype}
      {\Gamma, w :_{\focusB} W, \Gamma' \yields {\flat_{\focusA}} A \jtype}
    \end{mathpar}
    There are two subcases:
    \begin{itemize}
    \item If $\focusB \leq \focusA$: then $\focusA \setminus (\Gamma, w
      :_{\focusB} W, \Gamma') \defeq (\focusA \setminus \Gamma), w :_{\focusB}
      W, (\focusA \setminus \Gamma')$, in which case we induct on the type $A$
      and reapply the rule.
    \item If $\focusB \not\leq \focusA$: then
      $\focusA \setminus (\Gamma, w :_{\focusB} W, \Gamma') \defeq (\focusA
      \setminus \Gamma), (\focusA \setminus \Gamma') \defeq \focusA \setminus
      (\Gamma, \Gamma')$, and so already
      $\focusA \setminus (\Gamma, w :_{\focusB} W, \Gamma') \yields A \jtype$ and
      we can reapply the rule.
    \end{itemize}
  \item[Case (promotion).]
    \begin{mathpar}
      \inferrule*[left=$\sharp$-form]
      {\focusA(\Gamma, w :_{\focusB} W, \Gamma') \yields A \jtype}
      {\Gamma, w :_{\focusB} W, \Gamma' \yields {\sharp_{\focusA}} A \jtype} \and
    \end{mathpar}
    By definition
    $\focusA(\Gamma, w :_{\focusB} W, \Gamma') \defeq \focusA\Gamma, w
    :_{\focusA\focusB} W, \focusA\Gamma'$, and so we induct on $A$ (now
    weakening with a variable of focus $\focusA\focusB$).
  \end{description}
\end{proof}

\begin{lem}\label{lem:pro-divide-wk}
  For any two focuses $\focusA$ and $\focusB$, the context
  $\focusB \setminus (\focusA \Gamma)$ is an iterated weakening of
  $\focusA(\focusB \setminus \Gamma)$.
\end{lem}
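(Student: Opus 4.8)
The plan is an induction on the context $\Gamma$, where at each stage we exhibit $\focusA(\focusB\setminus\Gamma)$ as a sub-telescope of $\focusB\setminus(\focusA\Gamma)$ --- the same variables, in the same order, carrying the same annotations --- so that the extra variables of $\focusB\setminus(\focusA\Gamma)$ can be spliced in one at a time using the admissible \rulen{wk} rule established just above. The one arithmetical fact about the meet semi-lattice $\type{Focus}$ that I will need is that $\focusA\focusC \leq \focusC$ for all $\focusA, \focusC$ (immediate from idempotence, since $(\focusA\focusC)\focusC \defeq \focusA\focusC$), whence by transitivity of $\leq$, if $\focusC \leq \focusB$ then also $\focusA\focusC \leq \focusB$. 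In words: a variable annotated $\focusC$ that survives $\focusB\setminus(-)$ \emph{unpromoted} also survives after being promoted to $\focusA\focusC$, though the converse can fail --- which is exactly why the inclusion is only an iterated weakening and not an equality.

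In the base case $\Gamma = \cdot$ both contexts are empty and there is nothing to weaken. For the inductive step, write $\Gamma = \Gamma_0, x :_{\focusC} A$, so $\focusA\Gamma = (\focusA\Gamma_0), x :_{\focusA\focusC} A$, and suppose inductively that $\focusB\setminus(\focusA\Gamma_0)$ is an iterated weakening of $\focusA(\focusB\setminus\Gamma_0)$. If $\focusA\focusC \not\leq \focusB$, then also $\focusC \not\leq \focusB$ by the contrapositive of the fact above, both divisions delete $x$, and we are done by the inductive hypothesis. If $\focusA\focusC \leq \focusB$, then $\focusB\setminus(\focusA\Gamma) = (\focusB\setminus(\focusA\Gamma_0)), x :_{\focusA\focusC} A$, and there are two subcases. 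When moreover $\focusC \leq \focusB$ we get $\focusA(\focusB\setminus\Gamma) = (\focusA(\focusB\setminus\Gamma_0)), x :_{\focusA\focusC} A$, so both contexts arise from the inductive-hypothesis pair by appending the \emph{identical} declaration $x :_{\focusA\focusC} A$; the same sequence of weakenings works, now carried out under the extra suffix variable $x$, each intermediate context staying well-formed by admissible weakening applied to the type $A$. When instead $\focusC \not\leq \focusB$ we have $\focusA(\focusB\setminus\Gamma) = \focusA(\focusB\setminus\Gamma_0)$, and $\focusB\setminus(\focusA\Gamma)$ is obtained from it by the inductive sequence of weakenings followed by one more, inserting $x :_{\focusA\focusC} A$ at the end; this last step is legitimate because $\focusB\setminus(\focusA\Gamma)$ is itself a well-formed context (apply \rulen{promote-ctx} and then \rulen{divide-ctx} to $\Gamma \ctx$), and its well-formedness is precisely the context-extension premise $(\focusA\focusC)\setminus(\focusB\setminus(\focusA\Gamma_0)) \yields A \jtype$ that the weakening step requires.

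The substance is thus concentrated in the order-theoretic fact and in the bookkeeping observation that the surviving copy of $x$ carries the same annotation $\focusA\focusC$ on both sides, so the two contexts genuinely agree on their common variables. The only thing that looks like it might cause trouble --- verifying that the spliced-in variables have well-formed types in the appropriate subcontexts --- evaporates once one notes that $\focusB\setminus(\focusA\Gamma)$ and $\focusA(\focusB\setminus\Gamma)$ are both \emph{already known} to be well-formed contexts, by applying \rulen{promote-ctx} and \rulen{divide-ctx} in the two orders, so every context-extension premise needed along the way is available for free and no reshuffling of the dividing and promoting operations past one another is required beyond the elementary fact $\focusA\focusC \leq \focusC$. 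It is \cref{lem:pro-divide-wk} in this form --- "$\focusA\setminus(-)$ and $\focusA(-)$ commute up to iterated weakening" --- that is then invoked in the remaining admissibility proofs whenever the division and promotion context operations must be interchanged.
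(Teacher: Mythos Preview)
Your core argument is correct and matches the paper's: the key order-theoretic fact is that $\focusC \leq \focusB$ implies $\focusA\focusC \leq \focusB$, so any variable surviving into $\focusA(\focusB\setminus\Gamma)$ also survives into $\focusB\setminus(\focusA\Gamma)$ with the same annotation $\focusA\focusC$. The paper's proof is just this variablewise check in three sentences, without the explicit induction scaffolding.

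There is one circularity you should be aware of. To justify well-formedness of the intermediate contexts, you invoke \rulen{promote-ctx} and \rulen{divide-ctx} --- but in the appendix these are proven \emph{after} the present lemma, and the proof of \rulen{promote-ctx} explicitly cites \cref{lem:pro-divide-wk}. So as written your argument is circular. The paper sidesteps this by treating the lemma as a purely combinatorial statement about sequences of declarations (one is a subsequence of the other, with matching annotations), deferring any well-formedness obligations to the call sites; at those call sites the target context is already known to be well-formed from the ambient hypotheses of the rule being proven. If you want to keep your more careful framing, you can instead establish the needed typing premise for each inserted variable directly in the induction: starting from the original $\focusC\setminus\Gamma_0 \yields A \jtype$, one gets to $(\focusA\focusC)\setminus(\focusB\setminus(\focusA\Gamma_0)) \yields A \jtype$ by the same variablewise-subsequence observation applied one level down, without appealing to the general admissible rules.
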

\begin{proof}
  This can be checked variablewise. Given a variable $x :_{\focusC} A$ in
  $\Gamma$, if it survives to $\focusA(\focusB \setminus \Gamma)$ as
  $x :_{\focusA\focusC} A$, then we must have $\focusC \leq
  \focusB$. Multiplying by $\focusA$, it follows
  that $\focusA \focusC \leq \focusA \focusB \leq \focusB$, and so
  $x :_{\focusA\focusC} A$ also occurs in $\focusB \setminus (\focusA \Gamma)$.
\end{proof}

\begin{lem}
  The following equations involving contexts and telescopes hold:
  \begin{align*}
    (\focusA \Gamma')[s/z] &\defeq \focusA (\Gamma'[s/z]) \\
    (\focusA \setminus \Gamma')[s/z] &\defeq \focusA \setminus (\Gamma'[s/z]) \\
  \end{align*}
\end{lem}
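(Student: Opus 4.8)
The plan is to prove both equations by induction on the length of the telescope $\Gamma'$. The key observation is that the substitution $[s/z]$ only rewrites the \emph{types} appearing in the entries of $\Gamma'$, while the operations $\focusA(-)$ and $\focusA \setminus (-)$ only affect the \emph{focus annotations} (and, for $\setminus$, whether an entry is kept at all); since these act on disjoint parts of the telescope data, they commute.

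For the base case $\Gamma' = \cdot$, every operation returns $\cdot$, so both equations hold definitionally. For the inductive step, write $\Gamma' = \Gamma'', x :_{\focusB} A$. For the first equation, both sides unfold to $\Xi, x :_{\focusA\focusB} A[s/z]$, where on the left $\Xi \defeq (\focusA\Gamma'')[s/z]$ and on the right $\Xi \defeq \focusA(\Gamma''[s/z])$; these agree by the induction hypothesis. For the second equation I would additionally case split on whether $\focusB \leq \focusA$ or $\focusB \not\leq \focusA$; crucially this condition depends only on $\focusB$ and not on $A$, hence is untouched by $[s/z]$, so both sides take the same branch. In the first branch both sides keep the entry $x :_{\focusB} A[s/z]$ and prepend $(\focusA \setminus \Gamma'')[s/z] \defeq \focusA \setminus (\Gamma''[s/z])$ by the induction hypothesis; in the second branch both sides drop the entry and again reduce to the induction hypothesis.

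I do not anticipate any real obstacle here: the proof is entirely mechanical once one notes the ``orthogonality'' of substitution (which acts on types) and the context operations (which act on annotations and membership). The one point that warrants care is exactly that the case-split in the definition of $\focusA \setminus (-)$ is governed by the focus $\focusB$ of an entry and not by its type, so it is stable under substitution; the analogous remark for telescope-valued substitution itself (that $[s/z]$ is defined entry-by-entry on the component types) is what makes the induction hypothesis applicable in the first place.
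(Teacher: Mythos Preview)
Your proposal is correct and is exactly the natural argument: the paper itself states this lemma without proof, treating both equations as immediate from the definitions, so your induction on the length of $\Gamma'$ with a case split on $\focusB \leq \focusA$ for the second equation is precisely the expected elaboration.
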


\begin{lem}[Substitution]
  \begin{mathpar}
    \inferrule*[left= subst, fraction={-{\,-\,}-}]
    {\focusB \setminus \Gamma \yields s : S \and \Gamma, z :_{\focusB} S, \Gamma' \yields \judge}
    {\Gamma, \Gamma'[s/z] \yields \judge[s/z]} \\
  \end{mathpar}
\end{lem}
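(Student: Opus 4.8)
The plan is to prove admissibility of \rulen{subst} by induction on the derivation of the second premise $\Gamma, z :_{\focusB} S, \Gamma' \yields \judge$, carried out as one strand of the same mutual induction as \rulen{wk} and \rulen{promote}; since substituting into a judgement never increases the height of its derivation, the induction is well-founded. The routine cases are the ordinary type formers and the structural rules with no context operation: there one simply commutes $[s/z]$ past the rule and reapplies it, using the identities $(\focusA \Gamma')[s/z] \defeq \focusA (\Gamma'[s/z])$ and $(\focusA \setminus \Gamma')[s/z] \defeq \focusA \setminus (\Gamma'[s/z])$ from the previous lemma to see that the conclusion is again of the prescribed form. In the \rulen{var} case, if the variable is $z$ we output $s$, weakened from $\focusB \setminus \Gamma$ up to $\Gamma, \Gamma'[s/z]$; any other variable is untouched by the substitution and the rule reapplies verbatim.

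The two families of nontrivial cases are exactly those flagged above: the rules whose premises mention a division $\focusA \setminus (-)$ (\rulen{ctx-ext}, \rulen{$\flat$-form/intro/elim}, \rulen{$\sharp$-elim}) and the rules whose premises mention a promotion $\focusA(-)$ (\rulen{$\sharp$-form/intro}). For a division rule, a premise lives in $\focusA \setminus (\Gamma, z :_{\focusB} S, \Gamma')$, and we distinguish two subcases. If $\focusB \not\leq \focusA$, then $z$ is deleted and $\focusA \setminus (\Gamma, z :_{\focusB} S, \Gamma') \defeq \focusA \setminus (\Gamma, \Gamma')$; moreover $z$ cannot occur in the surviving part of $\Gamma'$ (a variable surviving $\focusA \setminus (-)$ is $\focusA$-crisp, so its type depends only on $\focusA$-crisp variables, among which $z$ is not), so the substitution acts trivially here and we just reapply the rule. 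If $\focusB \leq \focusA$, then $z$ survives with its focus unchanged, and the corresponding substitution instance of the premise calls for a term of type $S$ in $\focusB \setminus (\focusA \setminus \Gamma) \defeq (\focusA\focusB) \setminus \Gamma \defeq \focusB \setminus \Gamma$ --- precisely the context where $s$ already lives --- so we invoke the inductive hypothesis with the very same $s$ and reassemble. The \rulen{$\flat$-elim} case combines several such premises (with $z$ appearing variously at focus $\focusB$, at focus $\focusB\focusA$, or after a $\focusB\focusA \setminus (-)$), but each is handled by this same dichotomy; it is bookkeeping rather than a new idea.

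The genuinely delicate case is promotion, say \rulen{$\sharp$-form} with premise $\focusA(\Gamma, z :_{\focusB} S, \Gamma') \defeq \focusA\Gamma,\, z :_{\focusA\focusB} S,\, \focusA\Gamma' \yields A \jtype$. Here the bound variable has been re-annotated to focus $\focusA\focusB$, so to apply the inductive hypothesis we need a term of type $S$ in the context $(\focusA\focusB) \setminus (\focusA\Gamma)$, which carries fewer variables than the $\focusB \setminus \Gamma$ in which $s$ is given. This is resolved by \cref{lem:pro-divide-wk}: applying \rulen{promote} to $\focusB \setminus \Gamma \yields s : S$ yields $\focusA(\focusB \setminus \Gamma) \yields s : S$, and the variable-wise argument behind \cref{lem:pro-divide-wk} shows that $(\focusA\focusB) \setminus (\focusA\Gamma)$ is an iterated weakening of $\focusA(\focusB \setminus \Gamma)$ (the required inclusion $\focusC \leq \focusB \Rightarrow \focusA\focusC \leq \focusA\focusB$ is exactly the intermediate step in that proof); weakening therefore gives $(\focusA\focusB) \setminus (\focusA\Gamma) \yields s : S$. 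The inductive hypothesis now produces $\focusA\Gamma,\, \focusA(\Gamma'[s/z]) \yields A[s/z] \jtype$, i.e.\ $\focusA(\Gamma, \Gamma'[s/z]) \yields A[s/z] \jtype$, and reapplying \rulen{$\sharp$-form} gives $\Gamma, \Gamma'[s/z] \yields \sharp_{\focusA}(A[s/z]) \jtype$, which is the desired $(\sharp_{\focusA} A)[s/z]$; the \rulen{$\sharp$-intro} case is identical with a term in place of the type.

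I expect this transport of $s$ through a simultaneous promotion-and-division --- and its analogue in the several premises of \rulen{$\flat$-elim} --- to be the only real obstacle: everything else amounts to carefully tracking how the annotation on $z$ and the ambient context of $s$ transform under the operations $\focusA(-)$ and $\focusA \setminus (-)$, which the algebraic identities on contexts and \cref{lem:pro-divide-wk} already make routine.
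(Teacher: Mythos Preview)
Your proposal is correct and follows essentially the same strategy as the paper: induction on the second premise, the variable case handled by \rulen{divide-wk} followed by weakening, the division cases split on whether $\focusB \leq \focusA$, and the promotion case resolved by first applying \rulen{promote} to $s$ and then invoking \cref{lem:pro-divide-wk} to weaken into $(\focusA\focusB)\setminus(\focusA\Gamma)$ (which equals the paper's $\focusB\setminus(\focusA\Gamma)$). You are slightly more explicit than the paper about the mutual-induction bookkeeping and about why $z$ cannot occur in the surviving part of $\focusA\setminus\Gamma'$ when $\focusB\not\leq\focusA$, but the argument is the same.
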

\begin{proof}
  Induction on $\judge$.

  \begin{description}[style=unboxed, labelwidth=\linewidth, font=\normalfont\itshape, listparindent=0pt]
  \item[Case (variable).]
    Three subcases as usual, for $x \in \Gamma$, $x \defeq z$ and $x \in
    \Gamma'$. The interesting one is $x \defeq z$:
    \begin{mathpar}
      \inferrule*[left=var]
      {~}
      {\Gamma, z :_{\focusB} S, \Gamma' \yields s : S}
    \end{mathpar}
    Applying \rulen{divide-wk} to $\focusB \setminus \Gamma \yields s : S$ gives
    $\Gamma \yields s : S$, which can be further weakened to
    $\Gamma, \Gamma' \yields s : S$.
  \item[Case (division).]
    \begin{mathpar}
      \inferrule*[left=$\flat$-form]
      {\focusA \setminus (\Gamma, z :_{\focusB} S, \Gamma') \yields A \jtype}
      {\Gamma, s :_{\focusB} S, \Gamma' \yields {\flat_{\focusA}} A \jtype}
    \end{mathpar}
    There are two subcases:
    \begin{itemize}
    \item If $\focusB \leq \focusA$: then
      $\focusA \setminus (\Gamma, z :_{\focusB} S, \Gamma') \defeq (\focusA
      \setminus \Gamma), z :_{\focusB} S, (\focusA \setminus \Gamma')$, in which
      case we induct, getting
      $(\focusA \setminus \Gamma), (\focusA \setminus \Gamma')[s/z] \yields
      A[s/z] \jtype$. This context is equal to
      $\focusA \setminus (\Gamma, \Gamma'[s/z]) \ctx$, so we can reapply the
      rule.
    \item If $\focusB \not\leq \focusA$: then
      $\focusA \setminus (\Gamma, z :_{\focusB} S, \Gamma') \defeq (\focusA
      \setminus \Gamma), (\focusA \setminus \Gamma') \defeq \focusA \setminus
      (\Gamma, \Gamma')$, and so $z$ does not occur in $A$, and
      $A \defeq A[s/z]$, in which case we may reapply the rule.
    \end{itemize}
  \item[Case (promotion).]
    \begin{mathpar}
      \inferrule*[left=$\sharp$-form]
      {\focusA(\Gamma, z :_{\focusB} S, \Gamma') \yields A \jtype}
      {\Gamma, z :_{\focusB} S, \Gamma' \yields {\sharp_{\focusA}} A \jtype} \and
    \end{mathpar}
    By definition
    $\focusA(\Gamma, s :_{\focusB} S, \Gamma') \defeq \focusA \Gamma, s
    :_{\focusA\focusB} S, \focusA\Gamma'$. Applying \rulen{promote} to
    $\focusB \setminus \Gamma \yields s : S$ yields
    $\focusA(\focusB \setminus \Gamma) \yields s : S$. By
    \cref{lem:pro-divide-wk}, this can be weakened to
    $\focusB(\focusA \setminus \Gamma) \yields s : S$, whose context is equal to
    $(\focusA\focusB) \setminus (\focusA \Gamma)$, which is of the correct shape
    to be substituted into
    $\focusA \Gamma, z :_{\focusA\focusB} S, \focusA\Gamma' \yields A
    \jtype$. Substitution gives
    $\focusA \Gamma, (\focusA\Gamma')[s/z] \yields A[s/z] \jtype$, and this
    context is equal to $\focusA (\Gamma, \Gamma'[s/z]) \ctx$, so we can
    reapply the rule.
  \end{description}
\end{proof}

\begin{lem}[Promote]
  \begin{mathpar}
    \inferrule*[left=promote-ctx, fraction={-{\,-\,}-}]
    {\Gamma \ctx}{\focusB\Gamma \ctx} \and
    \inferrule*[left=promote, fraction={-{\,-\,}-}]
    {\Gamma \yields \judge}{\focusB \Gamma \yields \judge} \\
  \end{mathpar}
\end{lem}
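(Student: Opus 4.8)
The plan is to prove both halves of the lemma at once, by a single mutual induction on the derivations of $\Gamma \ctx$ and $\Gamma \yields \judge$. As with \rulen{wk} and \rulen{subst}, it pays to prove the slightly more general form that carries an untouched telescope tail --- $\Gamma, \Gamma' \yields \judge$ entails $\focusB\Gamma, \Gamma' \yields \judge$, and likewise for contexts --- which specialises to the displayed statement at $\Gamma' \defeq \cdot$; keeping the tail fixed is exactly what lets the ordinary binding type formers go through, since the fresh $\top$-variable they introduce is appended to $\Gamma'$ rather than promoted. The induction is on derivation size, and although it invokes the already-admissible \rulen{wk} to reconcile contexts, weakening leaves the derivation size unchanged, so there is no circularity. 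The one non-bookkeeping ingredient is \cref{lem:pro-divide-wk} together with a mild strengthening, proved by the same variable-wise check: for any focuses $\focusA$ and $\focusB$, the context $(\focusB\focusA) \setminus (\focusB\Gamma)$ is an iterated weakening of $\focusB(\focusA \setminus \Gamma)$ --- indeed a variable of $\Gamma$ surviving the division $\focusA \setminus (-)$ has focus $\leq \focusA$, hence focus $\leq \focusB\focusA$ after multiplying by $\focusB$ (using commutativity and idempotence of the monoid of focuses), so it also survives $(\focusB\focusA) \setminus (\focusB(-))$.

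With the tail in place, the structural rules and every ordinary type former are immediate: apply the inductive hypothesis to each premise and reapply the same rule over $\focusB\Gamma$ (which is a context by the \rulen{promote-ctx} half of the induction), using that promotion distributes over context extension. The division rules are where \cref{lem:pro-divide-wk} does its work. For \rulen{ctx-ext}, \rulen{$\flat$-form}, \rulen{$\flat$-intro} and \rulen{$\sharp$-elim}, the key premise sits over a divided context like $\focusA \setminus (\Gamma, \Gamma')$; promoting its $\Gamma$-part gives the judgement over $\focusB(\focusA \setminus \Gamma),\, \focusA \setminus \Gamma'$, and weakening along \cref{lem:pro-divide-wk} (for the cases needing $\focusA \setminus (\focusB\Gamma)$) or its strengthening (for \rulen{ctx-ext}, which needs $(\focusB\focusA) \setminus (\focusB\Gamma)$) puts us in precisely the context the rule demands. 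The promotion rules \rulen{$\sharp$-form} and \rulen{$\sharp$-intro} are easier: their premise lives over $\focusA\Gamma$, and the equation $\focusA(\focusB\Gamma) \defeq (\focusA\focusB)\Gamma = \focusB(\focusA\Gamma)$ identifies the context they ask for with $\focusB$ applied to the premise's context, so the inductive hypothesis applies on the nose.

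I expect the main obstacle to be \rulen{$\flat$-elim} (and hence \rulen{$\flat$-beta}), on account of its crisp-induction shape with two focuses already in play --- one of which, in the displayed rule, is even written $\focusB$ and so collides with the promotion focus; call it $\focusC$ instead, reserving $\focusA$ for the modality focus. Two of its four premises live over the iterated divisions $\focusC\focusA \setminus \Gamma$ and $\focusC \setminus \Gamma$. Promoting the conclusion by $\focusB$ sends $\Gamma$ to $\focusB\Gamma$ and the bound variable $x :_{\focusC} \flat_{\focusA} A$ to $x :_{\focusB\focusC} \flat_{\focusA} A$, so one reapplies \rulen{$\flat$-elim} with bound-variable focus $\focusB\focusC$; its obligations over $(\focusB\focusC)\focusA \setminus (\focusB\Gamma)$ and $\focusB\focusC \setminus (\focusB\Gamma)$ are discharged by weakening the inductive hypothesis's outputs over $\focusB(\focusC\focusA \setminus \Gamma)$ and $\focusB(\focusC \setminus \Gamma)$ through the strengthened \cref{lem:pro-divide-wk} (with single focus $\focusC\focusA$, respectively $\focusC$), while the motive $C$ and the branch $N$ are handled by the inductive hypothesis directly, their bound focuses $\focusC$ and $\focusC\focusA$ passing to $\focusB\focusC$ and $(\focusB\focusC)\focusA$ exactly as needed. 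So the real difficulty is only the bookkeeping --- confirming that the commutative-idempotent identities relating $\focusA$, $\focusB$, $\focusC$ make each context produced by the inductive hypothesis an iterated weakening of the one the reapplied modal rule requires; there is no conceptual obstruction, and \rulen{$\flat$-beta} then follows since both sides of its definitional equation are built from the very same promoted pieces.
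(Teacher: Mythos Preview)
Your proof follows the paper's approach --- the same use of \cref{lem:pro-divide-wk} for the division rules and the equation $\focusA(\focusB\Gamma) \defeq \focusB(\focusA\Gamma)$ for the promotion rules --- but generalises it by carrying an unpromoted telescope tail $\Gamma'$. This refinement is genuinely useful: the paper's sketch treats only the modal cases and does not spell out how to handle the ordinary binding type formers, and you are right that without the tail the inductive hypothesis would promote a freshly-bound $\top$-variable along with everything else, leaving one stuck. Your ``strengthening'' of \cref{lem:pro-divide-wk} is also exactly what the paper invokes (while loosely citing the original lemma) in its \rulen{promote-ctx} case.

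One small wrinkle in your handling of $\flat$-elim: it is cleaner to reapply the rule with the \emph{same} bound-variable parameter $\focusC$ rather than $\focusB\focusC$. With the unchanged parameter, the motive and branch premises are discharged directly by the inductive hypothesis (the bound variable sits in the tail, so its focus stays $\focusC$, respectively $\focusC\focusA$), and the divided premises over $(\focusC\focusA) \setminus (\focusB\Gamma, \Gamma')$ and $\focusC \setminus (\focusB\Gamma, \Gamma')$ need only the unstrengthened \cref{lem:pro-divide-wk}. Your route via $\focusB\focusC$ is not wrong, but when the tail $\Gamma'$ is non-empty the claim that the motive is ``handled by the inductive hypothesis directly'' is not quite accurate: the IH with the bound variable in the tail produces $x :_{\focusC}$, and a further weaken-then-substitute step is needed to crispen it to $x :_{\focusB\focusC}$.
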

\begin{proof}
  First, \rulen{promote-ctx} is by induction on the length of the
  context. Consider a context $\Gamma, x :_{\focusA} A \ctx$, so that
  $\focusA \setminus \Gamma \yields A \jtype$. Applying \rulen{promote} to $A$
  gives $\focusB(\focusA \setminus \Gamma) \yields A \jtype$, which can be
  weakened to $(\focusB\focusA) \setminus (\focusB \Gamma) \yields A \jtype$ by
  \cref{lem:pro-divide-wk}, letting us form the context
  $\focusB \Gamma, x :_{\focusB\focusA} A \ctx$.

  \begin{description}[style=unboxed, labelwidth=\linewidth, font=\normalfont\itshape, listparindent=0pt]
  \item[Case (variable).]  The variable rule is immediate, because modifying the
    annotation on a variable does not change whether it is usable.
  \item[Case (division).]
    \begin{mathpar}
      \inferrule*[left=$\flat$-form]
      {\focusA \setminus \Gamma \yields A \jtype}
      {\Gamma \yields {\flat_{\focusA}} A \jtype}
    \end{mathpar}
    Inductively $\focusB(\focusA \setminus \Gamma) \yields A \jtype$, which can
    be weakened to $\focusA \setminus (\focusB\Gamma) \yields A \jtype$, and we
    reapply the rule.
  \item[Case (promotion).]
    \begin{mathpar}
      \inferrule*[left=$\sharp$-form]
      {\focusA \Gamma \yields A \jtype}
      {\Gamma \yields {\sharp_{\focusA}} A \jtype}
    \end{mathpar}
    Inductively $\focusB(\focusA \Gamma) \yields A \jtype$, and $\focusB(\focusA
    \Gamma) \defeq \focusA(\focusB \Gamma)$, so we may reapply the rule.
  \end{description}
\end{proof}



\begin{lem}[Division]
  \begin{mathpar}
  \inferrule*[left=divide-ctx, fraction={-{\,-\,}-}]{\Gamma \ctx}{\focusB
    \setminus \Gamma \ctx} \and

  \inferrule*[left=divide-wk, fraction={-{\,-\,}-}]{\Gamma \ctx \and \focusB \setminus \Gamma
    \yields \judge}{\Gamma \yields \judge} \and
  \end{mathpar}
\end{lem}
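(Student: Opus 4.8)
The plan is to prove both admissible rules by straightforward structural induction, using the already-established Weakening lemma and the equation $\focusA \setminus (\focusB \setminus \Gamma) \defeq (\focusB\focusA) \setminus \Gamma$.

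For \rulen{divide-ctx} I would induct on the length of $\Gamma$. The empty case is immediate, as $\focusB \setminus (\cdot) \defeq \cdot$. For $\Gamma, x :_{\focusA} A \ctx$ we have $\focusA \setminus \Gamma \yields A \jtype$, and we split according to the two clauses defining $\focusB \setminus (-)$. If $\focusA \leq \focusB$, then $\focusB \setminus (\Gamma, x :_{\focusA} A) \defeq (\focusB \setminus \Gamma), x :_{\focusA} A$; the inductive hypothesis gives $\focusB \setminus \Gamma \ctx$, and since $\focusA \setminus (\focusB \setminus \Gamma) \defeq (\focusB\focusA) \setminus \Gamma \defeq \focusA \setminus \Gamma$ (the last step because $\focusB\focusA \defeq \focusA$) we already have $\focusA \setminus (\focusB \setminus \Gamma) \yields A \jtype$, so \rulen{ctx-ext} applies. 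If $\focusA \not\leq \focusB$, then $\focusB \setminus (\Gamma, x :_{\focusA} A) \defeq \focusB \setminus \Gamma$, which is a context by the inductive hypothesis.

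For \rulen{divide-wk} the plan is to exhibit $\Gamma$ as an iterated weakening of $\focusB \setminus \Gamma$. The context $\focusB \setminus \Gamma$ is obtained from $\Gamma$ by deleting precisely the non-$\focusB$-crisp variables; I reinsert these, in their original positions, one at a time from left to right. The observation that makes this go through is that a $\focusB$-crisp variable declaration can only mention variables that are crisp for its own (necessarily $\leq \focusB$) focus, hence $\focusB$-crisp variables; consequently, at the moment a deleted variable $x :_{\focusA} A$ is reinserted, the context assembled so far has, as the prefix preceding $x$'s slot, exactly the prefix of $\Gamma$ that occurs before $x$. Thus each reinsertion is a legitimate application of the Weakening lemma, whose well-formedness side condition — that $A$ is a valid type over that prefix — is exactly what inversion on the matching \rulen{ctx-ext} instance in the derivation of $\Gamma \ctx$ provides. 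Applying Weakening once per reinserted variable converts a derivation of $\focusB \setminus \Gamma \yields \judge$ into one of $\Gamma \yields \judge$; this is the sole use of the hypothesis $\Gamma \ctx$.

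The only point requiring genuine care is the bookkeeping just described for \rulen{divide-wk}: one must check that each reinserted variable lands at a position over which its type is already known to be well-formed, and this rests on $\focusB$-crispness being closed downward along the dependency order. The remaining manipulations are mechanical unfoldings of the $\focusA(-)$ and $\focusA \setminus (-)$ clauses, entirely parallel to the earlier admissibility lemmas.
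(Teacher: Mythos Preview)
Your argument for \rulen{divide-ctx} is the same as the paper's. For \rulen{divide-wk} you take a genuinely different route: the paper proceeds by induction on the derivation of $\judge$, handling the variable, division, and promotion rule shapes in turn (the promotion case appeals to \cref{lem:pro-divide-wk} and to the fact that Weakening does not increase derivation size, so the inductive hypothesis applies to the weakened subderivation). You instead reduce \rulen{divide-wk} entirely to iterated applications of Weakening, by observing that $\Gamma$ is obtained from $\focusB\setminus\Gamma$ by reinserting the deleted variables left to right. Your key observation --- that a $\focusB$-crisp variable's type can only mention $\focusB$-crisp variables --- is exactly what makes each intermediate context $(x_1,\ldots,x_k),\,\focusB\setminus(x_{k+1},\ldots,x_n)$ well-formed: applying $\focusC_j\setminus(-)$ to the prefix before a surviving $x_j$ yields $\focusC_j\setminus(x_1,\ldots,x_{j-1})$ on the nose, since $\focusC_j\leq\focusB$ forces $\focusC_j\focusB=\focusC_j$. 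You state this for the reinserted variable's type but should note it equally covers the tail, so that Weakening's implicit presupposition on the output context is met at every step. With that said, your approach is correct and pleasantly conceptual (no per-rule case analysis), while the paper's approach is more uniform with the surrounding admissibility lemmas and makes explicit how $\focusB\setminus(-)$ interacts with each connective.
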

\begin{proof}
  First \rulen{divide-ctx}. Consider a context $\Gamma, x :_{\focusA} A \ctx$, so that
  $\focusA \setminus \Gamma \yields A \jtype$. There are two cases:
  \begin{itemize}
  \item If $\focusA \leq \focusB$: Then \[\focusA \setminus \Gamma \defeq
    (\focusA \focusB) \setminus \Gamma \defeq \focusA \setminus (\focusB
    \setminus \Gamma), \] and so $\focusA \setminus (\focusB
    \setminus \Gamma) \yields A \jtype$ is of the right shape to form $\focusB
    \setminus \Gamma, x :_{\focusA} A \ctx$.
  \item If $\focusA \not\leq \focusB$: Then $\focusB \setminus (\Gamma, x
    :_{\focusA} A) \defeq \focusB \setminus \Gamma$ is well-formed by induction.
  \end{itemize}

  Now \rulen{divide-wk}. On terms:
  \begin{description}[style=unboxed, labelwidth=\linewidth, font=\normalfont\itshape, listparindent=0pt]
  \item[Case (variable).] If $x :_{\focusA} A$ is in context
    $\focusB \setminus \Gamma$, then it must also be in $\Gamma$ and so we may
    reuse the variable rule.
  \item[Case (division).]
    \begin{mathpar}
      \inferrule*[left=$\flat$-form]
      {\focusA \setminus (\focusB \setminus \Gamma) \yields A \jtype}
      {\focusB \setminus \Gamma \yields {\flat_{\focusA}} A \jtype}
    \end{mathpar}
    We know
    $\focusA \setminus (\focusB \setminus \Gamma) \defeq \focusB \setminus
    (\focusA \setminus \Gamma)$, and so inductively
    $\focusA \setminus \Gamma \yields A \jtype$, and we can reapply the rule.
  \item[Case (promotion).]
    \begin{mathpar}
      \inferrule*[left=$\sharp$-form]
      {\focusA (\focusB \setminus \Gamma) \yields A \jtype}
      {\focusB \setminus \Gamma \yields {\sharp_{\focusA}} A \jtype}
    \end{mathpar}
    $\focusA (\focusB \setminus \Gamma) \yields A \jtype$ may be weakened to
    $\focusB \setminus (\focusA \Gamma) \yields A \jtype$ (without increasing
    the size of the derivation). Inductively $\focusA \Gamma \yields A \jtype$,
    and we can reapply the rule.
  \end{description}
\end{proof}

\end{document}
